\tikzset{node distance=2cm, auto}
\newtheorem{proposition}{\sf PROPOSITION}
\newcommand{\CF}{{\cal F}}
\newcommand{\CN}{{\cal N}}
\newcommand{\CO}{{\cal O}}
\newcommand{\CP}{{\cal P}}
\newcommand{\CX}{{\cal X}}
\def\BN{{\mathbb N}}
\def\BZ{{\mathbb Z}}
\def\BC{{\mathbb C}}
\def\BP{{\mathbb P}}
\def\BS{{\mathbb S}}
\def\BQ{{\mathbb Q}}
\newcommand{\be}{\begin{equation}}
\newcommand{\ee}{\end{equation}}
\newcommand{\ba}{\begin{aligned}}
\newcommand{\ea}{\end{aligned}}
\newcommand{\bea}{\begin{eqnarray}}
\newcommand{\eea}{\end{eqnarray}}
\newcommand{\bean}{\begin{eqnarray*}}
\newcommand{\eean}{\end{eqnarray*}}
\newcommand{\p}{\partial}
\def\r{\right\rangle}
\def\1{\mathbf{1}}
\def\0{|\1\r}
\newcommand{\rme}{{\rm e}}
\newcommand{\rmi}{{\rm i}}
\newcommand{\rmd}{{\rm d}}
\def\XXint#1#2#3{{\setbox0=\hbox{$#1{#2#3}{\int}$}
     \vcenter{\hbox{$#2#3$}}\kern-.5\wd0}}
\DeclarePairedDelimiter\floor{\lfloor}{\rfloor}
\newcommand{\tPTwo}{\text{\tiny $\BP^2$}}
\newcommand{\tPOne}{\text{\tiny $\BP^1$}}
\newcommand{\tconi}{\text{\tiny coni}}
\newcommand{\tABJM}{\text{\tiny ABJM}}
\newcommand{\tQuintic}{\text{\tiny quint}}
\newcommand{\tLC}[1]{\text{\tiny $X_{#1}$}}
\newcommand{\tH}{\text{\tiny H}}
\newcommand{\sh}{q} %Shift on Kahler leading degree
\newcommand{\polyname}{\CP} %Letter for polynomials in leading large-order
\DeclareMathOperator{\GV}{\mathcal{GV}}
\DeclareMathOperator{\GW}{\mathcal{GW}}
\newcommand{\TGW}{\widetilde{\GW}}
\DeclareMathOperator{\MB}{\mathcal{B}}
\newcommand{\HB}{\widehat{\MB}}
\DeclareMathOperator{\mb}{b}
\DeclareMathOperator{\MA}{\mathcal{A}}
\definecolor{pert}{cmyk}{0.5, 0.39, 0., 0.3}
\definecolor{1inst}{cmyk}{0.5, 0., 1., 0.4}
\definecolor{2inst}{cmyk}{0., 0.255, 0.75, 0.1}
\definecolor{3inst}{cmyk}{0., 0.6643, 0.73, 0.27}
\newsavebox\myboxA
\newsavebox\myboxB
\newlength\mylenA
\newcommand*\widebar[2][0.75]{%
    \sbox{\myboxA}{$\m@th#2$}%
    \setbox\myboxB\null% Phantom box
    \ht\myboxB=\ht\myboxA%
    \dp\myboxB=\dp\myboxA%
    \wd\myboxB=#1\wd\myboxA% Scale phantom
    \sbox\myboxB{$\m@th\overline{\copy\myboxB}$}%  Overlined phantom
    \setlength\mylenA{\the\wd\myboxA}%   calc width diff
    \addtolength\mylenA{-\the\wd\myboxB}%
    \ifdim\wd\myboxB<\wd\myboxA%
       \rlap{\hskip 0.8\mylenA\usebox\myboxB}{\usebox\myboxA}%
    \else
        \hskip -0.5\mylenA\rlap{\usebox\myboxA}{\hskip 0.5\mylenA\usebox\myboxB}%
    \fi}
\newdimen\tableauside\tableauside=1.0ex
\newdimen\tableaurule\tableaurule=0.4pt
\newdimen\tableaustep
\def\phantomhrule#1{\hbox{\vbox to0pt{\hrule height\tableaurule width#1\vss}}}
\def\phantomvrule#1{\vbox{\hbox to0pt{\vrule width\tableaurule height#1\hss}}}
\def\sqr{\vbox{%
  \phantomhrule\tableaustep
  \hbox{\phantomvrule\tableaustep\kern\tableaustep\phantomvrule\tableaustep}%
  \hbox{\vbox{\phantomhrule\tableauside}\kern-\tableaurule}}}
\def\squares#1{\hbox{\count0=#1\noindent\loop\sqr
  \advance\count0 by-1 \ifnum\count0>0\repeat}}
\def\tableau#1{\vcenter{\offinterlineskip
  \tableaustep=\tableauside\advance\tableaustep by-\tableaurule
  \kern\normallineskip\hbox
    {\kern\normallineskip\vbox
      {\gettableau#1 0 }%
     \kern\normallineskip\kern\tableaurule}%
  \kern\normallineskip\kern\tableaurule}}
\def\gettableau#1{\ifnum#1=0\let\next=\null\else
\squares{#1}\let\next=\gettableau\fi\next}
\preprint{
{\small{\textsf{DESY 16-089}}}
}
\title{On Asymptotics and Resurgent Structures of Enumerative Gromov--Witten Invariants}
\author[a]{Ricardo~Couso-Santamar\'\i a,}
\affiliation[a]{CAMGSD, Departamento de Matem\'atica, Instituto Superior T\'ecnico, Universidade de Lisboa,\\ Av. Rovisco Pais 1, 1049-001 Lisboa, Portugal\\}
\emailAdd{santamaria@math.tecnico.ulisboa.pt}
\author[a,b]{Ricardo~Schiappa,}
\affiliation[b]{D\'epartement de Physique Th\'eorique \& Section de Math\'ematiques,\\ Universit\'e de Gen\`eve, Gen\`eve, CH-1211 Switzerland\\}
\emailAdd{schiappa@math.tecnico.ulisboa.pt}
\author[a,c]{Ricardo~Vaz\,}
\affiliation[c]{DESY Theory Group, DESY Hamburg,\\
Notkestrasse 85, D-22603 Hamburg, Germany\\}
\emailAdd{ricardo.carmo.vaz@tecnico.ulisboa.pt}
\abstract{Making use of large-order techniques in asymptotics and resurgent analysis, this work addresses the growth of enumerative Gromov--Witten invariants---in their dependence upon genus and degree of the embedded curve---for several different threefold Calabi--Yau varieties. In particular, while the leading asymptotics of these invariants at large genus or at large degree is exponential, at combined large genus \textit{and} degree it turns out to be factorial. This factorial growth has a resurgent nature, originating via mirror symmetry from the resurgent-transseries description of the B-model free energy. This implies the existence of nonperturbative sectors controlling the asymptotics of the Gromov--Witten invariants, which could themselves have an enumerative-geometry interpretation. The examples addressed include: the resolved conifold; the local surfaces local $\BP^2$ and local $\BP^1 \times \BP^1$; the local curves and Hurwitz theory; and the compact quintic. All examples suggest very rich interplays between resurgent asymptotics and enumerative problems in algebraic geometry.
}
\keywords{Asymptotics, Resurgent Analysis, Enumerative Geometry, Algebraic Geometry, Topological Strings, Gromov--Witten Invariants, Gopakumar--Vafa Invariants}
\begin{document}

%%%%%%%%%%%%%%%%%%%%%%%%%%%%%%%%%%%%%%%%%%%%%%%%%%%%%%%%%%%%%%%%%
%%%%%%%%%%%%%%%%%%%%%%%%%%%%%%%%%%%%%%%%%%%%%%%%%%%%%%%%%%%%%%%%%
\maketitle
%%%%%%%%%%%%%%%%%%%%%%%%%%%%%%%%%%%%%%%%%%%%%%%%%%%%%%%%%%%%%%%%%
%%%%%%%%%%%%%%%%%%%%%%%%%%%%%%%%%%%%%%%%%%%%%%%%%%%%%%%%%%%%%%%%%

\vfill

\eject

\allowdisplaybreaks

%%%%%%%%%%%%%%%%%%%%%%%%%%%%%%%%%%%%%%%%%%%%%%%%%%%%%%%%%%%%%%%%%
%%%%%%%%%%%%%%%%%%%%%%%%%%%%%%%%%%%%%%%%%%%%%%%%%%%%%%%%%%%%%%%%%
\section{Introduction}\label{sec:intro}
%%%%%%%%%%%%%%%%%%%%%%%%%%%%%%%%%%%%%%%%%%%%%%%%%%%%%%%%%%%%%%%%%
%%%%%%%%%%%%%%%%%%%%%%%%%%%%%%%%%%%%%%%%%%%%%%%%%%%%%%%%%%%%%%%%%

Geometrical-counting problems, albeit many times rather natural and simple to formulate, may lead to remarkably rich and interesting structures. Among these, enumerative invariants play an important classification role within algebraic geometry. For example, counting pseudo-holomorphic curves inside symplectic manifolds gives rise to the famous Gromov--Witten (GW) invariants. These are invariants associated to the symplectic manifold $\CX$, which are rational numbers (implying a ``virtual'' counting) depending on both genus, $g$, and degree, $d$, of the embedded curve. We shall denote them by $N_{g,d}$. The computation of GW invariants is generically hard, becoming simpler when the manifold is Calabi--Yau (CY) where they are generated by the A-model topological-string free energy. This is a long story which goes back to the discovery of mirror symmetry; see, \textit{e.g.}, \cite{lvw89, gp90, cls90, cdgp91, w93, bcov93, k94, gv95, syz96, hv00} for early references, and, \textit{e.g}, \cite{w91, hkt94, g97, m05, a12} for reviews.

Consider the A-model on a CY $\CX$, in the large-radius phase (valid when the K\"ahler parameter $t$ is large). The A-model free energy is then given by an asymptotic, genus expansion
\be
\label{gstring-expansion}
F (\CX) \simeq \sum_{g=0}^{+\infty} g_{\text{s}}^{2g-2} F_g (t),
\ee
\noindent
where the genus-$g$ contributions to the free energy may be decomposed as \cite{bcov93}
\be
\label{alphaprime-expansion}
F_g (t) = \sum_{d > 0} N_{g,d}\, Q^d.
\ee
\noindent
The sum over degree $d$ corresponds to a sum over topological sectors as classified by worldsheet instantons (where $Q = \rme^{-t}$ in units where $\alpha'=2\pi$). While this explicitly shows how the topological-string free energy is a generating function for the genus $g$, degree $d$, enumerative GW invariants of $\CX$, $N_{g,d}$, the two expansions above have rather different properties: while the fixed-genus \eqref{alphaprime-expansion} is a \textit{convergent} series\footnote{Convergence was proved at planar level in \cite{knn77, th82}, although there is no general mathematical proof at arbitrary genus. As we shall see later on, our examples in the present paper also strongly support this convergence property.}, with a non-zero radius of convergence, \eqref{gstring-expansion} is instead a \textit{divergent} asymptotic series, with zero radius of convergence; see, \textit{e.g.}, \cite{gp88}. The reason for this is the factorial growth of the genus-$g$ contributions with genus, as $F_g \sim (2g)!$.

From the standpoint of defining the string free energy, the asymptotic nature of the perturbative expansion \eqref{gstring-expansion} implies that $F (\CX)$ cannot be properly defined by perturbation theory alone. One way to move forward is to use the theory of resurgence \cite{e81}. In this context, the perturbative expansion gets enlarged into a transseries, an object which fully captures all information concerning the observable that it represents, including both perturbative/analytic components (in powers of the string coupling $g_{\text{s}}$) and nonperturbative/non-analytic components (in powers of the ``instanton'' factor $\rme^{-1/g_{\text{s}}}$). The asymptotic and resurgent nature of the perturbative sequence implies the existence of these instanton-type terms, of which there can be many distinct types and with different strengths. Remarkably, all these seemingly independent perturbative and nonperturbative sectors in the transseries turn out to be related to each other via a tight web of asymptotic resurgence relations. In particular, the leading factorial growth of perturbation theory is a consequence of these asymptotic relations, as is any other subleading growth correcting that factorial term. As a result, one may in fact extract, or decode, nonperturbative information from perturbation theory alone and vice-versa. Moreover, these interrelations have somewhat universal forms, and should be expected to hold across a wide range of different problems.

In recent years resurgence has been applied within\footnote{For an introduction to the main ideas of resurgent asymptotics, and a very complete list of references concerning many other recent applications of resurgence, we refer the reader to \cite{abs16}.} topological string theory \cite{m06, msw07, m08, ps09, sw09, kmr10, dmp11, asv11, sv13, cesv13, gmz14, cesv14, ars14, v15, c15} and its double-scaled limits at special points in moduli space \cite{msw07, msw08, gm08, ps09, gikm10, asv11, sv13}. In particular, nonperturbative transseries-solutions to the holomorphic-anomaly equations of the B-model were constructed in \cite{cesv13, cesv14, c15}. These references further focused on the example of local $\BP^2$, a non-compact CY threefold, where a very rich nonperturbative structure was uncovered, with diverse instanton actions vying for dominance on the Borel plane as the moduli changed. In our present paper we wish to turn our attention to the A-model instead, and in particular to the enumerative invariants it generates.

From the standpoint of computing enumerative invariants, the convergence properties of their generating functions might not seem terribly important at first sight. It is nonetheless the case that these convergence properties will dictate the asymptotic behavior of these invariants, in genus and in degree, and this is one of the main question we address in the present work. Furthermore, within the A-model the GW invariants are the internal ingredients constructing the string free energies, and it seems reasonable to transfer resurgence questions and properties from the free energies to the invariants themselves. In particular, one natural question is to ask exactly how the GW invariants are responsible for the (known) factorial growth of the free energies they build. For example, the convergence of \eqref{alphaprime-expansion} roughly implies that, at fixed genus, the large-degree asymptotics of the GW invariants\footnote{We shall use the notation where the boldface character specifies which index (if any) remains fixed.} $N_{\boldsymbol{g},d}$ corresponds at most to a leading exponential growth. On the other hand, the asymptotic nature of \eqref{gstring-expansion} might seem to imply that, at fixed degree, the large-genus asymptotics of the GW invariants $N_{g,\boldsymbol{d}}$ corresponds instead to a leading factorial growth, giving rise to the factorial growth inside the free energy. But this will turn out \textit{not} to be the case. The fixed degree, large-genus asymptotics of the GW invariants is \textit{not} factorial, and we shall see how the factorial growth of the free energy is more subtly encoded at the level of GW invariants.

%%%%%%%%%%%%%%%%%%%%%%%%%%%%%%%%%%%%%%%%%%%%%%%%%%%%%%%%%%%%%%%%%
\begin{figure}[t!]
\begin{center}
\raisebox{0.5cm}{\includegraphics[width=0.4\textwidth]{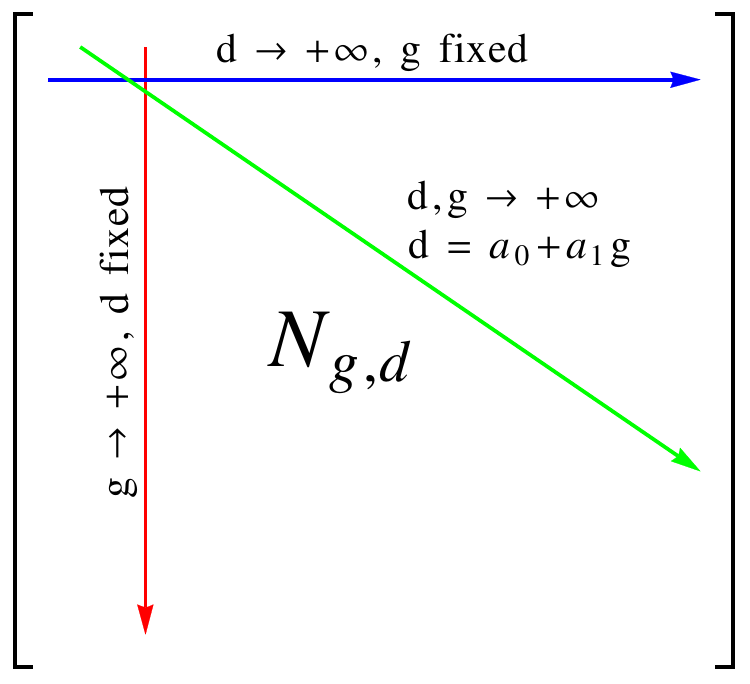}}
\end{center}
\vspace{-1\baselineskip}
\caption{Schematic display of the GW invariants $N_{g,d}$ as a two-dimensional array, with genus and degree representing row and column, respectively. The three arrows are the types of growth that we shall address in this paper: large degree with fixed genus (blue), large genus with fixed degree (red), and combined large degree and genus, with $d=a_0 + a_1 g$ (green).}
\label{fig:GW}
\end{figure}
%%%%%%%%%%%%%%%%%%%%%%%%%%%%%%%%%%%%%%%%%%%%%%%%%%%%%%%%%%%%%%%%%

Note that there are some important differences between addressing resurgent transseries for the B-model free energy, and investigating resurgent asymptotics of A-model enumerative invariants. In the former case, one deals with an asymptotic \textit{series}, which subsequently gets completed into a transseries by the addition of new, nonperturbative sectors. In the latter case, one deals instead with a two-dimensional array of (rational) \textit{numbers}, labeled by both genus and degree (which is represented schematically in figure~\ref{fig:GW}). The GW invariants in this array are not directly the coefficients of any series, so the concept of their transseries extension is not well-defined. However, any asymptotic resurgence relations explaining the different growths of the $N_{g,d}$, in particular along directions with factorial growth, should themselves be dictated by nonperturbative content in the free-energy transseries---possibly also with an enumerative-geometry interpretation. This opens the door to the existence of nonperturbative analogues of the GW invariants. With this idea in mind, we wish to make precise the asymptotic growth of GW invariants along particular directions on this array, as depicted in figure~\ref{fig:GW}:
\begin{itemize}
\item[\color{blue}{$\rightarrow$}] \textbf{Fixed-genus}, large-degree. 
Possibly the most ``classical'' direction previously addressed in the literature, giving rise to leading exponential growth.
\item[\color{red}{$\downarrow$}] Large-genus, \textbf{fixed-degree}. 
Less studied, also giving rise to leading exponential growth. 
\item[\color{green}{$\searrow$}] Large-genus, large-degree. 
Not previously addressed in the literature, finally giving rise to the factorial growth characteristic of the free energy. 
\end{itemize}

Asymptotics of GW invariants\footnote{Asymptotics of related enumerative invariants, such as Donaldson--Thomas or Gopakumar--Vafa invariants, and their relevance towards the computation of M-theoretic black hole entropies, have been addressed in \cite{kkv99, hkmt07}.}, with focus on the fixed-genus and large-degree regime, have been previously addressed in \cite{bcov93, kz99, k05, cgmps06}, where leading exponential growth was found. A fixed-degree, large-genus analysis was done in \cite{z08, mz11}, albeit in a  different set-up\footnote{References \cite{mz99, z08, mz11} address the asymptotics of Weil--Petersson volumes of moduli spaces of algebraic curves, with genus $g$ and $n$ marked punctures (which in some sense corresponds to addressing enumerative invariants of a point). Note that they find some (extra) factorial growth $\sim n!$, but which is associated to the (extra) number of punctures, $n$. In our context this number is $n=0$, as GW invariants arise from the free energy.}, also finding leading exponential behavior. To the best of our knowledge, the ``enumerative source'' of the free-energy factorial growth has never been addressed previously in the literature, and we start filling such gap with our present work. We shall investigate these different asymptotics in several examples, including both compact and non-compact CY threefolds. In particular, our analysis of the exponential growth along horizontal and vertical directions both recovers and generalizes some of the aforementioned previously-known results. The factorial growth is new, and relates to the B-model transseries with its plethora of nonperturbative sectors. Along certain diagonal directions we uncover an universal behavior which is common to geometries in different topological-string universality classes, and which is controlled by the large-radius instanton action. Asymptotic resurgence-like formulae may be written for the ``diagonal'' growth of GW invariants, with their growth dictated by nonperturbative information encoded in the free-energy transseries. In this sense, one should not wonder about transseries completions of GW invariants, but rather about decoding possibly new ``nonperturbative'' enumerative invariants, hidden inside the nonperturbative completions to the B-model topological-string transseries \cite{cesv13, cesv14, c15}.

%%%%%%%%%%%%%%%%%%%%%%%%%%%%%%%%%%%%%%%%%%%%%%%%%%%%%%%%%%%%%%%%%
%%%%%%%%%%%%%%%%%%%%%%%%%%%%%%%%%%%%%%%%%%%%%%%%%%%%%%%%%%%%%%%%%
\section{Setting the Stage and Main Ideas}\label{sec:stage}
%%%%%%%%%%%%%%%%%%%%%%%%%%%%%%%%%%%%%%%%%%%%%%%%%%%%%%%%%%%%%%%%%
%%%%%%%%%%%%%%%%%%%%%%%%%%%%%%%%%%%%%%%%%%%%%%%%%%%%%%%%%%%%%%%%%

Let us formalize the ideas spelled out in our introduction, before addressing an exactly-solvable model (the resolved conifold) in section~\ref{sec:conifold}, and then computationally addressing many different examples in section~\ref{sec:examples}, including the cases of local $\BP^2$, a diagonal slice of local $\BP^1 \times \BP^1$, some local curves, Hurwitz theory, and the quintic compact CY threefold. We begin with general expectations and what sort of structures we wish to unveil, to later materialize in our examples.

Going back to the topological-string asymptotic-series for the free energy \eqref{gstring-expansion}, let us describe it in the B-model as $F^{(0)}(g_{\text{s}};z,\bar{z})$. Here, the string coupling $g_{\text{s}}$ is also the resurgent variable, and the $(0)$ superscript specifies perturbative. The pair $(z,\bar{z})$ may be regarded as just external parameters, or interpreted as complex-structure moduli of the underlying CY threefold. The free energy is asymptotic, of Gevrey-1 type (see, \textit{e.g.}, \cite{glm08}),
\begin{equation}
F^{(0)} \simeq \sum_{g=0}^{+\infty} g_{\text{s}}^{2g-2} F^{(0)}_g, \qquad F^{(0)}_g \sim \Gamma(2g-1) \text{ as } g\to+\infty,
\end{equation}
\noindent
for generic values of $(z,\bar{z})$. Understanding the resurgent properties of $F^{(0)}$ and the role played by the moduli $(z,\bar{z})$ was the main purpose of \cite{cesv13, cesv14}. There, it was shown how to look for a transseries completion to the topological-string free energy of the form
\begin{equation}
F = \sum_{n=0}^{+\infty} \sigma^n\, \rme^{-n A(z)/g_{\text{s}}}\, F^{(n)} (g_{\text{s}}; z,\bar{z}),
\label{eq:transF1param}
\end{equation}
\noindent
where the (multi) instanton sectors $F^{(n)}(g_{\text{s}})$ are also given by asymptotic series. In particular, it was found---both generically and in examples---that \eqref{eq:transF1param} has several nonperturbative sectors, with associated actions $A_\alpha$, all of them holomorphic and determined by the CY geometry.

The transseries \eqref{eq:transF1param} was constructed by combining a nonperturbative interpretation of the holomorphic anomaly equations of \cite{bcov93} with the resurgence relations that transseries generically satisfy, such as, for example,
\begin{equation}
F^{(0)}_g (z,\bar{z}) \sim \frac{\Gamma(2g-1)}{A(z)^{2g-1}}\, F^{(1)}_0(z,\bar{z}), \qquad \text{as } g\to+\infty.
\label{eq:largeorderF0gF10}
\end{equation}
\noindent
Here $F^{(1)}_0$ is the first coefficient of the one-instanton series $F^{(1)}(g_{\text{s}})$ and $A(z)$ is one of the instanton actions (the smallest one in absolute value, for the particular value of $z$). Subleading corrections to \eqref{eq:largeorderF0gF10} lead to further multi-loop coefficients, $F^{(1)}_h$ with  $h=1,2,\ldots$. Generalizations of \eqref{eq:largeorderF0gF10}, now addressing the large-order behavior of the $F^{(n)}_g$ sequences, provide new constraints and relations between higher instanton coefficients. 

This route towards the construction of \eqref{eq:transF1param}, further developed in \cite{c15}, draws a rather complete picture of what a transseries for $F(g_{\text{s}})$ should look like. In principle, such a transseries should contain all nonperturbative information concerning the B-model, but also, via mirror symmetry \cite{hv00}, all A-model nonperturbative information. It is within this context that we shall set our attention upon structures of interest in algebraic and enumerative geometry, arising from the A-model set-up, in particular the case of enumerative GW invariants.

Let us spell out our strategy. The B-model construction \eqref{eq:transF1param} depends upon $(z,\bar{z})$, the complex-structure moduli. From the standpoint of resurgence, these moduli may be regarded as external parameters, without any resurgent properties by themselves. But upon mirror symmetry, they relate the B-model CY threefold $\widetilde{\CX}$, with complex structure $z$, to the A-model mirror-CY threefold $\CX$, with K\"ahler structure $t$. A functional relation $t = t(z)$ is then provided by the mirror map. This means that one may in fact compute the \textit{mirror transseries} to \eqref{eq:transF1param}, where its $F^{(0)}_g (t)$ components are nothing but the GW generating functions as in \eqref{alphaprime-expansion}. Let us next focus on these enumerative invariants in greater detail, with the goal of uncovering which resurgent properties they carry, either intrinsic or merely inherited from the free energy.

%%%%%%%%%%%%%%%%%%%%%%%%%%%%%%%%%%%%%%%%%%%%%%%%%%%%%%%%%%%%%%%%%
\subsection{Enumerative Gromov--Witten Invariants}
%%%%%%%%%%%%%%%%%%%%%%%%%%%%%%%%%%%%%%%%%%%%%%%%%%%%%%%%%%%%%%%%%

GW invariants count embeddings of Riemann surfaces of a given genus into a CY threefold $\CX$, attending to the homology class of the image of this map. Thus, GW invariants are labelled by $g\in\BN$, like the topological-string free energies, and $\beta\in H_2(\CX,\BZ)$,
\begin{equation}
N_{g,\beta} \in \BQ.
\end{equation}
\noindent
Akin to \eqref{alphaprime-expansion}, they show up in the A-model perturbative free-energies through the expansion
\begin{equation}
F^{(0)}_g = \sum_{\beta\in H_2(\CX,\BZ)} N_{g,\beta}\, Q^\beta.
\end{equation}
\noindent
Here we have used the mirror map to translate from complex structure moduli, $z_i$, to K\"ahler moduli, $t_i =: -\log Q_i$ (roughly, the mirror map is $Q_i = \CO(z_i)$). More precisely, if $\omega$ is the (complexified) K\"ahler form in $\CX$ and $[S_i]$, with $i = 1,2,\ldots, b_2(\CX)$, is a basis of $H_2(\CX,\BZ)$, then one finds $\beta = \sum_i n_i [S_i]$ and $t_i := \int_{[S_i]} \omega$, in which case we may denote $Q^\beta = \prod_i Q_i^{n_i} = \exp(-\sum_i n_i\,t_i)$. In order to simplify things in the following, we shall restrict to examples where $b_2(\CX)=1$, in which case the sum over homology classes simplifies to
\begin{equation}
F^{(0)}_g(t) = \sum_{d=1}^{+\infty} N_{g,d}\, Q^d.
\label{eq:F0gtGWQ}
\end{equation}
\noindent
The index $d$ is called the degree of the embedding. See, \textit{e.g.}, \cite{m05} for more details on the relation between the enumerative GW invariants and their A-model generating functions.

Now \eqref{eq:F0gtGWQ} is a convergent series in $Q$, which, in particular, implies that it is \textit{not} resurgent. Its non-vanishing radius of convergence is generically finite, due to a nearby singularity located at the so-called conifold locus \cite{gv95}. This convergence may already suggest that the factorial growth of the free energies $F^{(0)}_g$ in genus must somehow arise from a combined contribution of several different degrees. We shall next try to understand how this might come about.

%%%%%%%%%%%%%%%%%%%%%%%%%%%%%%%%%%%%%%%%%%%%%%%%%%%%%%%%%%%%%%%%%
\subsection{Growth of Enumerative Invariants in Degree and in Genus}
%%%%%%%%%%%%%%%%%%%%%%%%%%%%%%%%%%%%%%%%%%%%%%%%%%%%%%%%%%%%%%%%%

As we introduce most of our main ideas, let us illustrate them with (partial) results from upcoming diverse examples. The simplest such example is naturally attached to the resolved conifold, for which the free energies can be computed exactly (see, \textit{e.g.}, \cite{m04} for a review)
\begin{equation}
F^{(0),\tconi}_g(t) = (-1)^{g-1}\, \frac{B_{2g}}{2g \left(2g-2\right)!}\, \text{Li}_{3-2g} \left( \rme^{-t} \right), \qquad g\geq 2,
\end{equation}
\noindent
where $\text{Li}_p (x)$ is the polylogarithm function. This immediately yields all GW invariants as
\begin{equation}
N^{\tconi}_{g,d} = f^\tconi_g\, d^{2g-3}, \qquad f^\tconi_g := (-1)^{g-1}\, \frac{B_{2g}}{2g \left(2g-2\right)!}.
\label{eq:GWconi}
\end{equation}
\noindent
More interesting geometries we shall later address include the (non-compact) local $\BP^2$ and the (compact) quintic CY threefolds, for which there are no such closed-form expressions. Enumerative invariants may, nonetheless, be generated on the computer to see in more detail how they grow in degree and genus. An example of the sort of numbers we have to work with is show in figure~\ref{fig:sampleGWlocalP2}, in the instance of local $\BP^2$ (to be addressed in section~\ref{sec:P2}).

%%%%%%%%%%%%%%%%%%%%%%%%%%%%%%%%%%%%%%%%%%%%%%%%%%%%%%%%%%%%%%%%%
\begin{figure}[t!]
\begin{center}
\includegraphics[width=\linewidth]{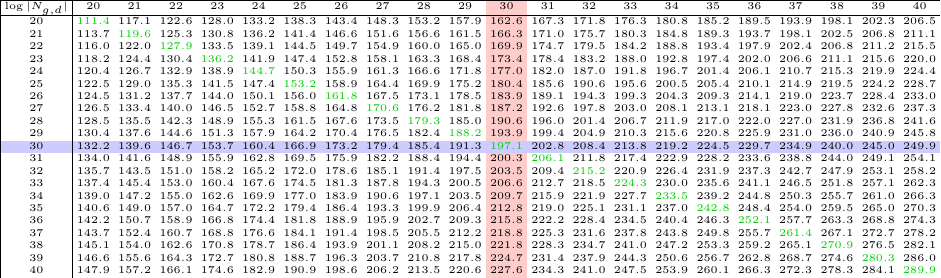}
\includegraphics[width=\linewidth]{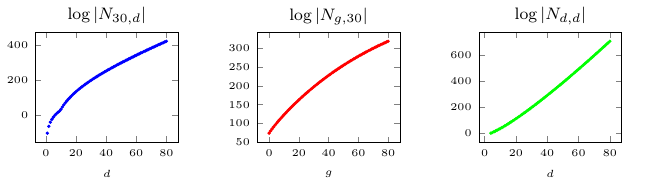}
\end{center}
\caption{Sample of GW invariants for the local $\BP^2$ CY threefold, alongside a visual representation of their growth with respect to degree $d$ (in blue), genus $g$ (in red), and a linear combination of the two (in green). Only in this latter case shall we find a factorial growth.}
\label{fig:sampleGWlocalP2}
\end{figure}
%%%%%%%%%%%%%%%%%%%%%%%%%%%%%%%%%%%%%%%%%%%%%%%%%%%%%%%%%%%%%%%%%

%%%%%%%%%%%%%%%%%%%%%%%%%%%%%%%%%%%%%%%%%%%%%%%%%%%%%%%%%%%%%%%%%
\subsubsection*{Growth in Degree}
%%%%%%%%%%%%%%%%%%%%%%%%%%%%%%%%%%%%%%%%%%%%%%%%%%%%%%%%%%%%%%%%%

Let us first consider the growth in degree at fixed genus. For the resolved conifold the answer is immediate from \eqref{eq:GWconi}: it is given by the degree $d$, raised to a linear function of the genus $g$, namely $2g-3$. For other, more intricate geometries the growth is similar but includes further parameters, such as a critical exponent $\gamma$ which captures distinct topological-string universality classes, \textit{i.e.}, distinct critical behaviors at the phase-transition point (see, \textit{e.g.}, \cite{cgmps06} for a discussion). In general one finds\footnote{Recall the notation where the boldface character specifies which index (if any) is the fixed one.} \cite{cdgp91, bcov93, kz99}
\begin{equation}
N_{\boldsymbol{g},d} \sim d^{(\gamma-2)(1-\boldsymbol{g})-1}\, \rme^{d t_{\text{c}}} \left( \log d \right)^{\alpha + \beta \boldsymbol{g}}, \qquad \text{as } d\to+\infty
\label{larged_general}
\end{equation}
\noindent
(further including a possibly $g$-dependent pre-factor). In this expression, $\rme^{-t_{\text{c}}}=Q_{\text{c}}$ marks the radius of convergence of $F^{(0)}_g$ on the $Q$-plane. Expression \eqref{larged_general} implies that the resolved conifold has $\gamma = 0$, being in the same universality class as, \textit{e.g.}, the local $\BP^2$ or the quintic CY threefolds. For example, for the quintic we have \cite{cdgp91, bcov93}
\begin{equation}
N^\tQuintic_{\boldsymbol{g},d} \sim d^{2\boldsymbol{g}-3}\, \rme^{d t_{\text{c}}} \left( \log d \right)^{2\boldsymbol{g}-2},
\end{equation}
\noindent
where $t_{\text{c}} = 7.58995\ldots$. For local $\BP^2$ this growth is illustrated in the leftmost plot of figure~\ref{fig:sampleGWlocalP2}, at fixed genus $g=30$. For very large degree $d$, the plotted curve must tend to a straight line of slope $|t_{\text{c}}|$. On the other hand, for the family of local curves $X_p = \CO(p-2)\oplus\CO(-p)\to \BP^1$ ($p\geq 3$) the critical exponent is instead $\gamma = -1/2$ \cite{cgmps06}, implying a distinct universality class and we shall discuss this example later in section~\ref{sec:curve}.

As mentioned earlier, the radius of convergence $Q_{\text{c}}$ signals a singularity of the (free energy) generating function. Such critical points correspond to the points in moduli space where the A-model geometric interpretation breaks down, with a phase transition taking place from the large-radius (geometric) phase to a non-geometric phase. Near such a singularity,
\begin{equation}
F^{(0)}_g \sim c_g \left( Q_{\text{c}}-Q \right)^{(1-g)(2-\gamma)}, \qquad g\ge2.
\label{F_larged_crit}
\end{equation}
\noindent
Nearby $Q_{\text{c}}$ all geometries within the same universality class will resemble each other, which implies that the coefficients $c_g$ are universal. For example, for $\gamma=0$ there is a double-scaling limit 
\begin{equation}
g_s \to 0, \quad Q \to Q_c, \qquad \text{with} \quad \kappa := g_{\text{s}} \left( Q_{\text{c}}-Q \right)^{-1} \quad \text{fixed},
\end{equation}
\noindent
such that 
\begin{equation}
F^{(0)}(g_{\text{s}};t) \to F^{(0)}_{\text{ds}} (\kappa) \simeq \sum_{g=2}^{+\infty} \frac{B_{2g}}{2g \left(2g-2\right)}\, \kappa^{2g-2},
\end{equation}
\noindent
which matches the $c=1$ string at self-dual radius \cite{gv95}. For other values of $\gamma$ the coefficients $c_g$ may be more complicated, being solutions to a nonlinear ODE such as Painlev\'e I, for example. 

%%%%%%%%%%%%%%%%%%%%%%%%%%%%%%%%%%%%%%%%%%%%%%%%%%%%%%%%%%%%%%%%%
\subsubsection*{Growth in Genus}
%%%%%%%%%%%%%%%%%%%%%%%%%%%%%%%%%%%%%%%%%%%%%%%%%%%%%%%%%%%%%%%%%

As we turn towards understanding the dependence of GW invariants on genus, at fixed degree, $N_{g,\boldsymbol{d}}$, it becomes useful to introduce the Gopakumar--Vafa (GV) invariants. These invariants are integer numbers, roughly counting the number of BPS states inside a CY threefold $\CX$, and resulting from a reorganization of the A-model free energy as introduced in \cite{gv98a, gv98c}. The complete result involves a Schwinger-type computation which rewrites the free energy as an index that counts string-theoretic BPS states via an M-theory uplift, and which finally yields
\begin{equation}
\sum_{g=0}^{+\infty} g_{\text{s}}^{2g-2} F^{(0)}_g (Q) = g_{\text{s}}^2\, c(t_i) + \ell(t_i) + \sum_{r=0}^{+\infty} \sum_\beta n_r^{(\beta)} \sum_{m=1}^{+\infty} \frac{1}{m} \left(2 \sin \frac{m g_{\text{s}}}{2} \right)^{2r-2} Q^{\beta m}.
\label{eq:GVexpansion}
\end{equation}
\noindent
Here, the $n_r^{(\beta)}\in\BZ$ are the GV invariants, labeled by the K\"ahler class $\beta$ and a spin index $r$. The polynomials $c(t_i)$ and $\ell(t_i)$ will play no role in the following.

It is straightforward to check that, generically, the GW invariants may be written explicitly in terms of the GV invariants as
\begin{equation}
N_{g,d} = \sum_{r=0}^g c_{r,g} \sum_{\beta|d} n_r^{(\beta)} \left( \frac{d}{\beta} \right)^{2g-3}, \qquad \text{using} \quad \left(2 \sin \frac{x}{2} \right)^{2r-2} =: \sum_{h=r}^{+\infty} c_{r,h}\, x^{2h-2}.
\label{eq:GWcGV}
\end{equation}
\noindent
In here we already find the $d^{2g-3}$ dependence which is characteristic of the resolved conifold. Now, an important property of the GV invariants, which will be useful in the following, is that for each degree $d$ there is a specific genus, $G(d)$, after which all these invariants vanish, \textit{i.e.}, $n_r^{(d)} = 0$ for $r>G(d)$ \cite{gv98c}. This function $G(d)$ is a polynomial in $d$, and this will simplify the dependence on $g$ in \eqref{eq:GWcGV} by replacing the upper limit in the $r$-sum; writing for all $g$
\begin{equation}
N_{g,d} = \sum_{r=0}^{G(d)} c_{r,g} \sum_{\beta|d} n_r^{(\beta)} \left( \frac{d}{\beta} \right)^{2g-3}.
\end{equation}
\noindent
In this expression the only remaining dependence upon the genus, $g$, lies in the coefficients $c_{r,g}$ and in the power of $d/\beta$. Since the coefficients $c_{r,g}$ are \textit{independent} of the CY geometry, we should expect a generic formula to hold for the large growth of $N_{g,\boldsymbol{d}}$ in genus. For example, as we shall discuss later on, for the case of local $\BP^2$ and degree $d=4$ we find
\begin{equation}
\label{quanticNd=4}
N^\tPTwo_{g,d=4} \sim (-1)^{g-1} \frac{B_{2g}}{2g \left(2g-2\right)!}\, 4^{2g-3} \left( 3 -\frac{6}{2^{2g-3}} - \frac{192}{4^{2g-3}} \right) + \frac{(-1)^{g-1}}{\left(2g-2\right)!}\, \frac{2^{2g-2}}{4} \left( 120 + \frac{336}{2^{2g-2}} \right),
\end{equation}
\noindent
This formula, involving Bernoulli numbers and factorials, is actually \textit{exact} for $g \geq 2$, not just a large-$g$ approximation. The first numbers ($3$, $-6$, $-192$) can be recognized as the GV invariants $n_0^{(1)}$, $n_0^{(2)}$, and $n_0^{(4)}$ for local $\BP^2$, whereas the other ($120$, $336$) are more complicated combinations involving higher-genera invariants. As such, in general, we can expect the following formula to hold (see appendix~\ref{sec:abc_coefficients} for a proof)
\begin{equation}
N_{g,d} = f^\tconi_g \left\{ \sum_{n|d} a_n \left( \frac{d}{n} \right)^{2g-3} + \frac{2g}{B_{2g}}\, \frac{1}{d} \left( c_d\, \delta_{g,1} + \sum_{n=1}^{G(d)-1} b_{d,n}\, n^{2g-2} \right) \right\}.
\label{eq:GW_abc}
\end{equation}
\noindent
where $a_d \equiv n_0^{(d)}$ and $b_{d,n}, c_d \in \BZ$. In this expression the dependence on the genus $g$ is explicit---one could even plug-in non-integer values of the genus after analytically continuing the Bernoulli numbers. If we fix the degree, as in $N_{g,\boldsymbol{d}}$, it is then simple to see that the leading growth in genus is exponential, $\boldsymbol{d}^{2g-3}$, with further subleading exponential and inverse-of-factorial corrections in $g$. The second plot in figure \ref{fig:sampleGWlocalP2} illustrates this genus dependence, at fixed degree $d=30$, for local $\BP^2$. The plotted curve is asymptotic to a straight line of slope $2 \log \boldsymbol{d}$.

Expression \eqref{eq:GW_abc} shows how the contribution of GW invariants, $N_{g,\boldsymbol{d}}$, to the free energies at a fixed single degree, $\boldsymbol{d}$,  again cannot be responsible for the factorial growth we need to find. In this way, the only option we have left to find the $\sim (2g)!$ factorial growth of the free energies, encoded in the GW invariants, is to address the \textit{combined} growth in genus and degree.

%%%%%%%%%%%%%%%%%%%%%%%%%%%%%%%%%%%%%%%%%%%%%%%%%%%%%%%%%%%%%%%%%
\subsubsection*{Combined Growth in Genus and Degree}
%%%%%%%%%%%%%%%%%%%%%%%%%%%%%%%%%%%%%%%%%%%%%%%%%%%%%%%%%%%%%%%%%

Upon a second look at the (already familiar) characteristic behavior of GW invariants in $d^{2g-3}$, it should be straightforward to deduce that when $d$ and $g$ are linearly related, then the factorial growth is immediately realized. The link is the classical Stirling approximation,
\begin{equation}
n^n \sim \frac{n!\, \rme^n}{\sqrt{2\pi n}}.
\end{equation}
\noindent
Consider one more time the example of the resolved conifold in \eqref{eq:GWconi}, and assume the dependence $d = a_0 + a_1 g$ for some values of $a_0$ and $a_1$. Then, to leading order in $g$, one finds
\begin{equation}
N^\tconi_{g,d=a_0+a_1g} \sim \frac{\Gamma \left(2g-\frac{3}{2}\right)}{\left(\frac{4\pi}{\rme\, a_1}\right)^{2g-\frac{3}{2}}}\, \frac{\left( \frac{2\, \rme}{a_1} \right)^{\frac{3}{2}} \rme^{2 \frac{a_0}{a_1}}}{2\pi^2}.
\label{eq:loGWconia0a1}
\end{equation}
\noindent
The factorial in $g$ is now explicit and it comes from the term $d^{2g-3}$ when $d = a_0+a_1g$. On the other hand, recall that the leading growth of the free energy $F^{(0),\tconi}_g$ in this case is \cite{ps09}
\begin{equation}
F^{(0),\tconi}_g (Q) \sim \frac{\Gamma \left(2g-1\right)}{\left(2\pi t\right)^{2g-1}}\, \frac{t}{\pi},
\label{eq:FconiQexpansion}
\end{equation}
\noindent
with instanton action $A=2\pi t$. To connect this resurgence relation to the one in \eqref{eq:loGWconia0a1}, one has to recall the definition of GW invariants \eqref{eq:F0gtGWQ}
\begin{equation}
F^{(0),\tconi}_g = \sum_{d=1}^{+\infty} N^\tconi_{g,d}\, Q^d,
\label{eq:extraQd}
\end{equation}
\noindent
and then notice that the largest contribution to this sum, for a fixed value of $Q$ on the right-hand side, comes from
\begin{equation}
\frac{\partial}{\partial d} \left( N^\tconi_{g,d} Q^d \right) = 0 \qquad \Rightarrow \qquad d = \frac{2g-3}{t}.
\label{eq:coni_saddle_point}
\end{equation}
\noindent
So we should expect that taking $a_1 = 2/t$ and $a_0=-3/t$ in \eqref{eq:loGWconia0a1} will reproduce something resembling \eqref{eq:FconiQexpansion}. Indeed, one can easily check that we obtain the same instanton action as $\frac{4\pi}{a_1} = 2\pi t$ (where we ignore the exponential factor in \eqref{eq:loGWconia0a1} as such terms should be regrouped into the factor of $Q^d$ in \eqref{eq:extraQd}; \textit{i.e.}, exponentials may be ignored when matching with \eqref{eq:FconiQexpansion}).

This strategy of selecting the leading contribution from the $Q$-expansion inside $F^{(0)}_g (Q)$ can be pushed further. One way to do so is to approximate the sum over the degree by an integration, and then perform a saddle-point approximation---and this will be a main theme throughout our analyses. Consider the following saddle-point approximation around $x = x_0$ (where $V'(x_0)=0$),
\begin{equation}
\Phi (\lambda) = \int_0^{+\infty} \rmd x\, \rme^{\lambda V(x)} \sim \rme^{\lambda V(x_0)}\, \sqrt{-\frac{2\pi}{\lambda V''(x_0)}} \left( 1 + \CO\left( \frac{1}{\lambda} \right) \right).
\label{eq:saddle_point_approx_H}
\end{equation}
\noindent
To apply this generic formula to our problem one just has to identify
\begin{equation}
\Phi (\lambda) \leftrightarrow F^{(0)}_g (Q), \qquad \rme^{\lambda V(x)} \leftrightarrow N_{g,x}\, Q^x, \qquad x \leftrightarrow d, \qquad \lambda \propto g.
\end{equation}
\noindent
The only subtlety in this identification is that the saddle-point $x_0$ is also proportional to the coupling $\lambda$, as we saw for the resolved conifold \eqref{eq:coni_saddle_point}. In any case, our goal is to solve for $\rme^{\lambda V(x_0)}$ in \eqref{eq:saddle_point_approx_H}. Then, the only obstacle we have in order to do so is knowing the value of $V''(x_0)$. For the resolved conifold we had an explicit formula and, as such, we knew that it was $-t/(a_0+a_1 g)^2$ where $x_0 = a_0+a_1 g$ and $\lambda$ chosen the same; but in general there are no such explicit formulae. Nonetheless, let us postulate a completely similar dependence in $g$, namely
\begin{equation}
V''(x_0) \equiv -\frac{a_2(Q)}{\lambda^2} + \CO \left(\frac{1}{\lambda^3}\right),
\end{equation}
\noindent
where we have chosen the explicit relation $\lambda = a_0 + a_1 g$, and introduced the function $a_2(Q)$. If one makes further use of the leading large-order growth of the free energies \cite{cesv13},
\begin{equation}
\Phi (\lambda) = F^{(0)}_g(Q) \sim \frac{\Gamma \left(2g-\beta\right)}{A^{2g-\beta}}\, F^{(1)}_0,
\end{equation}
\noindent
we finally obtain
\begin{equation}
\left. N_{g,x_0}\, Q^{x_0} \right|_{x_0=a_0+a_1 g} \sim \frac{\Gamma \left(2g-\beta-\frac{1}{2}\right)}{A^{2g-\beta-\frac{1}{2}}} \left(\frac{a_2}{\pi a_1 A} \right)^{\frac{1}{2}} F^{(1)}_0.
\label{eq:GW_saddle_point_growth}
\end{equation}
\noindent
Note that this large-order relation depends on $a_1$ and $a_2$, functions of $Q$ which define the position and shape of the saddle. For the resolved conifold, and even for other geometries with actions proportional to a K\"ahler parameter, we find that $a_1 = 2/t$ and $a_2 = t$. But for general geometries we do not know what these functions are or should be, and one has to run computational experiments in order to judiciously try to fix them. Note that once one approximates the sum over the degree by an integration, then different saddles will correspond to different leading actions, which may depend on the value of $Q$. For the resolved conifold there is only one leading action and one saddle. But for general geometries we can expect several of them---albeit one is always proportional to the K\"ahler parameter $t$. This is illustrated in figure~\ref{fig:leading_degrees}, where we have plotted saddles for the resolved conifold and local $\BP^2$ (we shall discuss these plots in greater detail later on). The saddles are identified by numerically selecting, at fixed values of $g$ and $t$ but varying $d$, the GW invariants which contribute the most to the perturbative free energy. Both models clearly show a saddle associated to a K\"ahler action, with $A= 2\pi t$. For local $\BP^2$ there is one further saddle, related to a conifold action, to be discussed in section~\ref{sec:P2}.

%%%%%%%%%%%%%%%%%%%%%%%%%%%%%%%%%%%%%%%%%%%%%%%%%%%%%%%%%%%%%%%%%
\begin{figure}[t!]
\begin{center}
\includegraphics[width=0.46\textwidth]{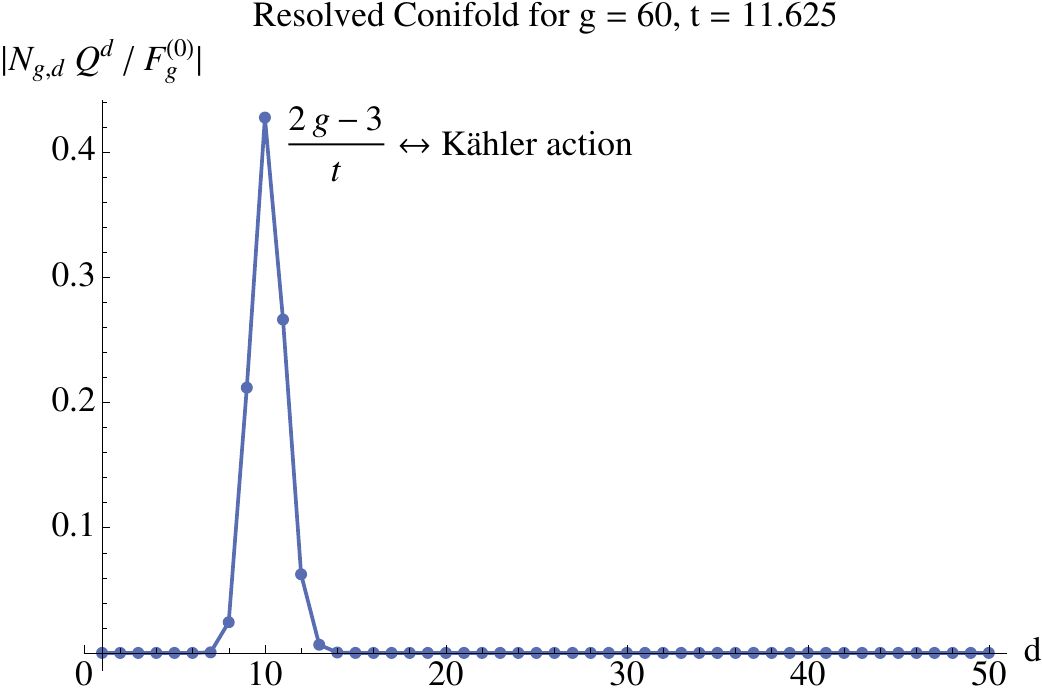}
\hspace{0.05\textwidth}
\includegraphics[width=0.46\textwidth]{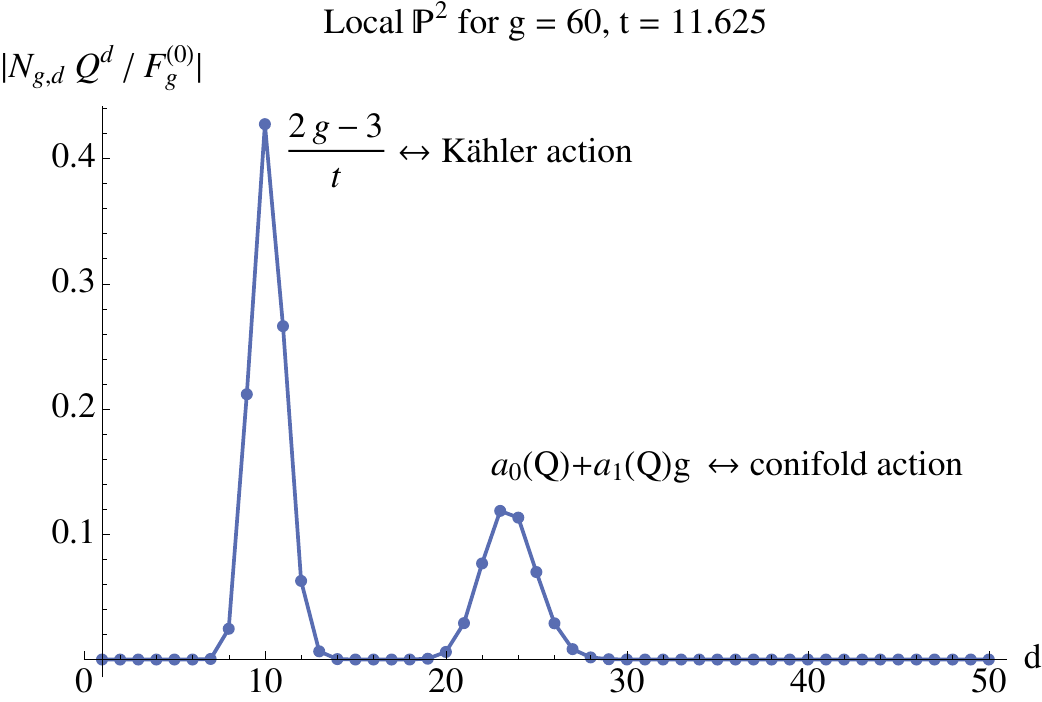}
\end{center}
\caption{Graphical representation of which GW invariants contribute the most to a free energy $F^{(0)}_g (Q)$, for fixed values of $g$ and $Q=\rme^{-t}$. This contribution is estimated by comparing the value of $N_{g,d}\, Q^d$, at different values of the degree $d$, against the total value of the genus-$g$ perturbative free energy $F_g^{(0)}$. The resolved conifold, portrayed on the left, has a single saddle-point corresponding to the action $A=2\pi t$; whereas for local $\BP^2$, portrayed on the right, an extra saddle-point attached to the conifold action is also present. These saddles may exchange dominance depending on the value of $Q$, but the set of leading degrees will always be in correspondence with the set of leading instanton actions.}
\label{fig:leading_degrees}
\end{figure}
%%%%%%%%%%%%%%%%%%%%%%%%%%%%%%%%%%%%%%%%%%%%%%%%%%%%%%%%%%%%%%%%%

%%%%%%%%%%%%%%%%%%%%%%%%%%%%%%%%%%%%%%%%%%%%%%%%%%%%%%%%%%%%%%%%%
\subsubsection*{The Main Question}
%%%%%%%%%%%%%%%%%%%%%%%%%%%%%%%%%%%%%%%%%%%%%%%%%%%%%%%%%%%%%%%%%

Let us finally address the main question motivating this paper. Are there nonperturbative extensions of the enumerative GW invariants---denote them by ``$N^{(n)}_{g,d}$'', with $n$ an ``instanton label''---just like there are nonperturbative extensions $F^{(n)}_g$ of the perturbative free energy? And if so, what is their enumerative interpretation, \textit{i.e.}, which counting problem is associated to these new numbers? An argument in favor of an affirmative answer arises from considering the A-model mirror to the B-model resurgent analysis of the free energy, and its associated transseries constructions \cite{cesv13, cesv14}. This procedure would certainly lead to an A-model transseries (and we shall illustrate this in the example of local $\BP^2$), but the resulting transseries would not be an adequate generating function. In fact, while the perturbative $F_g (Q)$ is a natural generating function, collecting the GW invariants as a $Q$-expansion, the higher instanton sectors $F^{(n)}_g (t)$ will not be regular at $Q=0$ and a naive $Q$-expansion is no longer an option. Then how do we extract the nonperturbative counterparts?

Schematically, we want to make sense of the following diagram
\begin{equation}
\begin{tikzpicture}[node distance=2cm,auto]
  \node (F0g) {$F^{(0)}_g$};
  \node (Fng) [node distance=6.0cm,right of=F0g] {$F^{(n)}_g$};
  \node (Nngd) [node distance=2.0cm,below of=Fng] {``$N^{(n)}_{g,d}$''?};
  \node (Ngd) [node distance=6.0cm,left of=Nngd] {$N_{g,d}$};
  \draw[->] (F0g) to node {{\small resurgence}} (Fng);
  \draw[->] (Fng) to node {{\small expansion?}} (Nngd);
  \draw[->] (F0g) to node [swap] {{\small $Q$-expansion}} (Ngd);
  \draw[thick,densely dashed][->] (Ngd) to node [swap] {{\small resurgence + interpretation?}} (Nngd);
\end{tikzpicture}
\label{eq:resurgene_Qexpansion_diag}
\end{equation}
\noindent
The left and upper arrows are well understood. The left arrow is just the A-model definition of GW invariants, while the upper arrow was made precise within the B-model set-up in \cite{cesv13, cesv14}. In this paper we try to take the first (exploratory) steps towards the definition of lower and right arrows, but a complete answer can only come with a geometric/enumerative interpretation of these conjectured quantities ``$N^{(n)}_{g,d}$'', which is beyond the scope of the present work.

What could these nonperturbative invariants ``$N^{(n)}_{g,d}$'' be counting? With the nonperturbative sectors in the topological-string transseries naturally associated to D-brane sectors \cite{msw07, ps09}, one possibility is that their counting is associated to  embeddings of Riemann surfaces with boundaries, of certain genus and degree. These A-brane open-strings end in middle-dimensional lagrangian submanifolds (see, \textit{e.g.}, \cite{ooy96}) and it is also possible that some counting associated to the corresponding target-space wrappings would play a part. But this type of counting is usually associated to the \textit{open} GW invariants (see, \textit{e.g.}, \cite{gz01}), which means that---should this be the correct interpretation of the nonperturbative invariants---there may be an interesting link between closed and open GW invariants arising from relating perturbative and nonperturbative data in the topological-string transseries. Of course topological-string D-branes also relate to more intricate mathematical constructions, such as Fukaya and derived categories (see, \textit{e.g.}, \cite{a04}), in which case the counting associated to the ``$N^{(n)}_{g,d}$ invariants'' may be much more complicated.

Further note that, as GW invariants themselves have no transseries completions, we do not expect the lower arrow to be defined directly but rather as combination of left, upper, and right arrows (alongside the mirror map). In this way, one will have to extract the ``$N^{(n)}_{g,d}$ invariants'' directly out of the nonperturbative sectors $F^{(n)}_g$. Now, the $Q$-expansion of the perturbative sector arises from a worldsheet-instanton expansion and thus naturally relates to a counting problem. But the nonperturbative sectors lack such power-series expansions in $Q$ (we shall soon illustrate in a couple of examples how they have singularities at $Q=0$), implying that any nonperturbative GW invariants hidden inside the nonperturbative free-energies might be difficult to extract and their enumerative interpretation harder to decode (at the very least they will imply understanding how the aforementioned singularities come about, upon use of the mirror map). Furthermore, even after performing an asymptotic resurgent analysis of $N_{g,d}$, we have to disentangle the dependence in $t$, coming from the parameters $a_0$ and $a_1$, in the linear dependence between degree and genus. At the end of the day, this leaves the right arrow to be defined. What one has to do is to understand, via mirror symmetry, how to relate nonperturbative multi-loop multi-instanton coefficients in the B-model transseries, to the nonperturbative sectors appearing in the asymptotic resurgence relations for the combined genus/degree growth of GW invariants.

Our goal in this paper is to initiate this line of research, computationally exploring diverse CY examples. We try to identify the structure of these new invariants, as they are encoded in the nonperturbative content of the A-model free energy, but shall leave open their subsequent enumerative interpretation for future research.

%%%%%%%%%%%%%%%%%%%%%%%%%%%%%%%%%%%%%%%%%%%%%%%%%%%%%%%%%%%%%%%%%
%%%%%%%%%%%%%%%%%%%%%%%%%%%%%%%%%%%%%%%%%%%%%%%%%%%%%%%%%%%%%%%%%
\section{An Exactly-Solvable Model: The Resolved Conifold}\label{sec:conifold}
%%%%%%%%%%%%%%%%%%%%%%%%%%%%%%%%%%%%%%%%%%%%%%%%%%%%%%%%%%%%%%%%%
%%%%%%%%%%%%%%%%%%%%%%%%%%%%%%%%%%%%%%%%%%%%%%%%%%%%%%%%%%%%%%%%%

This section addresses our first example, concerning an exactly solvable model: the resolved conifold. This toric variety is a non-compact CY threefold which is the total space of the bundle $\CO (-1) \oplus \CO (-1) \to \BP^1$. The perturbative free-energy for the resolved conifold can be computed exactly to all orders in the genus expansion (see, \textit{e.g.}, \cite{m04} and references therein). This of course translates to the fact that one may obtain analytical expressions for all its (infinite) GW invariants \cite{fp98}. For any genus $g$, the results are
\begin{align}
F^{(0)}_0 &= \frac{t^3}{12} - \frac{\pi^2t}{6} + \zeta(3) - \text{Li}_3 \left(\rme^{-t}\right), \\ 
F^{(0)}_1 &= - \frac{t}{24} +\zeta'(-1) + \frac{1}{12}\, \text{Li}_1 \left(\rme^{-t}\right), \\
F^{(0)}_g &= \frac{B_{2g}B_{2g-2}}{2g \left(2g-2\right) \left(2g-2\right)!} + (-1)^{g-1}\, \frac{B_{2g}}{2g \left(2g-2\right)!} \text{Li}_{3-2g} \left(\rme^{-t}\right), \qquad g \ge 2,
\end{align}
\noindent
where $\text{Li}_{p} (z)$ is the polylogarithm of index $p$. In the following we will drop the contribution from the constant map \cite{mm98, fp98} and mostly focus on the large-order contributions $g \ge 2$.

Due to the polylogarithm these free energies grow factorially in the genus and lead to an asymptotic, Gevrey-1 perturbative free-energy \cite{glm08}. The resurgent properties of this series have been studied in detail in \cite{ps09, ars14}, with the result
\begin{equation}
F^{(0)}_g \sim \sum_{n=1}^{+\infty} \sum_{m\in\BZ} \left\{ \frac{\Gamma \left(2g-1\right)}{\left( n A_m \right)^{2g-1}}\, \frac{A_m}{2\pi^2 n} + \frac{\Gamma \left(2g-2\right)}{\left( n A_m \right)^{2g-2}}\, \frac{1}{2\pi^2 n^2} \right\},
\label{largeo_coni}
\end{equation}
\noindent
where $A_m (t) = 2\pi \left( t + 2\pi\rmi m \right)$ are the instanton actions. For our purposes, we shall focus on the \textit{leading} contribution, whose action is $A=2\pi t$, in which case
\begin{equation}
F^{(0)}_g \sim \frac{\Gamma \left(2g-1\right)}{A^{2g-1}}\, \frac{A}{2\pi^2}. 
\label{eq:resconifold_F0g_F10}
\end{equation}
\noindent
Let us next translate these resurgent properties to the level of GW invariants.

The GW invariants for the resolved conifold can be immediately read from the free energies, by simply expanding the polylogarithm in power series. One finds
\begin{equation}
N_{g,d}^\tconi = f^\tconi_g\, d^{2g-3},
\end{equation}
\noindent
where $f^{\tconi}_g$ includes the Bernoulli dependence and is defined in \eqref{eq:GWconi}. These invariants have such a simple form, given that they are actually generated by a single non-vanishing GV invariant
\begin{equation}
n_0^{(1)} = 1.
\end{equation}
\noindent
Likewise, the $abc$-coefficients we introduced in \eqref{eq:GW_abc} vanish except for the one which equals the GV invariant, $a_1 = n_0^{(1)} =1$. This makes this geometry considerably simpler than the ones we shall explore later, allowing for an analytic treatment whose features will also show up later. 

As we anticipated in some detail in the previous section, the factorial growth of the free energy arises from the term $d^{2g-3}$ when $d$ grows linearly with $g$. This is completely precise when the degree is a saddle point, in the sense explained earlier. This point, $d = (2g-3)/t$, was computed in equation \eqref{eq:coni_saddle_point} and graphically represented in figure~\ref{fig:leading_degrees} (left plot). One caveat about the saddle-point approximation is that generically the saddle-point lands on non-integer values of the degree. In order to be able to do numerical analyses with actual GW invariants, we look at nearby (integer) values of the degree. A practical computational choice, one that we will also use for other geometries, is to set
\begin{equation}
g = \frac{t}{2}d + \sh, \quad \text{with} \quad -\floor*{\frac{t}{4}-\frac{3}{2}} \leq \sh \leq \floor*{\frac{t}{4}+\frac{3}{2}},
\label{eq:def_p}
\end{equation}
\noindent
and set $t$ to an even integer (and $q$ must consequentially be an integer).

Since in this example there is an analytic expression for all GW invariants, we can use it to obtain the resurgence relation
\begin{equation}
\left. f^\tconi_g\, d^{2g-3}\, Q^d \right|_{g=\frac{t}{2}d+\sh} \sim \sum_{h=0}^{+\infty}  \frac{\Gamma \left(2g-\frac{3}{2}-h\right)}{\left( 2\pi t \right)^{2g-\frac{3}{2}-h}}\,  \frac{t^{\frac{3}{2}-h}}{2^{2h+1}\, \pi^{h+2}}\, \polyname_h(\sh).
\label{eq:GW_coni_large_order}
\end{equation}
\noindent
This expression is obtained by making use of the following asymptotic formulae (for large degree $d$ and genus $g$):
\bea
\left. f^\tconi_g \right|_{g=\frac{t}{2}d+\sh} &\sim& 2\, \frac{2g-1}{\left(2\pi\right)^{2g}}\, \zeta(2g), \\
\left. d^{2g-3} \right|_{g=\frac{t}{2}d+\sh} &\sim& \frac{\rme^{t d}}{\sqrt{2\pi}}\,  \frac{\Gamma \left(2g-\frac{5}{2}\right)}{t^{2g-3}}\, \sum_{n=0}^{+\infty} \frac{\varrho_n(\sh)}{\left(t d\right)^n},
\eea
\noindent
where $\varrho_n(p)$ is a polynomial of degree $2n$ in $\sh$. As to the polynomials $\polyname_h(\sh)$ in \eqref{eq:GW_coni_large_order} above, these are polynomials in $\sh$ of degree $2h$ with rational\footnote{This depends on the way they are presently written. It is simple to note that the denominators in the first terms of the polynomials, the numbers $(1,12,288,51840)$, correspond to the denominators that appear in the asymptotic expansion of the Gamma-function. In particular, we could rewrite \eqref{polyname0} through \eqref{polyname3} by pulling out these overall factors, in which case we would then find a set of polynomials with integer coefficients.} coefficients. The first of which are
\bea
\label{polyname0}
\polyname_{0}(\sh) &=& 1, \\
\label{polyname1}
\polyname_{1}(\sh) &=& -\frac{71}{12}+12 \sh-4 \sh^2, \\
\label{polyname2}
\polyname_{2}(\sh) &=& \frac{11545}{288}-131 \sh+\frac{419 \sh^2}{3}-\frac{176 \sh^3}{3}+8 \sh^4, \\
\label{polyname3}
\polyname_{3}(\sh) &=& -\frac{17534803}{51840}+\frac{33553 \sh}{24}-\frac{157393 \sh^2}{72}+\frac{15220 \sh^3}{9}-\frac{2062 \sh^4}{3}+\frac{416 \sh^5}{3}-\frac{32\sh^6}{3},
\eea
\noindent
and in general they are such that they make the following asymptotic expansion hold for any $\sh$ as $x\to+\infty$,
\begin{equation}
\sqrt{2\pi}\, \rme^{2\sh-x} \left(x-1\right) \left(x-2\sh\right)^{x-3} \sim \sum_{h=0}^{+\infty} \Gamma \left(x-\frac{3}{2}-h\right) 2^{-h}\, \polyname_h (\sh).
\label{eq:CP_h_polynomials}
\end{equation}
\noindent
Note that expression \eqref{eq:GW_coni_large_order} conforms to the usual resurgence relations, which, for some generic free-energy perturbative expansion, look like (see, \textit{e.g.}, \cite{abs16} for an introduction) 
\bea
F_{g}^{(0)} &\sim& \sum_{n=1}^{+\infty} \frac{\Gamma \left( 2g-n\beta \right)}{\left( n A \right)^{2g-n\beta}}\, \frac{S_1^n}{2\pi\rmi}\, \sum_{h=0}^{+\infty} \frac{\Gamma \left( 2g-n\beta-h \right)}{\Gamma \left( 2g-n\beta \right)}\, F_{h}^{(n)} \left( n A \right)^{h} = \\
&=&
\frac{\Gamma \left( 2g-\beta \right)}{A^{2g-\beta}}\, \frac{S_1}{2\pi\rmi} \left( F_{0}^{(1)} + \frac{A}{2g-\beta-1}\, F_{1}^{(1)} + \frac{A^{2}}{\left( 2g-\beta-1 \right) \left( 2g-\beta-2 \right)}\, F_{2}^{(1)} + \cdots \right) + \nonumber \\
&+&
\frac{\Gamma \left( 2g-2\beta \right)}{\left( 2 A \right)^{2g-2\beta}}\, \frac{S_1^2}{2\pi\rmi} \left( F_{0}^{(2)} + \frac{2 A}{2g-2\beta-1}\, F_{1}^{(2)} + \cdots \right) + \mathcal{O}(3^{-2g}). \nonumber
\eea
\noindent
Indeed, in \eqref{eq:GW_coni_large_order} one immediately identifies the $\sim \left( 2g \right)!$ growth, alongside the instanton action $A = 2\pi t$ which is the \textit{same}  action that appears in the free energies. Of course \eqref{eq:GW_coni_large_order} also has higher instanton corrections which improve the asymptotics further as in the above expression. These arise from including the (exponentially) subleading terms in $\zeta(2g) = \sum_{n=1}^{+\infty} n^{-2g}$ in the large-$g$ expansion of the Bernoulli numbers, $B_{2g}$, in \eqref{eq:GWconi}; and from  computing the complete large-$d$ transseries expansion of $d^{td+2\sh-3}$. The result is
\begin{equation}
\left. f^\tconi_g\, d^{2g-3}\, Q^d \right|_{g = \frac{t}{2}d+\sh} \sim \sum_{n=1}^{+\infty
} \sum_{h=0}^{+\infty} \frac{\Gamma \left(2g-\frac{3}{2}-h\right)}{\left( n A \right)^{2g-\frac{3}{2}-h}}\, \frac{t^{\frac{3}{2}-h}}{2^{2h+1}\, \pi^{h+2}\, n^{\frac{3}{2}+h}}\, \polyname_h (\sh).
\end{equation}

Some computational tests on the validity of \eqref{eq:GW_coni_large_order} are shown in figures~\ref{fig:conifold_Kahler_Action} and~\ref{fig:conifold_LOOPS}. Figure~\ref{fig:conifold_Kahler_Action} presents a test of the instanton action. We plot the analytical $A_{\text{K}} = 2\pi t$ against numerical tests of this action. We use the standard techniques of Richardson extrapolation/Richardson transforms\footnote{Given a $k$-sequence $\BS(k) \simeq s_0 + \frac{s_1}{k} + \frac{s_2}{k^2} + \cdots$, its n-th Richardson transform is defined as
\be
\text{RT}_{\BS} (k,n) = \sum_{m=0}^n (-1)^{m+n}\, \frac{(k+m)^n}{m!(n-m)!}\, \BS(k+m).
\ee
\noindent
Convergence is accelerated by cancellation of subleading terms in the original sequence up to $k^{-n}$ order.} to accelerate convergence (similar to tests done in, \textit{e.g.}, \cite{msw07}). An
illustration of these transforms for different values of $t$ and $q$ is shown in the inclosed figures, where the original sequence is shown in blue together with its first (green), second (yellow) and third (red) Richardson transforms. We do this for varying $t$ (the horizontal axis) but also varying $q$, \textit{i.e.}, each black dot is actually several overlapping black dots, each one the third Richardson transform of the numerical sequence for the instanton action, for that particular value of $t$ and for a range of different values of $q$. Then figure~\ref{fig:conifold_LOOPS} tests the validity of \eqref{polyname0} through \eqref{polyname3} (in fact up to $h=5$), this time around for fixed $q$. Each inverted triangle in the plot is again the third Richardson transform of the tested sequence. All these plots very cleanly illustrate the validity of \eqref{eq:GW_coni_large_order}.

%%%%%%%%%%%%%%%%%%%%%%%%%%%%%%%%%%%%%%%%%%%%%%%%%%%%%%%%%%%%%%%%%
\begin{figure}[t!]
\centering
\includegraphics[width=0.55\textwidth]{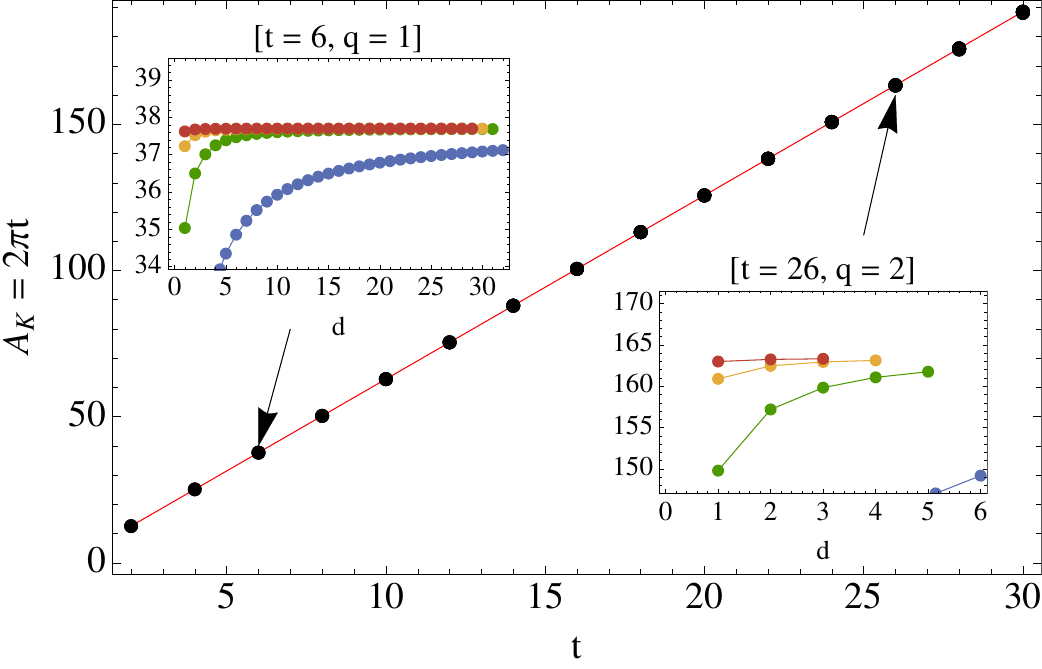}
\caption{Test of the instanton action $A_{\text{K}} = 2\pi t$, from the K\"ahler saddle-point, for the resolved conifold. The inclosed plots show the convergence for a couple of values of $t$ and $\sh$.}
\label{fig:conifold_Kahler_Action}
\end{figure}
%%%%%%%%%%%%%%%%%%%%%%%%%%%%%%%%%%%%%%%%%%%%%%%%%%%%%%%%%%%%%%%%%

%%%%%%%%%%%%%%%%%%%%%%%%%%%%%%%%%%%%%%%%%%%%%%%%%%%%%%%%%%%%%%%%%
\begin{figure}[t!]
\centering
\includegraphics[width=0.7\textwidth]{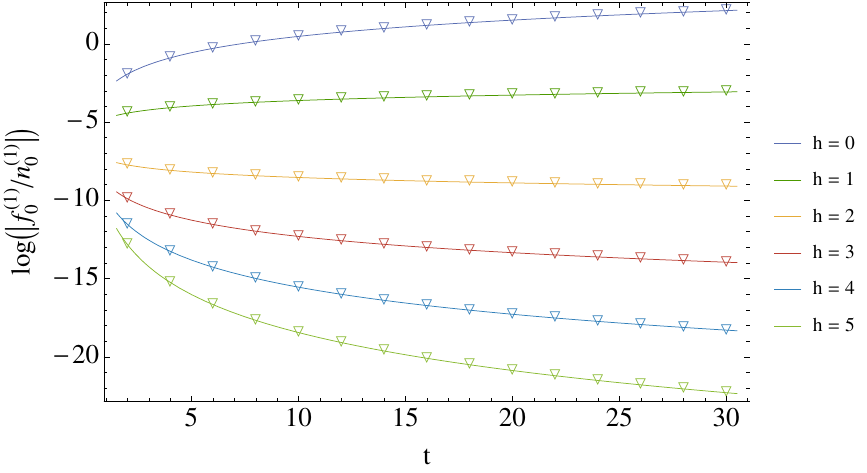}
\caption{Numerical check of the loop-corrections $f^{(1)}_h := \frac{n^{(1)}_0 t^{\frac{3}{2}-h}}{2^{2h+1} \pi^{h+2}}\, \CP_h(\sh)$, for $h=0,\ldots,5$ and $\sh=1$, for the resolved conifold. We plot the logarithm of the ratio $f^{(1)}_0/n^{(1)}_0$ so that all curves fit within the same graph (and where the GV invariant is $n^{(1)}_0 = 1$).}
\label{fig:conifold_LOOPS}
\end{figure}
%%%%%%%%%%%%%%%%%%%%%%%%%%%%%%%%%%%%%%%%%%%%%%%%%%%%%%%%%%%%%%%%%

In section~\ref{sec:stage} we showed how to relate GW asymptotics to free-energy instanton sectors, in particular relating the first term in the right-hand side of \eqref{eq:GW_coni_large_order} with the one-loop one-instanton free energy $F^{(1)}_0$; see \eqref{eq:GW_saddle_point_growth} and the discussion which follows. Ideally, one would now like to do the same for the multi-loop (eventually multi-instanton) one-instanton free energies and their relation to higher terms in \eqref{eq:GW_coni_large_order}. Unfortunately, already finding a direct relation at two-loops, between $F^{(1)}_1$ and any higher term in \eqref{eq:GW_coni_large_order}, turns out not to be possible using the saddle-point approximation from section~\ref{sec:stage}. In fact, our saddle-point approach is non-standard, in the sense that the saddle-point itself grows with $d$ (or $g$), which essentially obscures a clear-cut relation between free-energy asymptotics and GW asymptotics beyond the first term. For the present example of the resolved conifold we can bypass this problem, working directly with the explicit form of the GW invariants, but this will not be possible for more complicated examples.

Let us end our discussion of the resolved conifold by going back to our diagram \eqref{eq:resurgene_Qexpansion_diag}. As we mentioned earlier, one cannot find nonperturbative GW invariants via a $Q$-expansion of the resurgent asymptotic expansion for the perturbative free energy $F^{(0)}_g$. This is already clear in equation \eqref{eq:resconifold_F0g_F10}, where, although the left-hand side does have a regular expansion around $Q=0$ from which one reads the GW invariants (this is just the statement that the left-hand side is a regular generating function), the same does not hold true for the right-hand side, where one finds a logarithmic singularity at that same point (recall that the instanton action is proportional to $t$). In other words, the ``resurgence rewriting'' of the perturbative free energies, $F^{(0)}_g(Q)$, as an asymptotic series in $1/g$ does not respect, term by term, a regular $Q$-expansion. Only when we consider all corrections in $1/g$ and perform their resummation (yielding the polylogarithm, in this case of the resolved conifold) can we recover regularity at $Q=0$. Looking directly at the resurgent GW expansion \eqref{eq:GW_coni_large_order}, one also sees how the right-hand side has a non-regular $Q$-dependence through $t=-\log Q$. Although the possibility remains that there might be a better variable than $Q$ or $t$ to establish the match against nonperturbative GW invariants, it may also be the case that there is no such variable and reading nonperturbative GW invariants (naturally formulated using a $Q$-expansion) from resurgence expressions (naturally written using the $t$ variable) is in fact a  nontrivial problem which might require some \textit{a priori} enumerative interpretation to know what to look for. Perhaps the fact that the polynomials \eqref{polyname0} through \eqref{polyname3}, appearing in the resurgence relation \eqref{eq:GW_coni_large_order}, have rational coefficients much like the GW invariants themselves, is a clue in that direction.

%%%%%%%%%%%%%%%%%%%%%%%%%%%%%%%%%%%%%%%%%%%%%%%%%%%%%%%%%%%%%%%%%
%%%%%%%%%%%%%%%%%%%%%%%%%%%%%%%%%%%%%%%%%%%%%%%%%%%%%%%%%%%%%%%%%
\section{Computational Explorations in Calabi--Yau Threefolds}\label{sec:examples}
%%%%%%%%%%%%%%%%%%%%%%%%%%%%%%%%%%%%%%%%%%%%%%%%%%%%%%%%%%%%%%%%%
%%%%%%%%%%%%%%%%%%%%%%%%%%%%%%%%%%%%%%%%%%%%%%%%%%%%%%%%%%%%%%%%%

We shall now move on towards non-trivial geometries, for which there are no closed-form expressions for enumerative GW invariants. We shall instead resort to computational methods in order to explore their asymptotics and resurgent structures.

%%%%%%%%%%%%%%%%%%%%%%%%%%%%%%%%%%%%%%%%%%%%%%%%%%%%%%%%%%%%%%%%%
\subsection{The Example of Local $\BP^2$}\label{sec:P2}
%%%%%%%%%%%%%%%%%%%%%%%%%%%%%%%%%%%%%%%%%%%%%%%%%%%%%%%%%%%%%%%%%

Our first non-trivial example will be a local-surface toric-variety. We start with the non-compact CY threefold known as local $\BP^2$, which is the total space of the line bundle $\CO (-3) \to \BP^2$. This example of local $\BP^2$ has a single complex modulus $z$, and a mirror map of the schematic form $Q = \rme^{-t} = \CO (z)$, which eventually allows for a calculation of GW invariants \cite{ckyz99, kz99} (the resulting K\"ahler modulus being the size of the $\BP^2$). In fact, the large-order data for the resurgence analysis first arises in the B-model and will thus require translation into A-model expressions. Specifically, the high genus GW invariants for local $\BP^2$ will come out of B-model calculations, both perturbative \cite{hkr08} and nonperturbative \cite{cesv13, cesv14}, followed by mirror symmetry \cite{hv00}.

%%%%%%%%%%%%%%%%%%%%%%%%%%%%%%%%%%%%%%%%%%%%%%%%%%%%%%%%%%%%%%%%%
\subsubsection*{Free Energies and Gromov--Witten Invariants}
%%%%%%%%%%%%%%%%%%%%%%%%%%%%%%%%%%%%%%%%%%%%%%%%%%%%%%%%%%%%%%%%%

The perturbative free energies are best computed within the B-model using the holomorphic anomaly equations, which are recurrence relations in the genus \cite{bcov93, bcov93b}. The GW invariants are then extracted using the mirror map back to the A-model, and removing the anti-holomorphic dependence (in $\bar{z}$) which is introduced by this computation. We shall not get into any details, which may be found in \cite{hkr08}, but it is perhaps worth mentioning the Picard--Fuchs equation. Its solutions, the periods, are the source to the genus-zero free energy, the mirror map, and also the instanton actions \cite{dmp11}. For local $\BP^2$, the Picard--Fuchs equation is
\begin{equation}
\left\{ \left( z \p_z \right)^3 + 3z^2\, \p_z \left( 3 z \p_z + 1 \right) \left( 3 z \p_z  + 2 \right) \right\} f(z) = 0.
\end{equation}
\noindent
One of its three independent solutions is a constant. Another one, having a $\log z$ singularity, can be identified as the mirror map,
\begin{equation}
\log Q = - t = \log z - 6z + 45z^2 - 560z^3 + \cdots.
\end{equation}
\noindent
The last solution, having a $\log^2 z$ singularity, can be associated to $\p_t F^{(0)}_0$. Upon integration of this last solution, and use of the mirror map, one finds the genus-zero free energy as
\begin{equation}
F^{(0)}_0 = c_3 t^3 + c_2 t^2 + c_1 t + 3 Q - \frac{45}{8} Q^2 + \frac{244}{9} Q^3 + \cdots.
\end{equation}
\noindent
One can ignore the coefficients $c_i$ and then read the GW invariants, $N_{0,d}$, from this $Q$-series.

Within the B-model, the higher-genus free energies\footnote{Note that the genus-one free energy is calculated separately (see \cite{bcov93b} for details), and further has a direct relation to the propagator; namely $\p_z F^{(1)}_0 = \frac{1}{2} C_{zzz} S^{zz}$, where $C_{zzz} =  \left( -3z^3 \left(1+27z\right) \right)^{-1}$ is the Yukawa coupling computed out of the Picard--Fuchs equation.}, $F^{(0)}_g$, may be compactly written as polynomials in $z$ and $S^{zz}(z,\bar{z})$, an auxiliary variable called the propagator \cite{yy04}. To extract higher-genus GW invariants one has to use the holomorphic limit of the propagator $S^{zz}$ (in the large-radius frame),
\begin{equation}
S^{zz}_{\text{hol}, [\text{LR}]} = \frac{1}{2} Q^2 + 15 Q^3 + 135 Q^4 + \cdots.
\label{eq:localP2_SzzholLR_Qseries}
\end{equation}
\noindent
Consider for example $F^{(0)}_2 (z,S^{zz})$, which follows from the holomorphic anomaly equations as
\begin{equation}
F^{(0)}_2 = \left( - \frac{1}{3z^3 \left(1+27z\right)}\right)^2 \left( \frac{5}{24} \left( S^{zz} \right)^3 - \frac{3z^2}{16} \left( S^{zz} \right)^2 + \frac{z^4}{16}\, S^{zz} - \frac{\left(11-162z-729z^2\right)z^6}{1920} \right) -\frac{1}{1920}.
\end{equation}
\noindent
Taking the holomorphic limit and using the mirror map, $z = z(Q)$, one then obtains the A-model result 
\begin{equation}
F^{(0)}_2 = \frac{1}{80} Q + \frac{3}{20} Q^3 - \frac{514}{5} Q^4 + \cdots.
\end{equation}
\noindent
Here, the coefficients of the $Q$-expansion are the $N_{2,d}$ GW invariants. In this way, the holomorphic anomaly equations systematically compute $F^{(0)}_g$, out of $F^{(0)}_h$ with $h=1,\ldots,g-1$, and from them one extracts the $N_{g,d}$ GW invariants as described above. An illustrative (\textit{i.e.}, partial) table of GW invariants for local $\BP^2$ may be found in appendix~\ref{sec:appendix:localP2}. In figure~\ref{fig:GWtotalP2} we schematically represent all the GW invariants we have computed and work with in the present paper.

%%%%%%%%%%%%%%%%%%%%%%%%%%%%%%%%%%%%%%%%%%%%%%%%%%%%%%%%%%%%%%%%%
\begin{figure}[t!]
\begin{center}
\raisebox{0.5cm}{\includegraphics[width=0.5\textwidth]{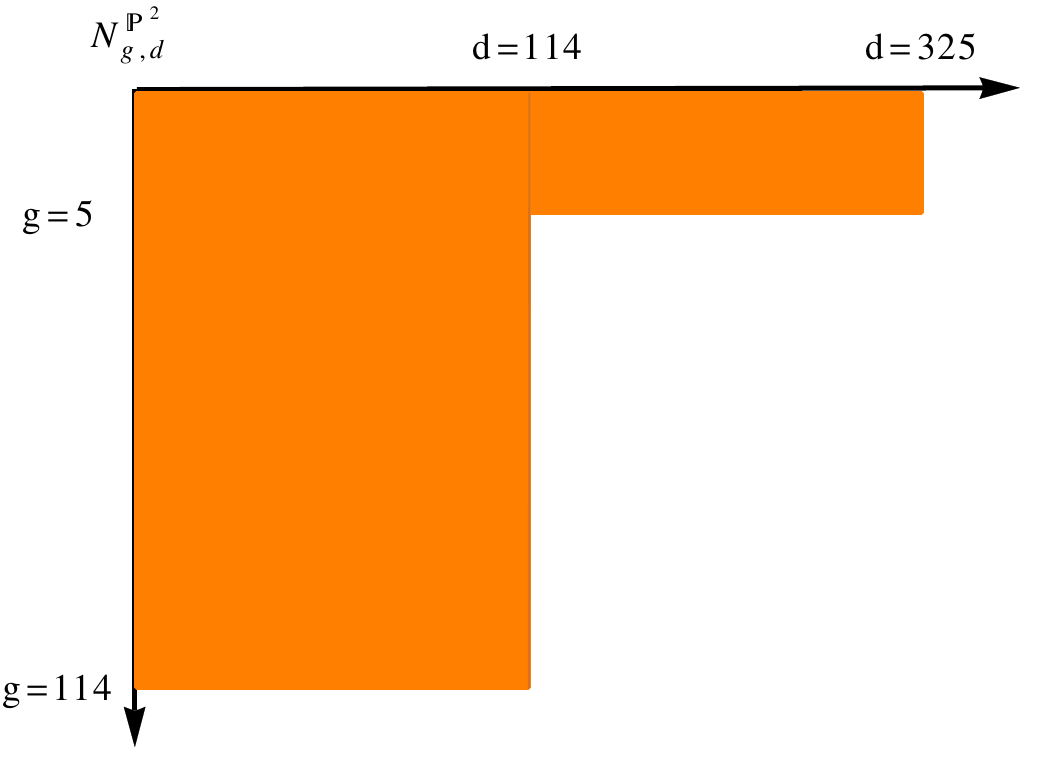}}
\end{center}
\vspace{-1\baselineskip}
\caption{Maximum degree and genus of the GW invariants we computed for local~$\BP^2$.}
\label{fig:GWtotalP2}
\end{figure}
%%%%%%%%%%%%%%%%%%%%%%%%%%%%%%%%%%%%%%%%%%%%%%%%%%%%%%%%%%%%%%%%%

As studied in great detail in \cite{cesv13, cesv14} the free energies for local $\BP^2$, $F^{(0)}_g$, grow factorially fast and render the free-energy expansion asymptotic. The resurgent structure which was uncovered in those references may be summarized as follows.  There are several instanton actions, labelled by $A_1$, $A_2$, $A_3$ and $A_{\text{K}}$, which give rise to corresponding nonperturbative sectors within the total free-energy transseries. Out of these, two actions are leading at large-order, these are $A_1$ and $A_{\text{K}}$, meaning that for some values of the complex-structure modulus $z$ they are the actions controlling the leading growth of the $F^{(0)}_g$. Around the large-radius point in moduli space, $z=0$, it is $A_{\text{K}} = 2\pi t(z)$ which is leading, and elsewhere it is $A_1 = \frac{2\pi\rmi}{\sqrt{3}}\, T_{\text{c}} (z)$, where $T_{\text{c}} (z) = 12 \sqrt{3} \pi^2 \rmi\, \p_t F^{(0)}_0$ is the flat coordinate\footnote{References \cite{cesv13, cesv14} used the notation $t_{\text{c}}$ for this flat coordinate. Herein we use $T_{\text{c}}$ instead so as not to clash with our conifold critical point.} around the conifold point $z= -1/27$. For obvious reasons, we name $A_{\text{K}}$ as the K\"ahler action and $A_1$ as the conifold action (in fact also $A_2$ and $A_3$ are related to the conifold point, but they will not play any role in the present paper). 

In this case, the large-order growth of the free energies may be either
\begin{equation}
F^{(0)}_g \sim \frac{\Gamma \left(2g-1\right)}{A_1^{2g-1}}\, F^{(1)[\text{c}]}_0 \qquad \text{or} \qquad F^{(0)}_g \sim \frac{\Gamma \left(2g-1\right)}{A_{\text{K}}^{2g-1}}\, F^{(1)[\text{K}]}_0,
\end{equation}
\noindent
depending on the value of $Q$. The one-loop one-instanton coefficients are computed from an extension of the holomorphic anomaly equations, alongside the above resurgent relations  (which were needed in order to fix the holomorphic anomaly). They are part of the B-model transseries, and one finds \cite{cesv14}
\begin{equation}
F^{(1)[\text{c}]}_0 = \frac{A_1}{2\pi}\, \rme^{\frac{1}{2} \left(\p_z A_1\right)^2 \left( S^{zz}_{\text{hol},[\text{LR}]} - S^{zz}_{\text{hol},[\text{c}]} \right)} \qquad \text{and} \qquad F^{(1)[\text{K}]}_0 = \frac{3 A_{\text{K}}}{2\pi^2}.
\label{eq:localP2_F10s}
\end{equation}
\noindent
The left expression involves $S^{zz}_{\text{hol},[\text{LR}]}$, whose $Q$-expansion was written in \eqref{eq:localP2_SzzholLR_Qseries}, but it also involves the holomorphic limit of the propagator in the conifold frame, $S^{zz}_{\text{hol},[\text{c}]}$ (see \cite{cesv14}). It is interesting to note how the expression on the right of \eqref{eq:localP2_F10s} is actually equivalent, up to a factor of $3=n_0^{(1)}$, to the one we computed earlier for the resolved conifold.

Being part of the B-model transseries, one may feel tempted to use the mirror map and write these nonperturbative expressions in the A-model, hoping for regular generating functions of our would-be nonperturbative invariants. Unfortunately, as already explained earlier, their $Q$-expansions are explicitly non-regular
\bea
F^{(1)[\text{c}]}_0 &=& \frac{\rmi \left(-Q\right)^{\frac{3}{2}}}{4\pi} \left( \left( \log(Q) - \rmi\pi \right)^2-\pi^2 - 18 Q + \frac{135}{2} Q^2 + \cdots \right) \left( 1 - \frac{27}{2} Q + \frac{1539}{8} Q^2 + \cdots \right), \\
F^{(1)[\text{K}]}_0 &=& - \frac{3}{\pi} \log Q.
\eea
\noindent
This implies that the A-model transseries, obtained via mirror map from the B-model transseries, is not a regular generating function and one has to dig deeper in order to try to understand the counting problem associated to the conjectured quantities ``$N^{(n)}_{g,d}$''.

%%%%%%%%%%%%%%%%%%%%%%%%%%%%%%%%%%%%%%%%%%%%%%%%%%%%%%%%%%%%%%%%%
\subsubsection*{Analysis of Large-Degree Growth}
%%%%%%%%%%%%%%%%%%%%%%%%%%%%%%%%%%%%%%%%%%%%%%%%%%%%%%%%%%%%%%%%%

At fixed genus, the GW invariants grow exponentially in the degree as
\begin{equation}
N_{\boldsymbol{g},d} \sim c_{\boldsymbol{g}}\, d^{2\boldsymbol{g}-3}\, \rme^{d t_{\text{c}}} \left( \log d \right)^\delta,
\label{eq:extraLOGS}
\end{equation}
where $t_{\text{c}} := t(z=-1/27) = 2.90759\ldots - \rmi\pi$ \cite{agm93, kz99}. Figure~\ref{fig:localP2_large_degree_tc} shows a numerical verification of this value for $t_{\text{c}}$. The exponent $2\boldsymbol{g}-3$ of the degree $d$ may be verified numerically from the following large-$d$ sequence
\begin{equation}
f_d - 2 f_{d^2} \sim 2\boldsymbol{g}-3, \qquad \text{where} \qquad f_d := d \left( \rme^{t_{\text{c}}}\, \frac{N_{\boldsymbol{g},d+1}}{N_{\boldsymbol{g},d}} - 1 \right).
\label{eq:localP2_fcombination}
\end{equation}
\noindent
Due to the presence of the $d^2$ factor, and the limit upon available data, the results are not as good as those for $t_{\text{c}}$. Nonetheless, this exponent may also be cleanly verified numerically, as it is shown in figure~\ref{fig:localP2_large_degree_2g-3}. Finally, in similar fashion, we can determine the power of the logarithm $\log d$ from the sequence 
\begin{equation}
2^\delta \sim \frac{\rme^{d \left(d-1\right) t_{\text{c}}}}{d^{2\boldsymbol{g}-3}}\, \frac{N_{\boldsymbol{g},d^2}}{N_{\boldsymbol{g},d}}.
\label{eq:localP2_gammacombination}
\end{equation}
\noindent
This is done in figure~\ref{fig:localP2_large_degree_2g-2}, where it is shown that this exponent may be well fitted to the expected $\delta = 2\boldsymbol{g}-2$. Unfortunately, our available data does not allow us to numerically compute the genus-dependent pre-factor $c_{\boldsymbol{g}}$ with enough accuracy as to present it here. 

%%%%%%%%%%%%%%%%%%%%%%%%%%%%%%%%%%%%%%%%%%%%%%%%%%%%%%%%%%%%%%%%%
\begin{figure}[t!]
\centering
\includegraphics[width=0.6\textwidth]{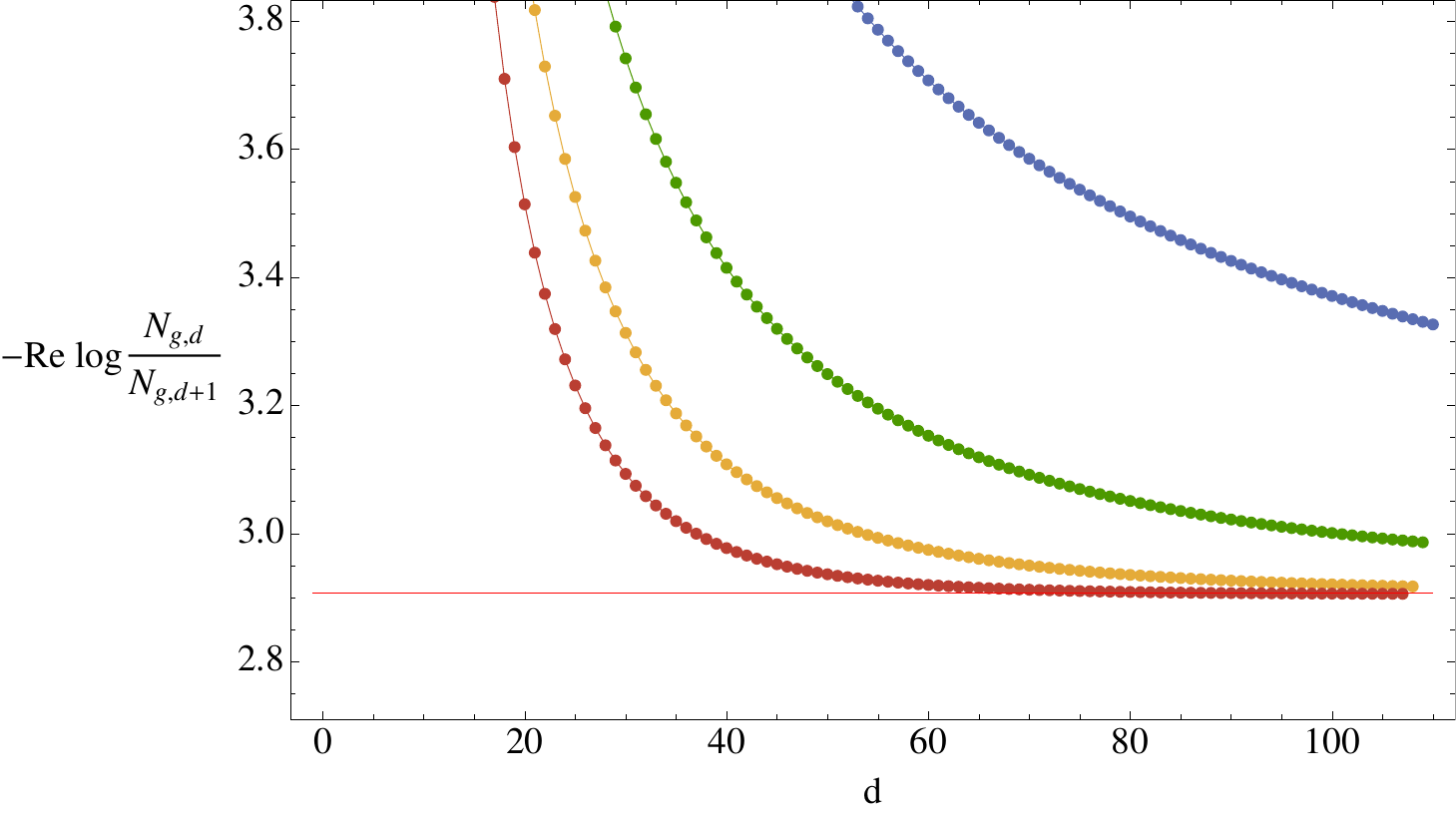}
\caption{Local $\BP^2$: The exponent $t_{\text{c}}$ in the growth of $N_{\boldsymbol{g},d}$ is captured from the ratio of two consecutive GW invariants, when the degree is large. We plot that ratio alongside three Richardson extrapolations, which are clearly converging faster towards the expected result (up to a numerical relative error of about $0.06\%$).}
\label{fig:localP2_large_degree_tc}
\end{figure}
%%%%%%%%%%%%%%%%%%%%%%%%%%%%%%%%%%%%%%%%%%%%%%%%%%%%%%%%%%%%%%%%%

%%%%%%%%%%%%%%%%%%%%%%%%%%%%%%%%%%%%%%%%%%%%%%%%%%%%%%%%%%%%%%%%%
\begin{figure}[t!]
\centering
\includegraphics[width=0.46\textwidth]{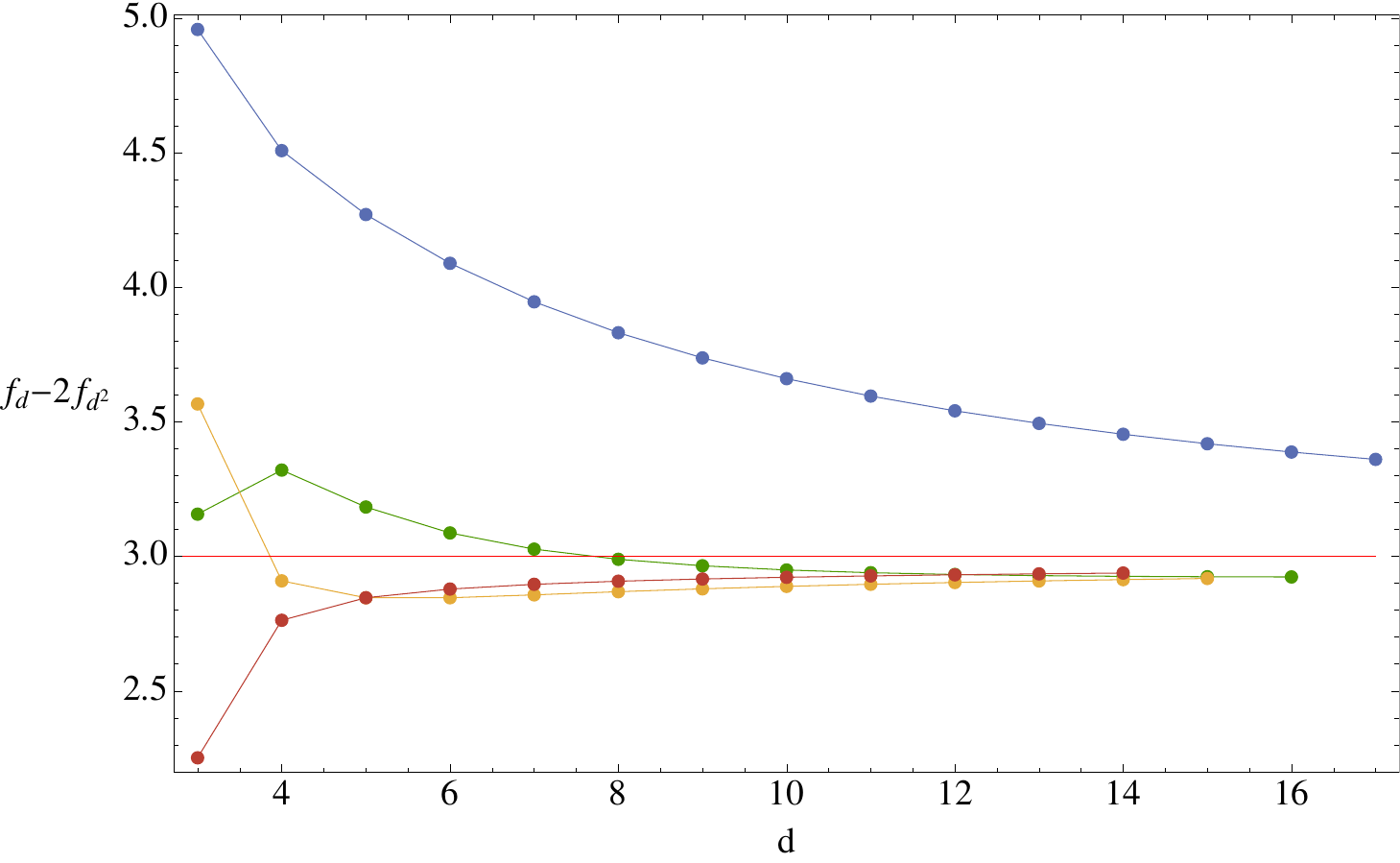}
\hspace{0.05\textwidth}
\includegraphics[width=0.46\textwidth]{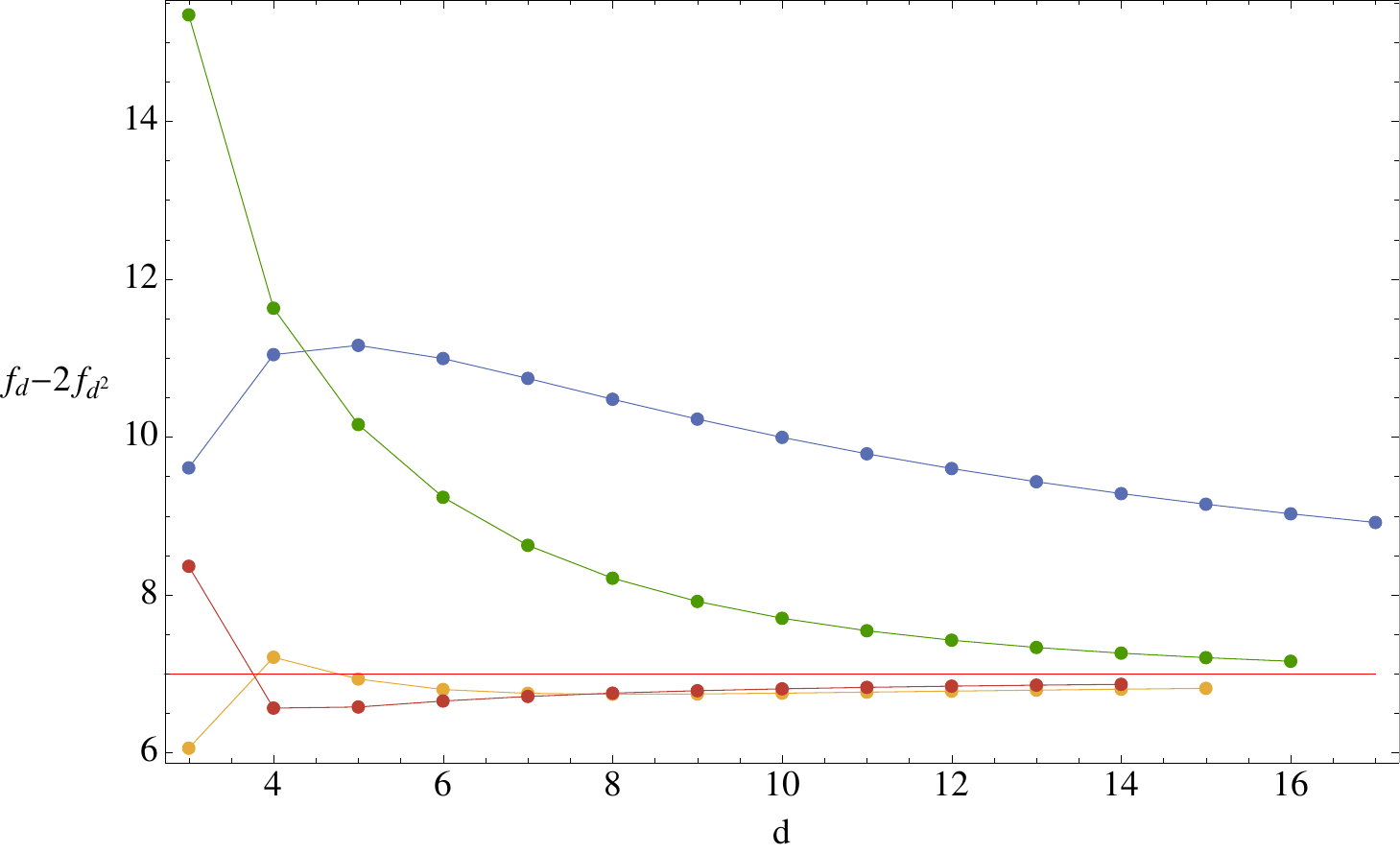}
\caption{Local $\BP^2$: The exponent $2\boldsymbol{g}-3$ is the leading large-order term in $f_d - 2 f_{d^2}$. We have data up to $d=325$ so that the horizontal axis can only reach $d = 17$. The plots illustrate the first few Richardson transforms for $g=3$ (left) and $g=5$ (right), converging faster towards the expected result (up to numerical relative errors of about $2\%$ in both cases). Notice how the presence of logarithms in \eqref{eq:extraLOGS} makes the convergence of Richardson transforms much slower.}
\label{fig:localP2_large_degree_2g-3}
\end{figure}
%%%%%%%%%%%%%%%%%%%%%%%%%%%%%%%%%%%%%%%%%%%%%%%%%%%%%%%%%%%%%%%%%

%%%%%%%%%%%%%%%%%%%%%%%%%%%%%%%%%%%%%%%%%%%%%%%%%%%%%%%%%%%%%%%%%
\begin{figure}[t!]
\centering
\includegraphics[width=0.46\textwidth]{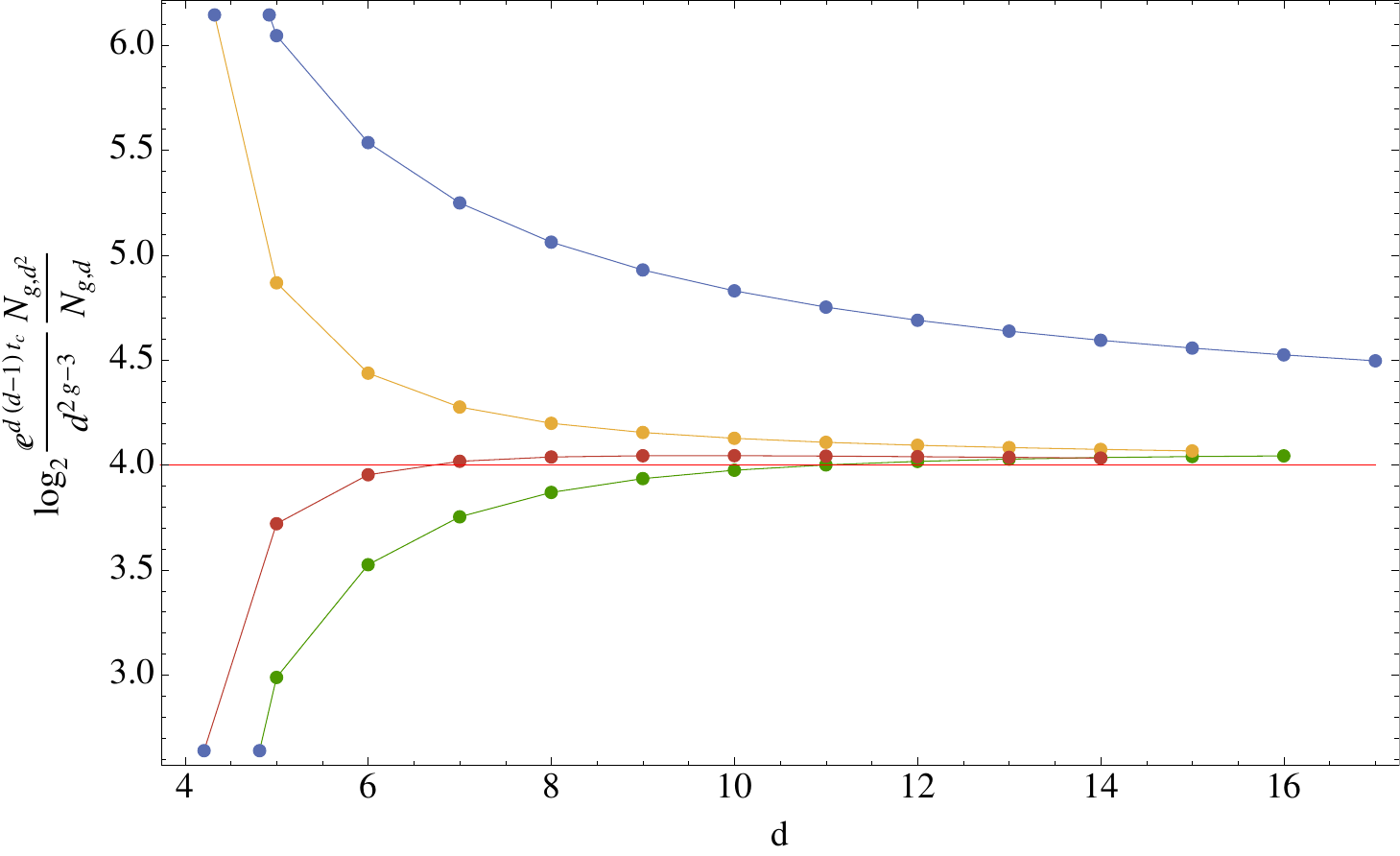}
\hspace{0.05\textwidth}
\includegraphics[width=0.46\textwidth]{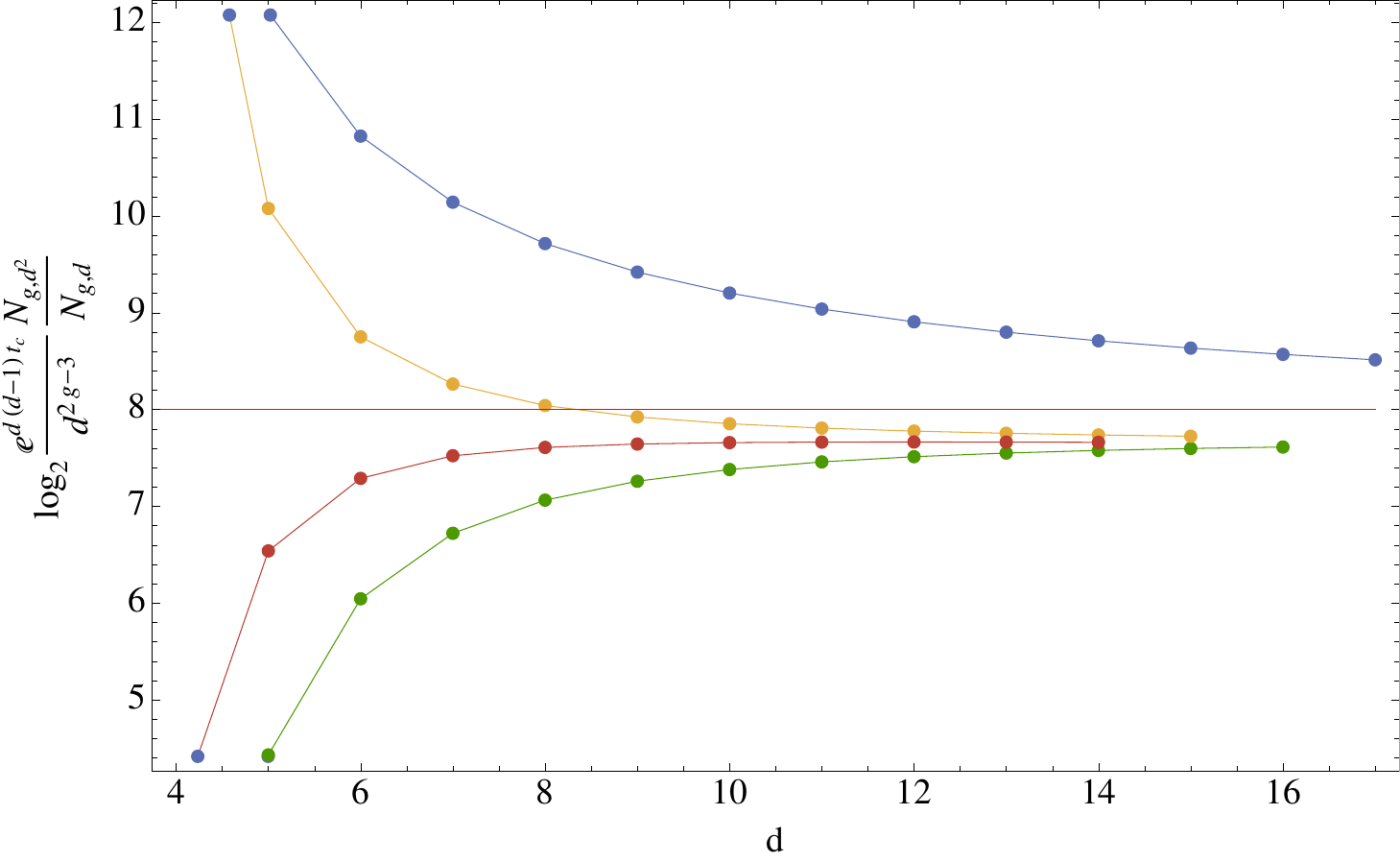}
\caption{Local $\BP^2$: The exponent $\delta$ of the logarithm $\log d$ is the leading term in the sequence \eqref{eq:localP2_gammacombination}. Having data up to $d=325$ implies the horizontal axis only reaches $d = 17$. We plot the first few Richardson transforms for $g=3$ (left) and $g=5$ (right), converging faster towards the expected result (up to small numerical relative errors of about $3\%$ and $4\%$, respectively).}
\label{fig:localP2_large_degree_2g-2}
\end{figure}
%%%%%%%%%%%%%%%%%%%%%%%%%%%%%%%%%%%%%%%%%%%%%%%%%%%%%%%%%%%%%%%%%

%%%%%%%%%%%%%%%%%%%%%%%%%%%%%%%%%%%%%%%%%%%%%%%%%%%%%%%%%%%%%%%%%
\subsubsection*{Analysis of Large-Genus Growth}
%%%%%%%%%%%%%%%%%%%%%%%%%%%%%%%%%%%%%%%%%%%%%%%%%%%%%%%%%%%%%%%%%

As explained in section~\ref{sec:stage}, the large-genus expansion is best expressed in terms of the coefficients $a_d$, $b_{d,n}$, and $c_d$, as in \eqref{eq:GW_abc}, which we reproduce in here
\begin{equation}
N^\tPTwo_{g,\boldsymbol{d}} = f^\tconi_g \left\{ \sum_{n|\boldsymbol{d}} a^\tPTwo_n \left( \frac{\boldsymbol{d}}{n} \right)^{2g-3} + \frac{2g}{B_{2g}}\, \frac{1}{\boldsymbol{d}} \left( c^\tPTwo_{\boldsymbol{d}}\, \delta_{g,1} + \sum_{n=1}^{G_\tPTwo(\boldsymbol{d})-1} b^\tPTwo_{\boldsymbol{d},n}\, n^{2g-2} \right) \right\},
\end{equation}
\noindent
and where, for this example, one explicitly has $G_\tPTwo(d)= (d-1)(d-2)/2$. A table with these first few coefficients is shown in appendix \ref{sec:appendix:localP2}. Recall that these are just convenient integer numbers which essentially capture the very same information as either GW or GV invariants. 

Some of these coefficients, $b_{d,n}$ with $n$ close to $G_\tPTwo(d)-1$, can be identified in closed form as
\begin{equation}
b^\tPTwo_{d,G_\tPTwo(d)-1-k} = p_{-3}(k)\, (-1)^d\, d \left( \left(d+1\right)\left(d+2\right)-2k \right), \qquad 0 \leq k \leq d-2,
\end{equation}
\noindent
where the $p_{-3}(k)$ are given by the generating function
\begin{equation}
\sum_{k=0}^{+\infty} p_{-3}(k)\, q^k = \prod_{m=1}^{+\infty} \frac{1}{\left(1-q^m\right)^3}.
\end{equation}
\noindent
For larger values of $k$ one can try to extend the above formula, at the cost of identifying similar coefficients to $p_{-3}(k)$. A conjectural partial formula for $b^\tPTwo_{d,n}$ is
\begin{equation}
b^\tPTwo_{d,G_\tPTwo(d)-1-k} \stackrel{?}{=} (-1)^d\, d\, \sum_{s=0}^{+\infty} \left( \alpha_{s,k-m_s d+n_s} \left(d+1-s\right)\left(d+2-s\right) - \beta_{s,k-m_s d+n_s} \right),
\end{equation}
\noindent
where
\begin{align}
m_0 = 0, \quad n_0 = 0, & \qquad \alpha_{0,n} = p_{-3}(n), \qquad \beta_{0,n} = 2n p_{-3}(n), \\
m_1 = 1, \quad n_1 = 1, & \qquad \sum_{n=0}^{+\infty} \alpha_{1,n}\, q^n = -3\, \frac{1+q+q^2}{1-q}\, \prod_{m=1}^{+\infty} \frac{1}{\left(1-q^m\right)^3}, \nonumber \\
& 
\hspace{-90pt}
-\beta_{1,n} = 0, 18, 144, 684, 2484, 7578, 20628, 51390, 119736, 263970, 556308, 1127880, 2212704, ?, \\
m_2 = 2, \quad n_2 = 4, & \qquad -\alpha_{2,n}= 6, 24, 72, 162, 315, ?, \qquad -\beta_{2,n} = 0, 36, 252, 1008, 3042, ?.
\end{align}
\noindent
This is as far as we were able to reach with the data we have available for local $\BP^2$.

%%%%%%%%%%%%%%%%%%%%%%%%%%%%%%%%%%%%%%%%%%%%%%%%%%%%%%%%%%%%%%%%%
\subsubsection*{Combined/Diagonal Large-Growth in Genus and Degree}
%%%%%%%%%%%%%%%%%%%%%%%%%%%%%%%%%%%%%%%%%%%%%%%%%%%%%%%%%%%%%%%%%

As discussed earlier in section~\ref{sec:stage}, and illustrated in figure~\ref{fig:leading_degrees}, local $\BP^2$ has two (different) growths of combined genus and degree. They are associated to the K\"ahler ($A_{\text{K}} = 2\pi t$) and conifold ($A_{\text{c}}$) instanton actions, and they are, respectively,
\begin{equation}
d = \frac{2g-3}{t} \qquad \text{and} \qquad d = a_0(Q) + a_1(Q)\, g.
\end{equation}
\noindent
Note that, as one varies $t$, the large-order growth of the free energies will be dominated by either $A_{\text{K}}$ or $A_{\text{c}}$, or a competition between both (see the analysis in \cite{cesv14}). The situation is slightly different with the GW large-order. Here, along \textit{any} diagonal one will find a factorial growth. However, from a resurgence standpoint, perhaps the most interesting diagonals are the ones which connect back to the resurgent structure of the free energies \cite{cesv13, cesv14}. For any chosen diagonal, this connection will exist every time there is a value of $t$ which realizes that chosen diagonal as one of the above (specific) slices. If such a value of $t$ exists, then the large-order growth of the enumerative invariants will be dominated by either $A_{\text{K}}$ or $A_{\text{c}}$ and the connection to the free energies is rather clean. If not, one will instead be upon a ``mixed'' diagonal with both $A_{\text{K}}$ and $A_{\text{c}}$ vying for dominance. Below we shall focus only upon the leading diagonals.

The first leading degree above was explored and justified analytically for the resolved conifold, and the main features which were found in that example remain in the present one. The second leading degree above depends on $t$ (or $Q$) through two \textit{unknown} functions, $a_0(Q)$ and $a_1(Q)$. At this stage, these functions may only be accessed via \textit{numerical} computations; and given limited data, with some significant limitations. In the following we shall summarize the resulting factorial growth of the GW invariants, along the leading diagonals of their $(g,d)$-table, and the relation of this growth with the resurgent structure of the topological-string free energy.

%%%%%%%%%%%%%%%%%%%%%%%%%%%%%%%%%%%%%%%%%%%%%%%%%%%%%%%%%%%%%%%%%
\subsubsection*{K\"ahler Leading Degree}
%%%%%%%%%%%%%%%%%%%%%%%%%%%%%%%%%%%%%%%%%%%%%%%%%%%%%%%%%%%%%%%%%

In this case, the only difference with respect to the resolved conifold turns out to be a simple multiplying factor, the GV invariant $n_0^{(1)} = 3$ of local $\BP^2$, in which case the analog of \eqref{eq:GW_coni_large_order} is now
\begin{equation}
\left. N^\tPTwo_{g,d}\, Q^d \right|_{g=\frac{t}{2}d+\sh} \sim \sum_{h=0}^{+\infty} \frac{\Gamma \left(2g-\frac{3}{2}-h\right)}{A_{\text{K}}^{2g-\frac{3}{2}-h}}\, \frac{n_0^{(1)}\, t^{\frac{3}{2}-h}}{2^{2h+1}\, \pi^{h+2}}\, \polyname_h(\sh).
\label{eq:GW_localP2_large_order_Kahler}
\end{equation}
\noindent
The polynomials $\polyname_h(\sh)$ are precisely the \textit{same} as in \eqref{eq:CP_h_polynomials}, and the integer $\sh$ is introduced to make both $g$ and $d$ integer; see the discussion around equation \eqref{eq:def_p}. 

Computational tests on the validity of \eqref{eq:GW_localP2_large_order_Kahler} are shown in figures~\ref{fig:ALL_Kahler_Action} and~\ref{fig:ALL_LOOPS}; with figure~\ref{fig:ALL_Kahler_Action} testing the K\"ahler instanton action and figure~\ref{fig:ALL_LOOPS} testing the (universal) validity of the polynomials $\polyname_h(\sh)$ for $h=0$ through $5$. The precise nature of these computational tests is exactly the same as we did earlier for the resolved conifold, and we refer to that discussion for further details.

%%%%%%%%%%%%%%%%%%%%%%%%%%%%%%%%%%%%%%%%%%%%%%%%%%%%%%%%%%%%%%%%%
\begin{figure}[t!]
\centering
\includegraphics[width=0.45\textwidth]{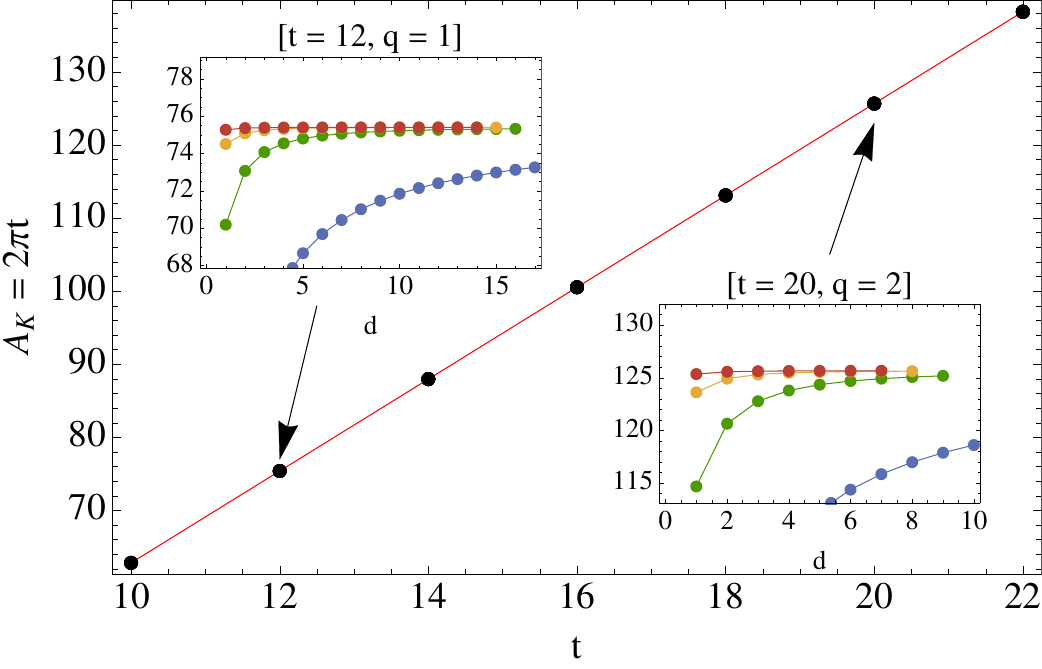}
\hspace{0.05\textwidth}
\includegraphics[width=0.45\textwidth]{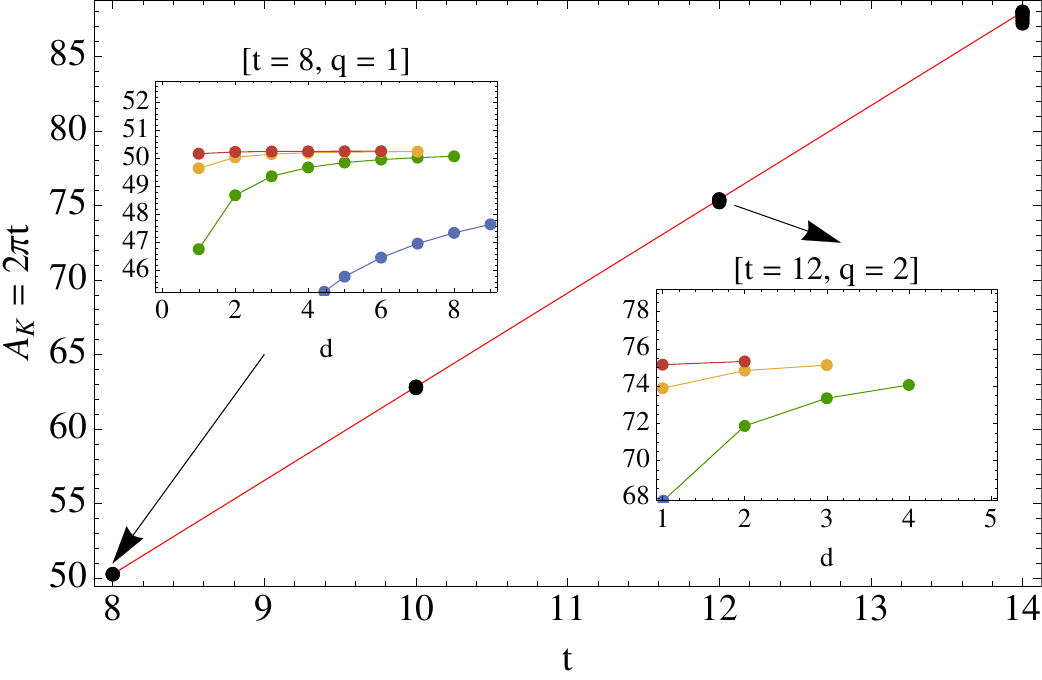}
\includegraphics[width=0.45\textwidth]{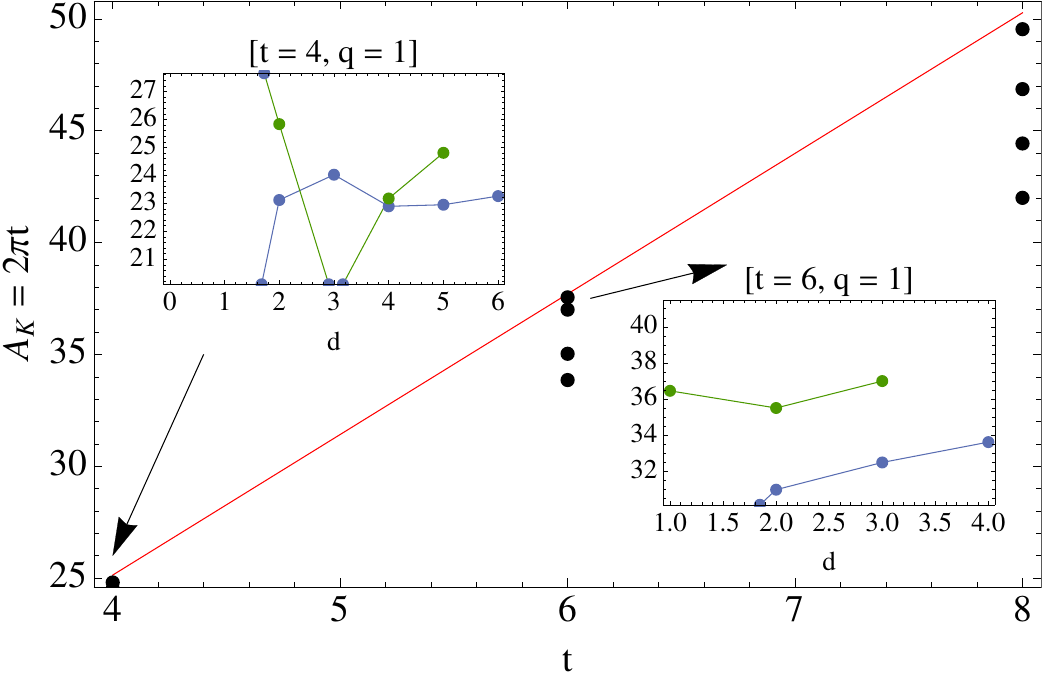}
\hspace{0.05\textwidth}
\includegraphics[width=0.45\textwidth]{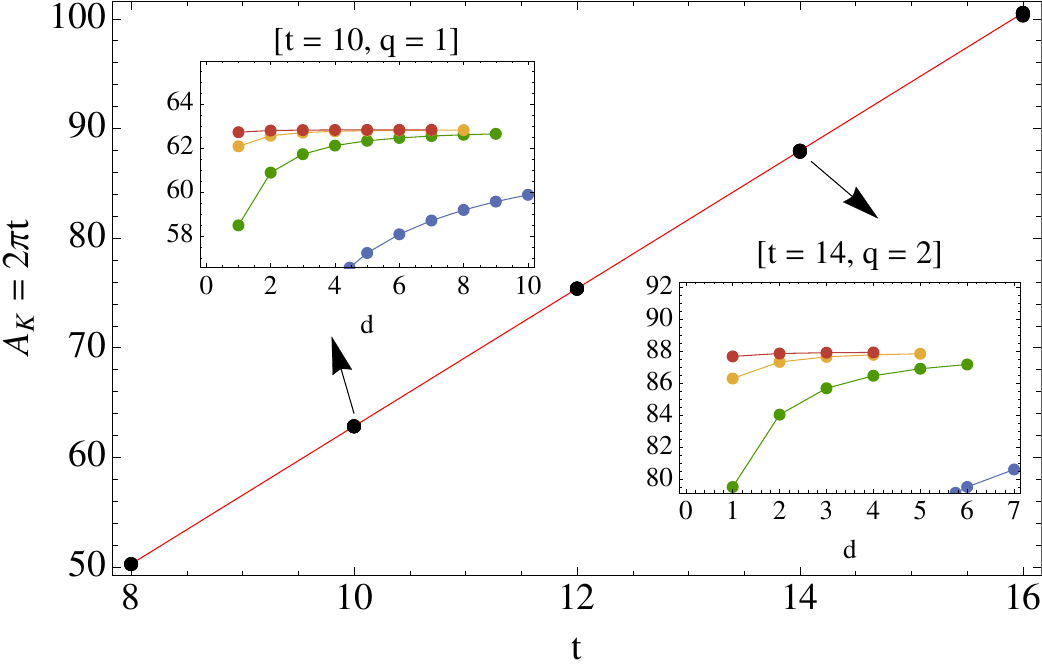}
\caption{Tests of the instanton action $A_{\text{K}} = 2\pi t$, from the K\"ahler saddle-point, for local $\BP^2$, ABJM, the quintic, and the local curve $X_3$ (from left to right and top to bottom). The inclosed plots show the convergence for a couple of values of $t$ and $\sh$. Note that for the quintic we do not have enough data to guarantee reaching a limit where all black dots overlap.}
\label{fig:ALL_Kahler_Action}
\end{figure}
%%%%%%%%%%%%%%%%%%%%%%%%%%%%%%%%%%%%%%%%%%%%%%%%%%%%%%%%%%%%%%%%%

%%%%%%%%%%%%%%%%%%%%%%%%%%%%%%%%%%%%%%%%%%%%%%%%%%%%%%%%%%%%%%%%%
\begin{figure}[t!]
\centering
\includegraphics[width=0.7\textwidth]{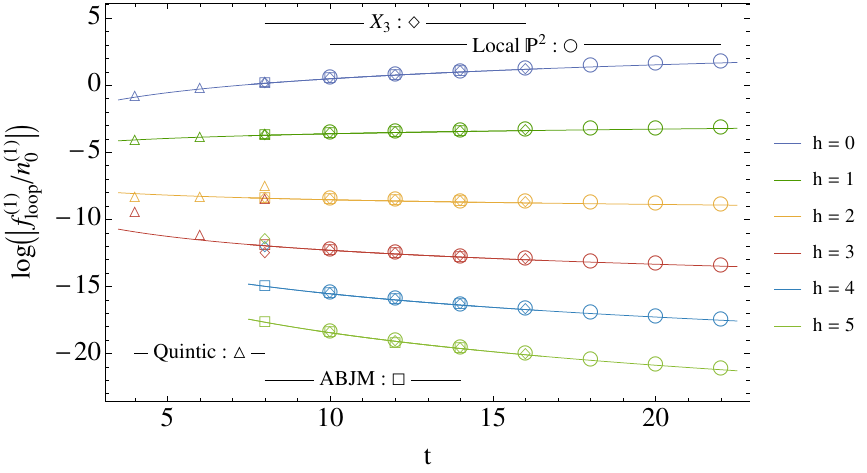}
\caption{Numerical checks of the loop-corrections $f^{(1)}_h = \frac{n^{(1)}_0 t^{\frac{3}{2}-h}}{2^{2h+1} \pi^{h+2}}\, \CP_h(\sh)$, for $h=0,\ldots,5$ and $\sh=1$, for our several examples. We plot the logarithm of the ratio $f^{(1)}_0/n^{(1)}_0$ to have universal quantities which all fit within the same graph.}
\label{fig:ALL_LOOPS}
\end{figure}
%%%%%%%%%%%%%%%%%%%%%%%%%%%%%%%%%%%%%%%%%%%%%%%%%%%%%%%%%%%%%%%%%

This formula \eqref{eq:GW_localP2_large_order_Kahler}, when restricted to the first approximation $h=0$, reproduces the prediction from the saddle-point approximation which was explained around equation \eqref{eq:GW_saddle_point_growth} (and leading up to it). Indeed, for the case of K\"ahler leading degree, $a_0 = -3/t$, $a_1 = 2/t$, $a_2 = t$, and $F^{(1)}_0 = \frac{3 A_{\text{K}}}{2\pi^2}$. Using these values in equation \eqref{eq:GW_saddle_point_growth} (with $\beta = 1$) we precisely reproduce \eqref{eq:GW_localP2_large_order_Kahler} truncated to $h=0$. 
A numerical check of the values of $a_0$ and $a_1$ is shown in the upper plots of figure \ref{fig:localP2_1overa0a1}, for which we can nicely fit
\begin{alignat}{2}
a_0(Q)^{-1} &= \left(-0.65 \pm 0.09\right) + \left(-0.274 \pm 0.008\right) t, \qquad && r^2 = 0.945, \\
a_1(Q)^{-1} &= \left(0.038 \pm 0.005\right) + \left(0.4967 \pm 0.0004\right) t, \qquad && r^2 = 0.99995.
\end{alignat}
\noindent
The shift in $a_0(Q)$ is not very reliable, but the slope in $a_1$ is quite close to the expected value. 

This asymptotics arises from the $a_{d=1}$ contribution in the $abc$-expansion of the GW invariants in \eqref{eq:GW_abc}. Since $a_{d=1} = n_0^{(1)}$ one will always find the resolved-conifold asymptotics multiplied by this factor. 

%%%%%%%%%%%%%%%%%%%%%%%%%%%%%%%%%%%%%%%%%%%%%%%%%%%%%%%%%%%%%%%%%
\begin{figure}[t!]
\centering
\includegraphics[width=0.45\textwidth]{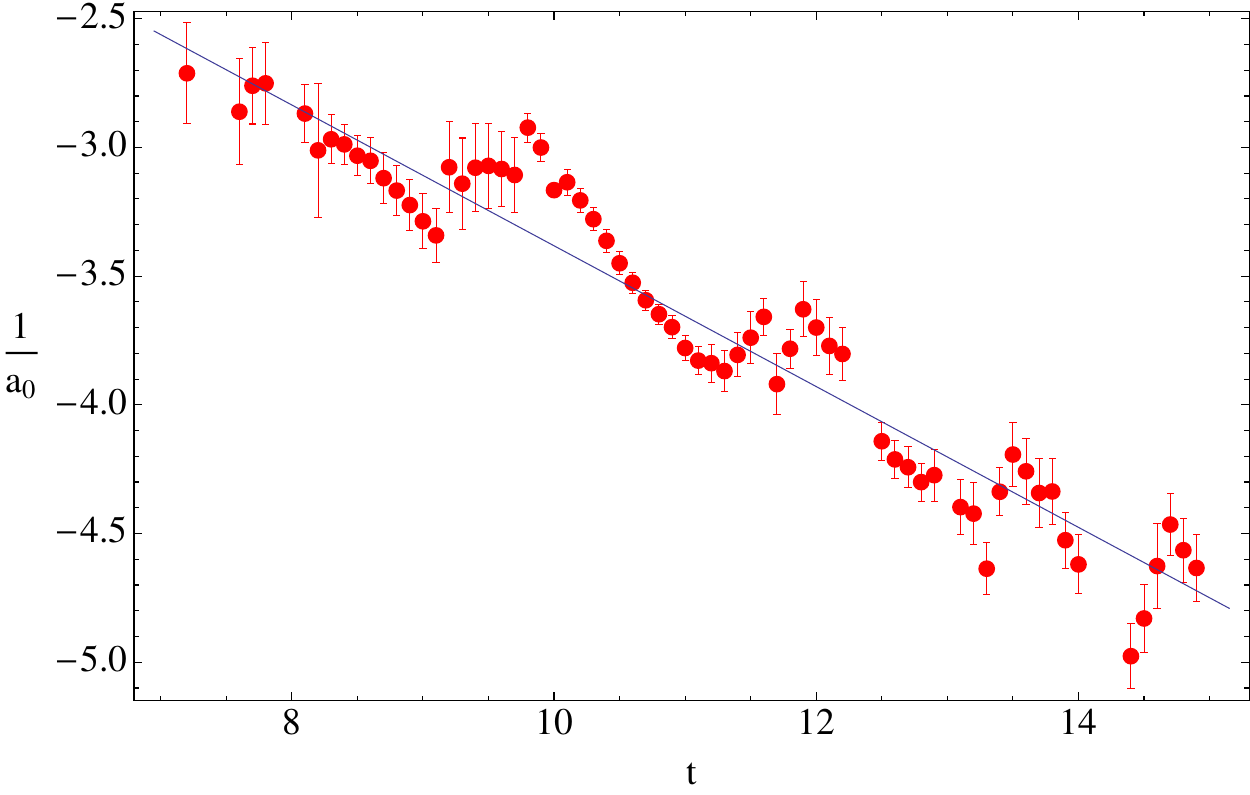}
\hspace{0.05\textwidth}
\includegraphics[width=0.45\textwidth]{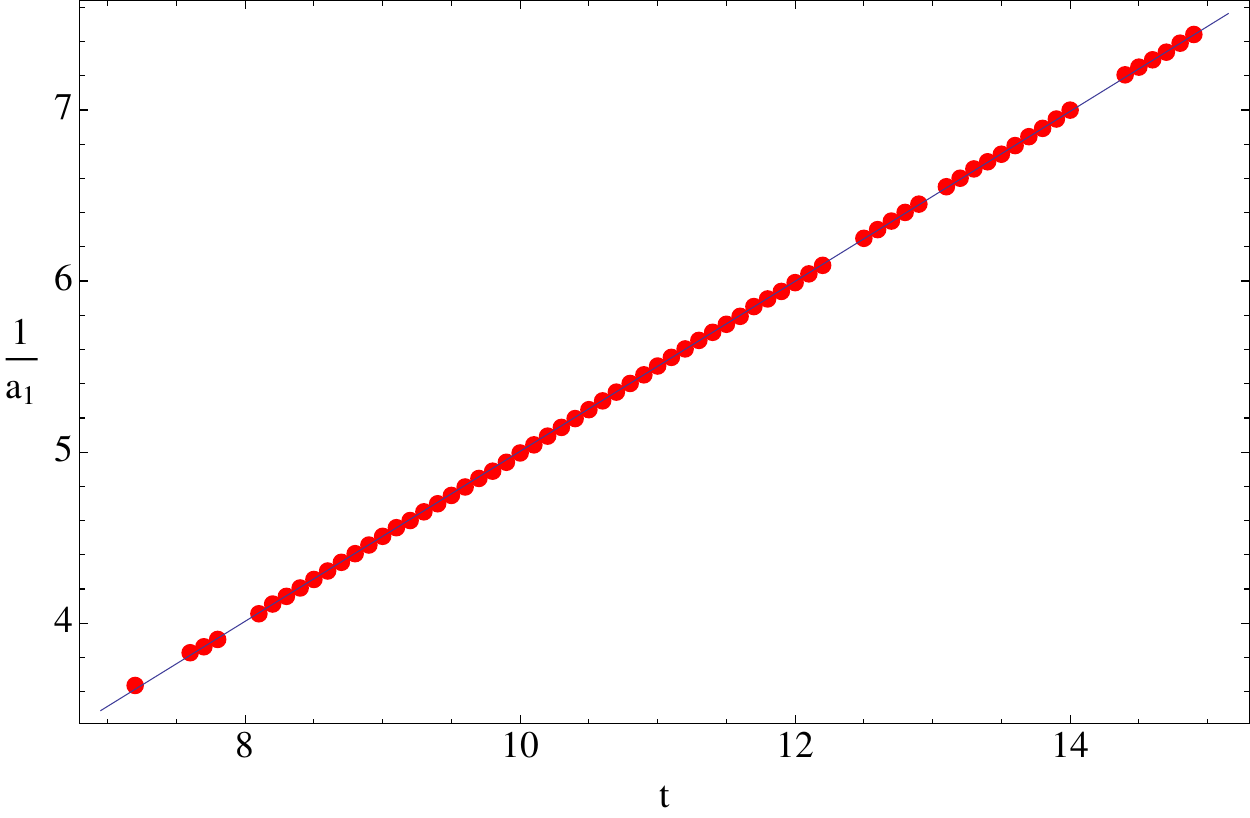}
\includegraphics[width=0.45\textwidth]{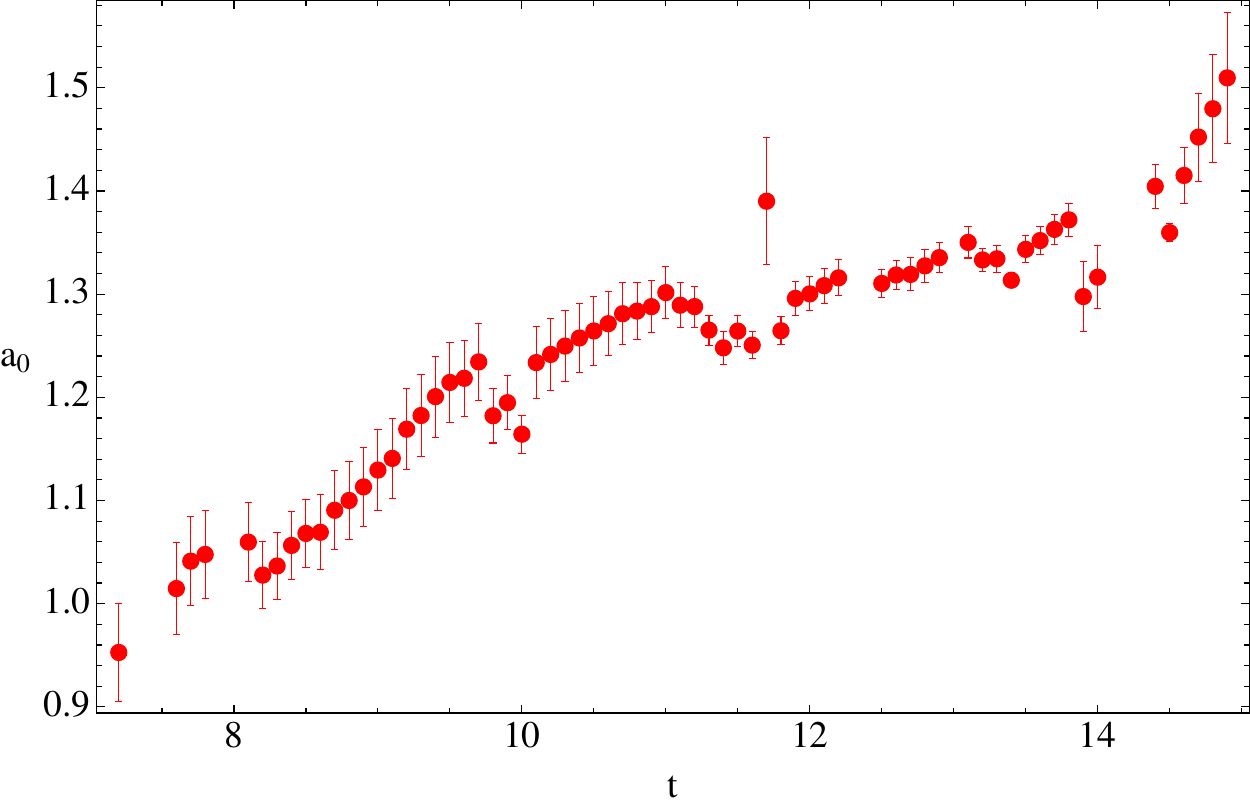}
\hspace{0.05\textwidth}
\includegraphics[width=0.45\textwidth]{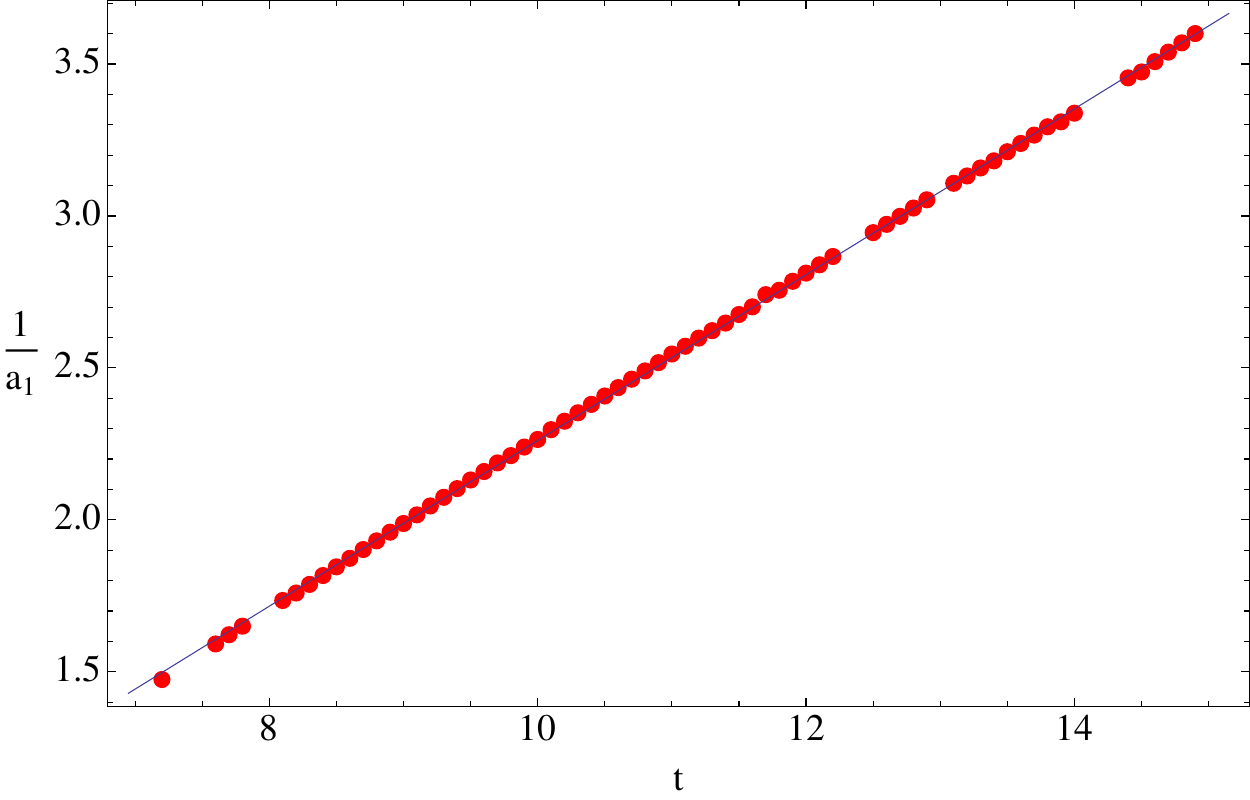}
\caption{Local $\BP^2$: Numerical calculation of $a_0(Q)$ and $a_1(Q)$ associated to the K\"ahler instanton action (the two upper plots) and to the conifold instanton action (the two lower plots). We are showing tests for their inverses whenever the dependence seems linear, although we were not able to confirm this analytically in the case of the conifold action. (We are only plotting reasonably trustworthy fits, thus the apparent holes in the plots.)}
\label{fig:localP2_1overa0a1}
\end{figure}
%%%%%%%%%%%%%%%%%%%%%%%%%%%%%%%%%%%%%%%%%%%%%%%%%%%%%%%%%%%%%%%%%

%%%%%%%%%%%%%%%%%%%%%%%%%%%%%%%%%%%%%%%%%%%%%%%%%%%%%%%%%%%%%%%%%
\subsubsection*{Conifold Leading Degree}
%%%%%%%%%%%%%%%%%%%%%%%%%%%%%%%%%%%%%%%%%%%%%%%%%%%%%%%%%%%%%%%%%

The second dominant degree, $d = a_0(Q) + a_1(Q)\, g$, is harder to analyze as everything must now be approached numerically; from the computation of $a_0(Q)$ and $a_1(Q)$ to the asymptotics.

The numerical fit to $1/a_1(Q)$ is shown in the lower-right plot of figure~\ref{fig:localP2_1overa0a1}. It is obtained from first fitting straight lines $d = \alpha\, g + \beta$ for different (fixed) values of $t$. Then fitting these results against a linear dependence in $t$ we have obtained
\be
a_1(Q)^{-1} = \left(-0.466 \pm 0.005\right) + \left(0.2728 \pm 0.0005\right) t, \qquad r^2 = 0.9998.
\ee
\noindent
On what concerns $a_0(Q)$, its numerical calculation is shown in the lower-left plot of figure~\ref{fig:localP2_1overa0a1}, but there is no obvious fit to do here (numerically, the dependence of $1/a_0(Q)$ does not seem to be linear in $t$, yielding a poor $r^2=0.849$, and this will become even more evident in following examples). At this moment we cannot provide an analytical interpretation for these numbers, or even guarantee that the fit to a straight line is justified since the interval in $t$ we have considered might be too small to be significant. Nonetheless, we do report the results.  

Because we now lack the precision we had along the K\"ahler leading degree, we cannot provide a systematic exploration of the GW asymptotics with conifold leading degree. We can, however, identify particular values of $t$ for which the exploration becomes simpler. One such point is found when $a_1(Q) = 1$, or $t \approx 5.6993\ldots$. In this case we explore the growth of $N_{g,g+\Delta}$ for some integer $\Delta \in \BZ$ (implicitly associated to $a_0(Q)$). The numerical exploration of this diagonal slice in the GW-table yields the result
\begin{equation}
N^\tPTwo_{g,g+\Delta} \sim \frac{\Gamma \left(2g-\frac{3}{2}\right)}{A_0^{2g-\frac{3}{2}}}\, \rme^{\alpha_0 + \alpha_1\, \Delta} \left(-1\right)^{\Delta+1},
  \label{eq:localP2_GW_main_diagonal}
\end{equation}
\noindent
where $A_0 \approx 0.655 995 \ldots$, and $\alpha_0$ and $\alpha_1$ are pure numbers which cannot be computed with much precision. The interesting point is that the value of $A_0$ can be matched to the saddle-point prediction involving the \textit{conifold} action,
\begin{equation}
A_1(Q)\, \sqrt{Q} = 0.655995043\ldots,
\end{equation}
\noindent
where the $\sqrt{Q}$ comes from including the factor $Q^d$ that multiplies $N^\tPTwo_{g,g+\Delta}$ in the saddle-point expression. On the other hand, the one-loop coefficient in \eqref{eq:localP2_GW_main_diagonal} should correspond to
\begin{equation}
\left( \frac{a_2}{\pi a_1 A_1} \right)^{1/2} F^{(1)[\text{c}]}_0,
\end{equation}
\noindent
but $a_2(Q)$ is directly related to the second derivative at the saddle point and thus it cannot be computed from first principles.

%%%%%%%%%%%%%%%%%%%%%%%%%%%%%%%%%%%%%%%%%%%%%%%%%%%%%%%%%%%%%%%%%
\subsection{The Example of Local $\BP^1 \times \BP^1$}\label{sec:P1xP1}
%%%%%%%%%%%%%%%%%%%%%%%%%%%%%%%%%%%%%%%%%%%%%%%%%%%%%%%%%%%%%%%%%

Let us next address another (toric) local surface, the non-compact CY threefold known as local $\BP^1 \times \BP^1$, which is the total space of the line bundle $\CO (-2,-2) \to \BP^1 \times \BP^1$. Generically, local $\BP^1 \times \BP^1$ has two complex structure moduli, $z_1$ and $z_2$, implying that the mirror map is similarly twofold, $Q_1 = \rme^{-t_1} = \CO (z_1)$ and $Q_2 = \rme^{-t_2} = \CO (z_2)$. In order to have reasonable large-order data for the resurgence analysis, in what follows we shall restrict to a slice of this variety where $z_1=z_2$.

%%%%%%%%%%%%%%%%%%%%%%%%%%%%%%%%%%%%%%%%%%%%%%%%%%%%%%%%%%%%%%%%%
\subsubsection*{Free Energies and Gromov--Witten Invariants}
%%%%%%%%%%%%%%%%%%%%%%%%%%%%%%%%%%%%%%%%%%%%%%%%%%%%%%%%%%%%%%%%%

Instead of working in the full two-dimensional moduli space, we shall restrict to the (simpler) one-dimensional \textit{diagonal slice} where the sizes of both $\BP^1$'s in the local $\BP^1\times\BP^1$ geometry are set to be \textit{equal}. One is thus left with a single modulus. The resulting such theory is closely related to a rather well-known gauge theory, called ABJM gauge theory \cite{abjm08} (see also \cite{mp09}), and we shall use this name in the following to denote this diagonal slice of local $\BP^1\times\BP^1$.

Of course the general local $\BP^1\times\BP^1$ geometry has two Picard--Fuchs operators; annihilating periods. One immediate simplification of the diagonal slice is to reduce this number to just one, 
\begin{equation}
\left\{ \left( z \p_z \right)^4 - 4z \left( 4 \left( z \p_z \right)^3 + 4 \left( z \p_z \right)^2 + z \p_z \right) \right\} f(z) = 0.
\end{equation}
\noindent
From its solutions, we can identify the mirror map and the genus-zero free energy,
\begin{align}
- t &= \log z + 4z + 18z^2 + \frac{400}{3}z^3 + \cdots = \log Q, \\ 
F^{(0)}_0 &= c_3 t^3 + c_2 t^2 + c_1 t - 4 Q - \frac{9}{2} Q^2 - \frac{328}{27} Q^3 + \cdots.
\end{align}
\noindent
After dealing with the genus-one free energy, $F^{(0)}_1$, one can proceed and use the holomorphic anomaly equations to compute higher-genus free energies\footnote{This was done in \cite{hkr08} using the language of propagators, and addressing the full local $\BP^1\times\BP^1$ geometry; and in \cite{dmp10} using modular forms, and while restricting to the ABJM diagonal slice.}, from which the GW invariants are eventually read. For example, in the language of modular forms,
\begin{align}
F^{(0)}_2 &= \frac{5 E_2^3}{5184 c d^2} - \frac{E_2^2}{576 d^2} + \frac{E_2 \left(c^2-c d+d^2\right)}{864 c d^2} + \frac{-16 c^3+15 c^2 d-21 c d^2+2 d^3}{51840 c d^2} = \nonumber \\
&= - \frac{Q}{60} - \frac{Q^2}{20} - \frac{Q^3}{10} + \cdots,
\end{align}
\noindent
where $E_2(\tau)$ is the second Eisenstein series, $c = \vartheta_3^4(\tau)$ and $d= \vartheta_4^4(\tau)$ are powers of theta functions, and the modular parameter $\tau$ is a function of $z$; see \cite{dmp10} for full details. Also, in this language $F^{(0)}_1 = \log \eta(\tau)$ with $\eta(\tau)$ the Dedekind eta-function. In appendix~\ref{sec:appendix:localP1xP1} we list the first few GW invariants, and figure~\ref{fig:GWtotalP1P1} schematically represents the ones we have computed and will work with.

%%%%%%%%%%%%%%%%%%%%%%%%%%%%%%%%%%%%%%%%%%%%%%%%%%%%%%%%%%%%%%%%%
\begin{figure}[t!]
\begin{center}
\raisebox{0.5cm}{\includegraphics[width=0.5\textwidth]{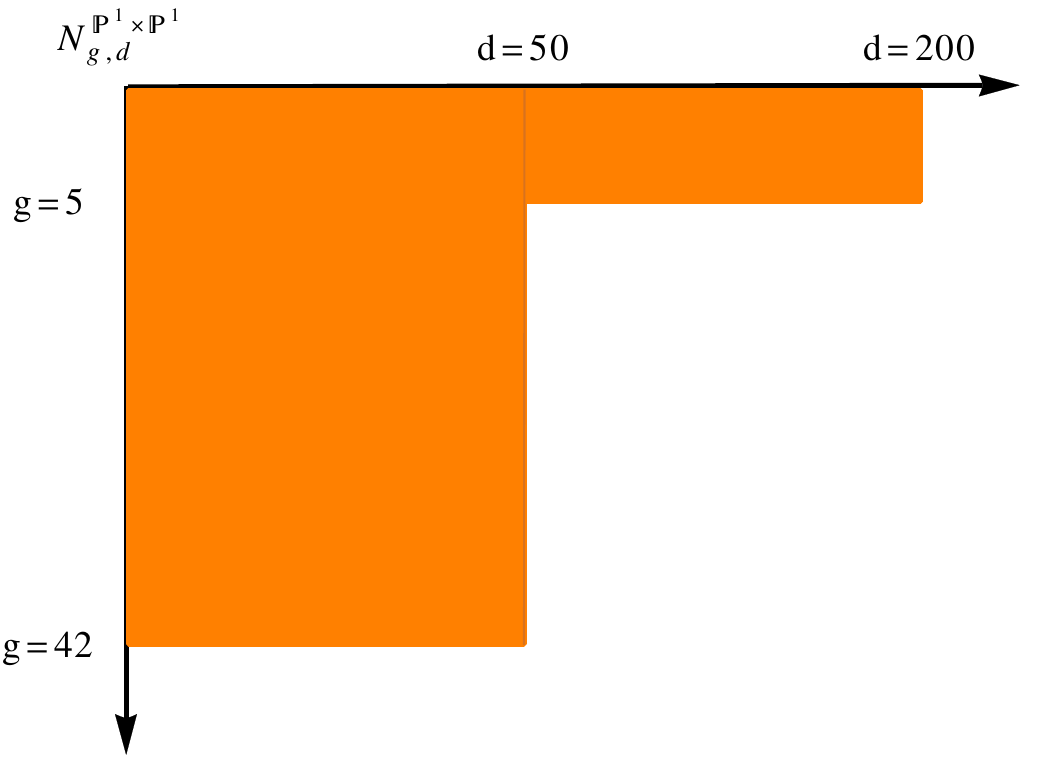}}
\end{center}
\vspace{-1\baselineskip}
\caption{Maximum degree and genus of the GW invariants we computed for ABJM.}
\label{fig:GWtotalP1P1}
\end{figure}
%%%%%%%%%%%%%%%%%%%%%%%%%%%%%%%%%%%%%%%%%%%%%%%%%%%%%%%%%%%%%%%%%

The instanton actions for ABJM were extensively discussed in \cite{dmp11} and are associated to special points in moduli space: large K\"ahler structure ($z=\infty$) yielding $A_{\text{K}} = 2\pi t$, conifold point ($z=z_{\text{c}}$) yielding $A_{\text{c}}$, and orbifold point ($z=0$) yielding $A_{\text{o}}$. These three actions are actually \textit{linearly dependent} with integer coefficients. This implies that the ABJM transseries (whose construction is still an open problem for future research) might either involve only two of these actions (selected upon some criteria of relevance), or it might involve all three of them (in which case one would obtain a resonant transseries as in, \textit{e.g.}, \cite{gikm10, asv11, sv13}).

%%%%%%%%%%%%%%%%%%%%%%%%%%%%%%%%%%%%%%%%%%%%%%%%%%%%%%%%%%%%%%%%%
\subsubsection*{Analysis of Large-Degree Growth}
%%%%%%%%%%%%%%%%%%%%%%%%%%%%%%%%%%%%%%%%%%%%%%%%%%%%%%%%%%%%%%%%%

The large degree, fixed genus, growth is completely analogous to that of local $\BP^2$, 
\be
N_{\boldsymbol{g},d} \sim c_{\boldsymbol{g}}\, d^{2\boldsymbol{g}-3}\, \rme^{d t_{\text{c}}} \left( \log d \right)^\delta.
\ee
\noindent
The value of the K\"ahler parameter at the conifold point is now $t_{\text{c}} := t(z=1/16) = 8K/\pi = 2.33248723\ldots$ \cite{dmp10}, where $K = \sum_{n=0}^{+\infty} (-1)^n \left(2n+1\right)^{-2}$ is the Catalan constant. The exponent $\delta$ numerically matches to the expected $2\boldsymbol{g}-2$. This $g$-dependence of the exponents may be tested using the same large-$d$ sequences as for local $\BP^2$, \textit{i.e.}, the combinations \eqref{eq:localP2_fcombination} and \eqref{eq:localP2_gammacombination}. These numerical results are illustrated in figures~\ref{fig:ABJM_large_degree_tc}, \ref{fig:ABJM_large_degree_2g-3} and \ref{fig:ABJM_large_degree_2g-2}.

%%%%%%%%%%%%%%%%%%%%%%%%%%%%%%%%%%%%%%%%%%%%%%%%%%%%%%%%%%%%%%%%%
\begin{figure}[t!]
\centering
\includegraphics[width=0.6\textwidth]{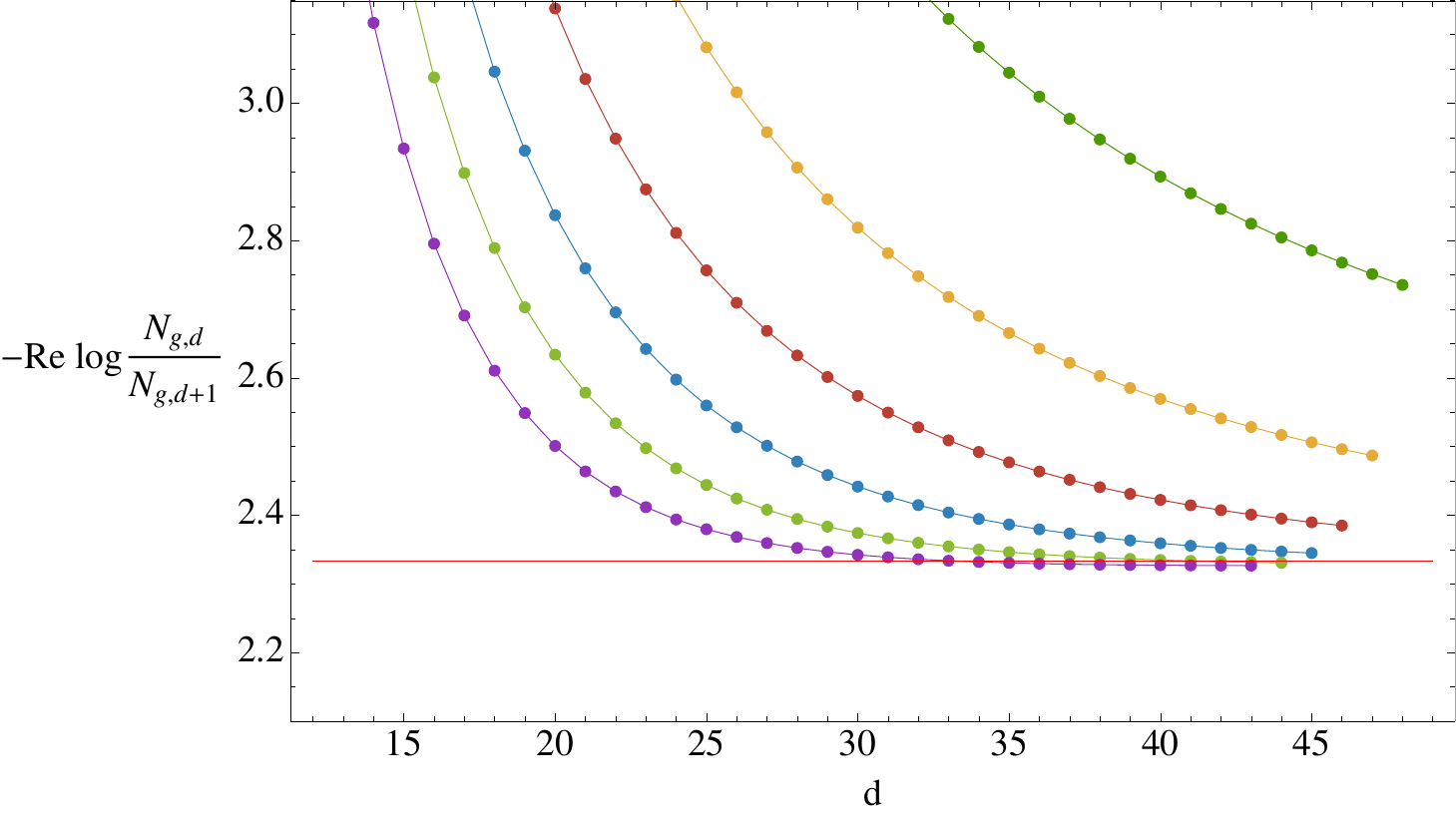}
\caption{ABJM: The exponent $t_{\text{c}}$ in the growth of $N_{\boldsymbol{g},d}$ is captured from the ratio of two consecutive GW invariants, when the degree is large. We plot that ratio alongside six Richardson extrapolations, which are clearly converging faster towards the expected result (up to a numerical relative error of about $0.3\%$).}
\label{fig:ABJM_large_degree_tc}
\end{figure}
%%%%%%%%%%%%%%%%%%%%%%%%%%%%%%%%%%%%%%%%%%%%%%%%%%%%%%%%%%%%%%%%%

%%%%%%%%%%%%%%%%%%%%%%%%%%%%%%%%%%%%%%%%%%%%%%%%%%%%%%%%%%%%%%%%%
\begin{figure}[t!]
\centering
\includegraphics[width=0.46\textwidth]{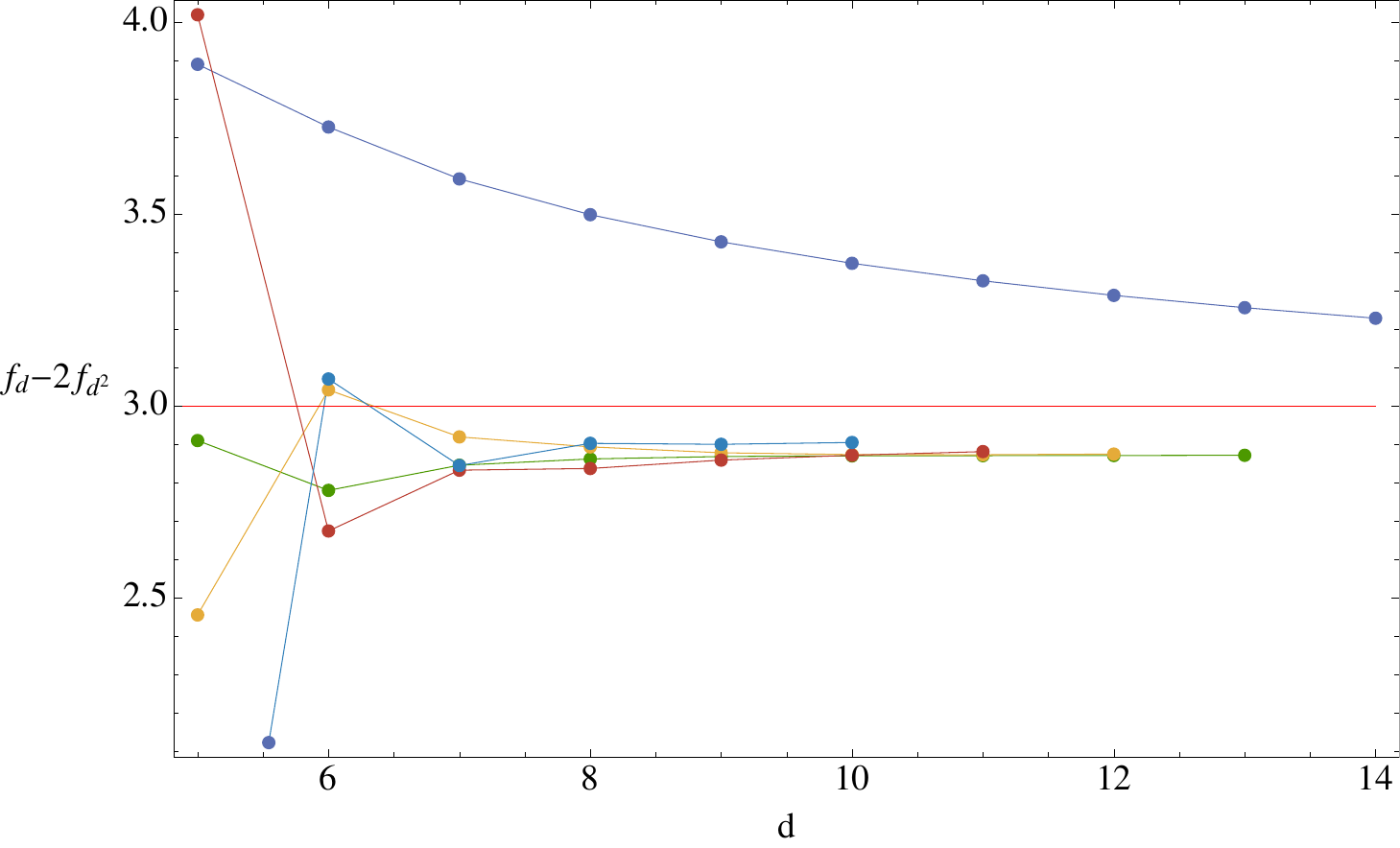}
\hspace{0.05\textwidth}
\includegraphics[width=0.46\textwidth]{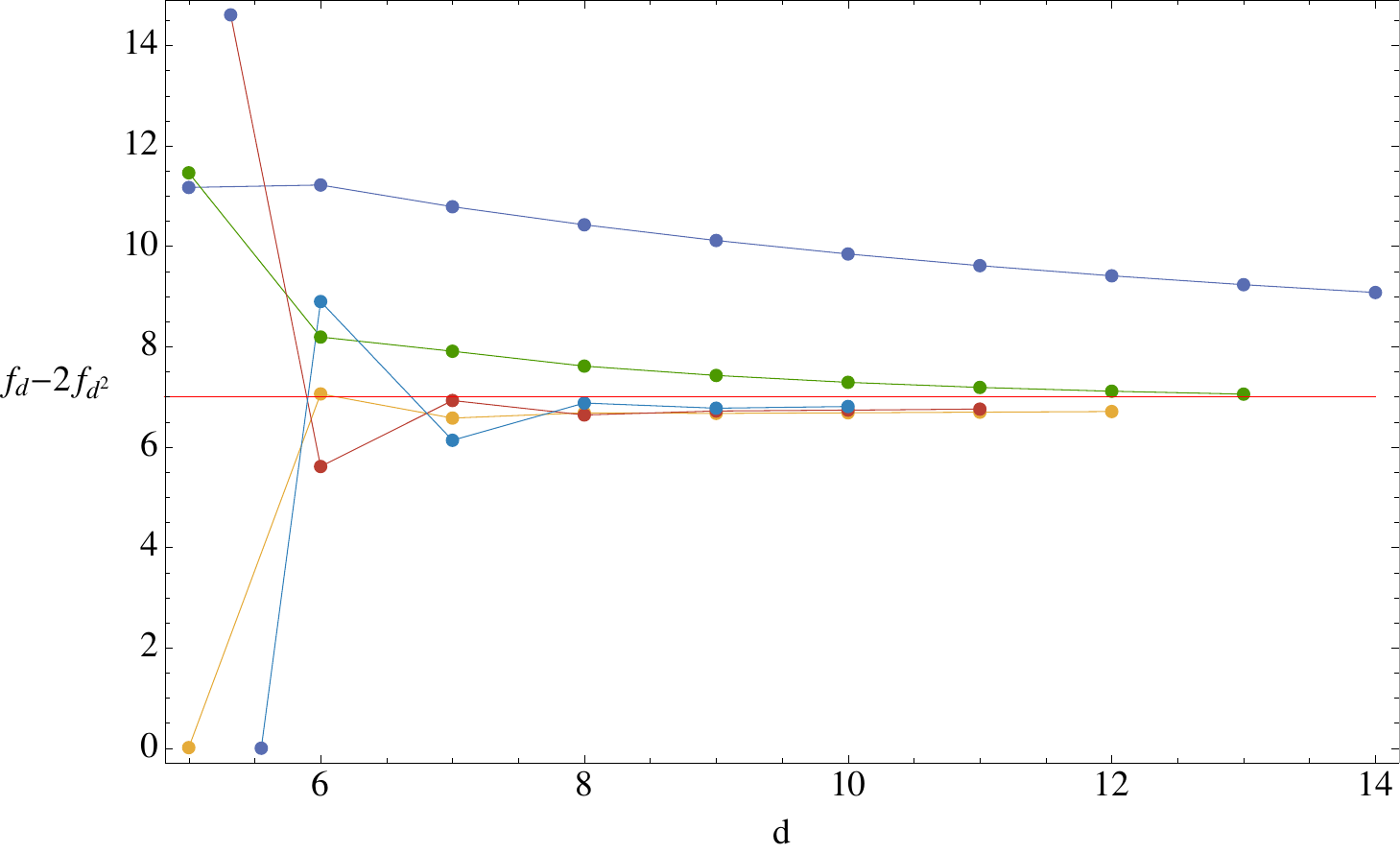}
\caption{ABJM: The exponent $2\boldsymbol{g}-3$ is the leading large-order term in $f_d - 2 f_{d^2}$. We have data up to $d=200$ so that the horizontal axis can only reach $d = 14$. The plots illustrate the first few Richardson transforms for $g=3$ (left) and $g=5$ (right), converging faster towards the expected result (up to numerical relative errors of about $3\%$ in both cases).}
\label{fig:ABJM_large_degree_2g-3}
\end{figure}
%%%%%%%%%%%%%%%%%%%%%%%%%%%%%%%%%%%%%%%%%%%%%%%%%%%%%%%%%%%%%%%%%

%%%%%%%%%%%%%%%%%%%%%%%%%%%%%%%%%%%%%%%%%%%%%%%%%%%%%%%%%%%%%%%%%
\begin{figure}[t!]
\centering
\includegraphics[width=0.46\textwidth]{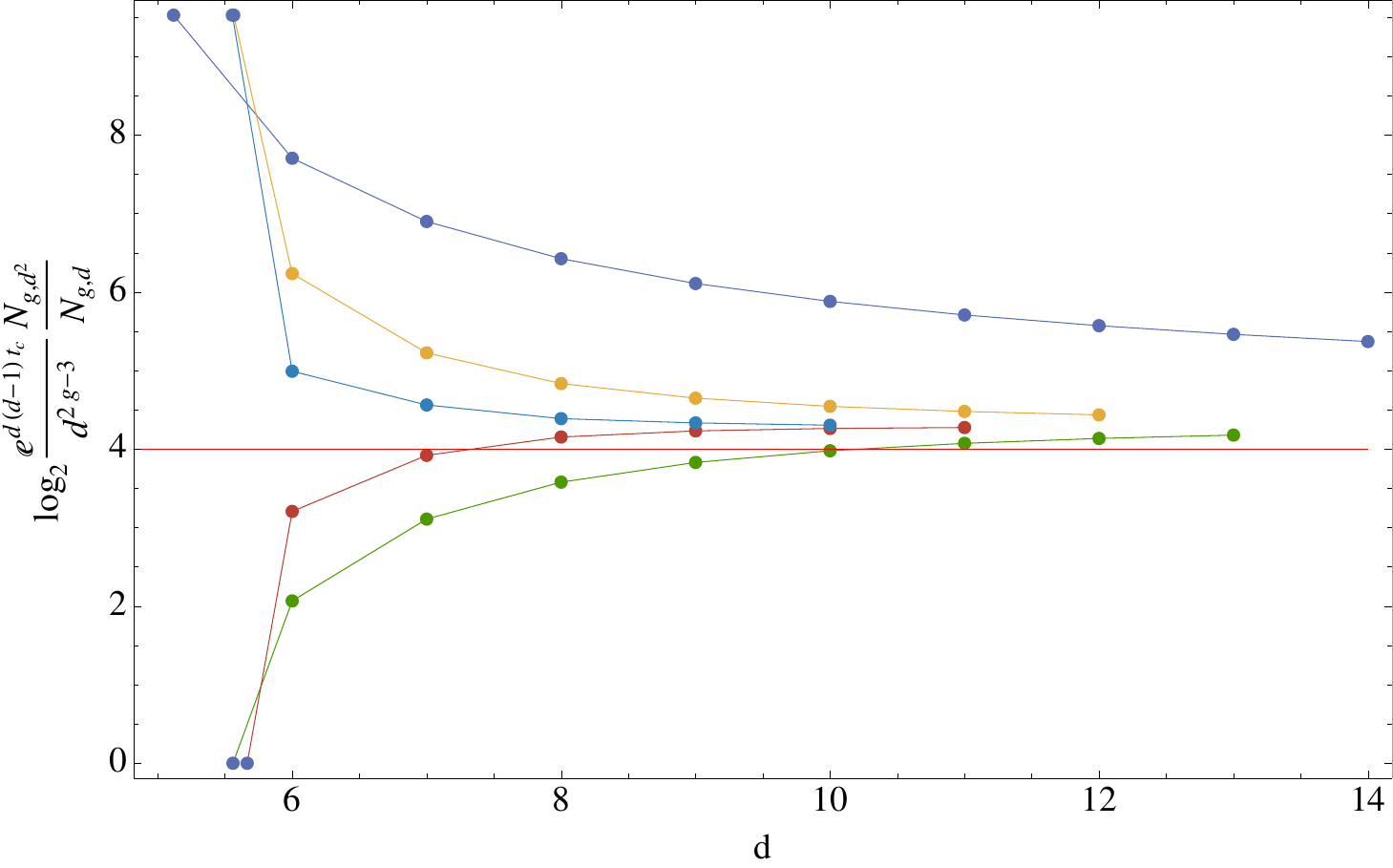}
\hspace{0.05\textwidth}
\includegraphics[width=0.46\textwidth]{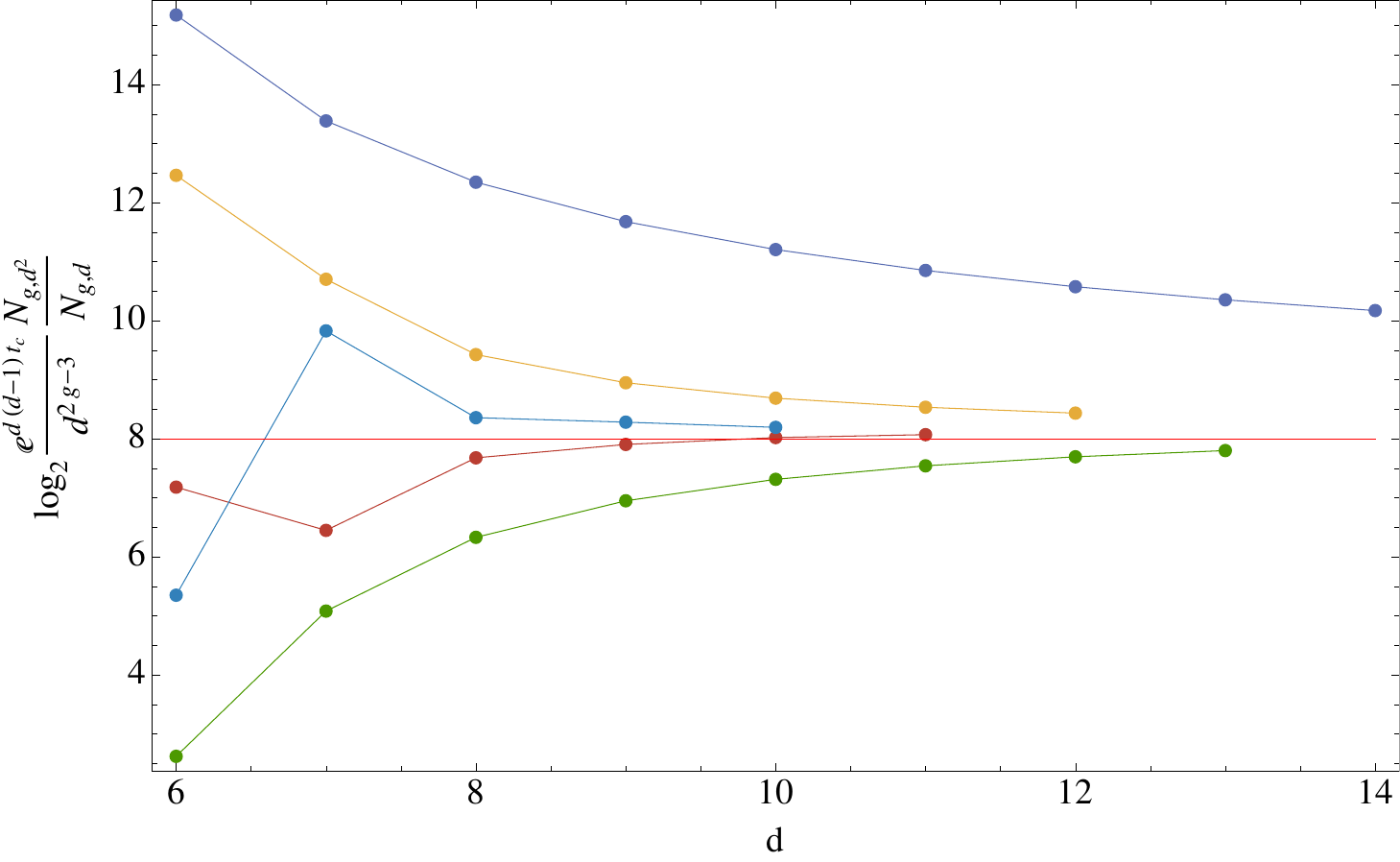}
\caption{ABJM: The exponent $\delta$ of the logarithm $\log d$ is the leading term in the sequence \eqref{eq:localP2_gammacombination}. Having data up to $d=200$ implies the horizontal axis only reaches $d = 14$. We plot the first few Richardson transforms for $g=3$ (left) and $g=5$ (right), converging faster towards the expected result (up to small numerical relative errors of about $8\%$ and $3\%$, respectively).}
\label{fig:ABJM_large_degree_2g-2}
\end{figure}
%%%%%%%%%%%%%%%%%%%%%%%%%%%%%%%%%%%%%%%%%%%%%%%%%%%%%%%%%%%%%%%%%

%%%%%%%%%%%%%%%%%%%%%%%%%%%%%%%%%%%%%%%%%%%%%%%%%%%%%%%%%%%%%%%%%
\subsubsection*{Analysis of Large-Genus Growth}
%%%%%%%%%%%%%%%%%%%%%%%%%%%%%%%%%%%%%%%%%%%%%%%%%%%%%%%%%%%%%%%%%

Again, the strategy is essentially the same as for the example of local $\BP^2$. As before, the GW invariants can be expanded in terms of $abc$-coefficients according to equation \eqref{eq:GW_abc}, but where in the present example one explicitly has $G_{\text{ABJM}} (d) = \floor{d(d-4)/4}+1$. A table with these first few coefficients is shown in appendix~\ref{sec:appendix:localP1xP1}.

The first few coefficients seem to answer to the closed-form formula
\begin{equation}
b^\tABJM_{d,\floor*{\frac{d(d-4)}{4}}-k} = p_{d,-4}(k)\, 4d \left( \floor*{\frac{\left(d+2\right)^2}{4}}-k\right),
\end{equation}
\noindent
where
\begin{equation}
p_{d,-4}(k) =
\begin{cases}
p_{\text{e},-4}(k)/2 & \text{even $d$}, \\
p_{\text{o},-4}(k)   & \text{odd $d$},
\end{cases}
\end{equation}
\noindent
and
\begin{align}
\sum_{k=0}^{+\infty} p_{\text{e},-4}(k)\, q^k &\stackrel{?}{=} \left(1+2q+2q^4\right) \prod_{m=1}^{+\infty} \frac{1}{\left(1-q^m\right)^4}, \\
\sum_{k=0}^{+\infty} p_{\text{o},-4}(k)\, q^k &\stackrel{?}{=} \left(1+q^2+q^6\right) \prod_{m=1}^{+\infty} \frac{1}{\left(1-q^m\right)^4}. 
\end{align}
\noindent
But with the (limited) available data we cannot confirm that these formulae are complete.

%%%%%%%%%%%%%%%%%%%%%%%%%%%%%%%%%%%%%%%%%%%%%%%%%%%%%%%%%%%%%%%%%
\subsubsection*{Combined/Diagonal Large-Growth in Genus and Degree}
%%%%%%%%%%%%%%%%%%%%%%%%%%%%%%%%%%%%%%%%%%%%%%%%%%%%%%%%%%%%%%%%%

The combined growth in genus and degree is similar to the one for local $\BP^2$. The two leading combinations, again arising from K\"ahler and conifold instanton actions, will allow us to connect the factorial growth of the perturbative free energies with the factorial growth of GW invariants. This is illustrated in figure~\ref{fig:ABJM_leading_degrees}. As already happened before, the growth associated to the K\"ahler action, $d = (2g-3)/t$, is well understood since the example of the resolved conifold, whereas the one associated to the conifold action, $d = a_0(Q) + a_1(Q)\, g$, can only be probed numerically. 

%%%%%%%%%%%%%%%%%%%%%%%%%%%%%%%%%%%%%%%%%%%%%%%%%%%%%%%%%%%%%%%%%
\begin{figure}[t!]
\centering
\includegraphics[width=0.46\textwidth]{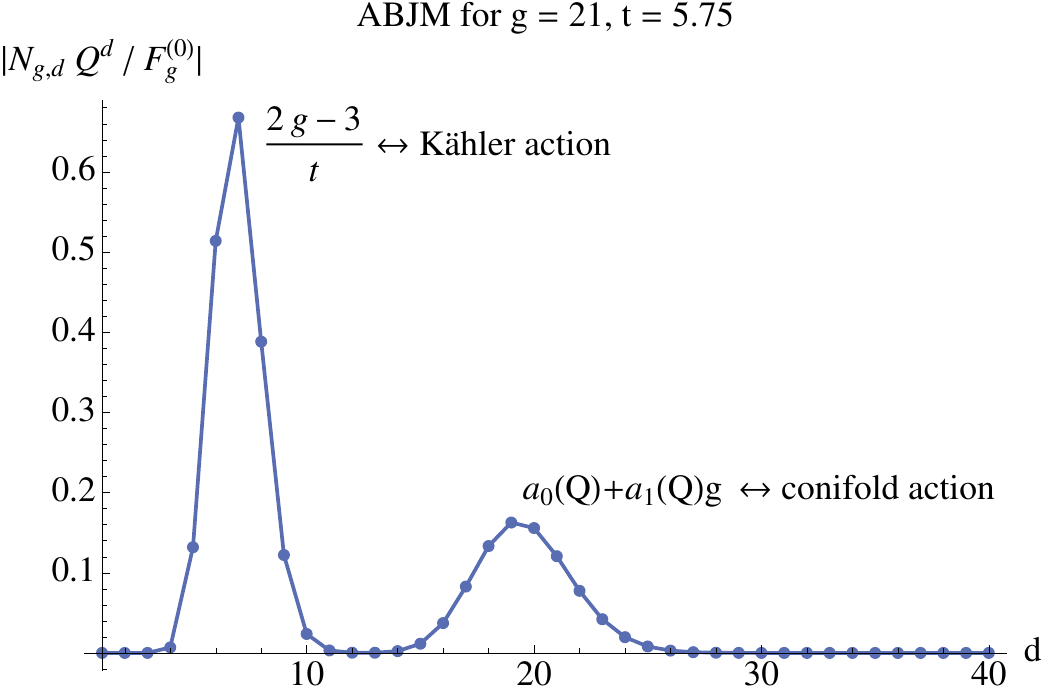}
\caption{ABJM: Graphical representation of which GW invariants contribute the most to a free energy $F^{(0)}_g(Q)$, for fixed values of $g$ and $Q=\rme^{-t}$. As for local $\BP^2$ in figure~\ref{fig:leading_degrees}, ABJM has saddle points corresponding to both K\"ahler and conifold actions. The values of $g$ and $t$ in the plot were chosen as to clearly see both saddles in the same figure.}
\label{fig:ABJM_leading_degrees}
\end{figure}
%%%%%%%%%%%%%%%%%%%%%%%%%%%%%%%%%%%%%%%%%%%%%%%%%%%%%%%%%%%%%%%%%

%%%%%%%%%%%%%%%%%%%%%%%%%%%%%%%%%%%%%%%%%%%%%%%%%%%%%%%%%%%%%%%%%
\subsubsection*{K\"ahler Leading Degree}
%%%%%%%%%%%%%%%%%%%%%%%%%%%%%%%%%%%%%%%%%%%%%%%%%%%%%%%%%%%%%%%%%

This growth is completely determined, at least up to the first exponentially-subleading instanton corrections, by the first GV invariant of ABJM which in this case is $n_0^{(1)} = -4$. Thus we have
\begin{equation}
\left. N^\tABJM_{g,d}\, Q^d\right|_{g=\frac{t}{2}d+\sh} \sim \sum_{h=0}^{+\infty} \frac{\Gamma \left(2g-\frac{3}{2}-h\right)}{A_{\text{K}}^{2g-\frac{3}{2}-h}}\, \frac{n_0^{(1)}\, t^{\frac{3}{2}-h}}{2^{2h+1}\, \pi^{h+2}}\, \polyname_h(\sh),
\label{eq:GW_ABJM_large_order_Kahler}
\end{equation}
\noindent
where $A_{\text{K}} = 2\pi t$ and the definition of $\polyname_h(\sh)$ is given in \eqref{eq:CP_h_polynomials}, \textit{i.e.}, they are precisely the same polynomials which have already appeared for resolved conifold and local $\BP^2$. Computational tests on the validity of \eqref{eq:GW_ABJM_large_order_Kahler} are shown in figures~\ref{fig:ALL_Kahler_Action} and~\ref{fig:ALL_LOOPS}, with the exact same discussion as for resolved conifold and local $\BP^2$. In fact, all the very same comments we made for local $\BP^2$ in section~\ref{sec:P2} also apply now. In particular, the numerical check of $a_0 = -3/t$ and $a_1 = 2/t$ is indeed confirmed as
\begin{alignat}{2}
a_0(Q)^{-1} &= \left(0.01 \pm 0.07\right) + \left(-0.33 \pm 0.01\right) t, \qquad && r^2 = 0.970, \\
a_1(Q)^{-1} &= \left(-0.003 \pm 0.005\right) + \left(0.5000 \pm 0.0009\right) t, \qquad && r^2 = 0.99992.
\end{alignat}
\noindent
This check is shown in the upper plots of figure~\ref{fig:ABJM_1overa0a1}.

%%%%%%%%%%%%%%%%%%%%%%%%%%%%%%%%%%%%%%%%%%%%%%%%%%%%%%%%%%%%%%%%%
\begin{figure}[t!]
\centering
\includegraphics[width=0.45\textwidth]{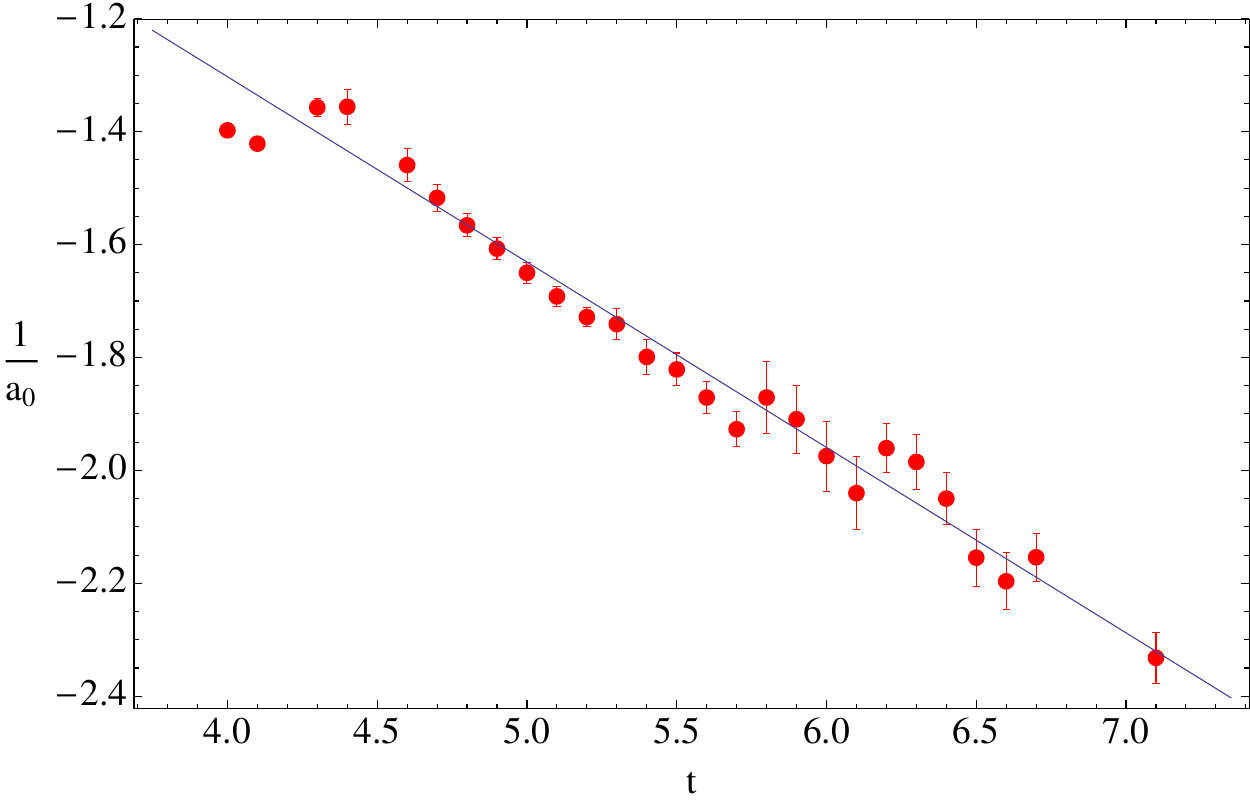}
\hspace{0.05\textwidth}
\includegraphics[width=0.45\textwidth]{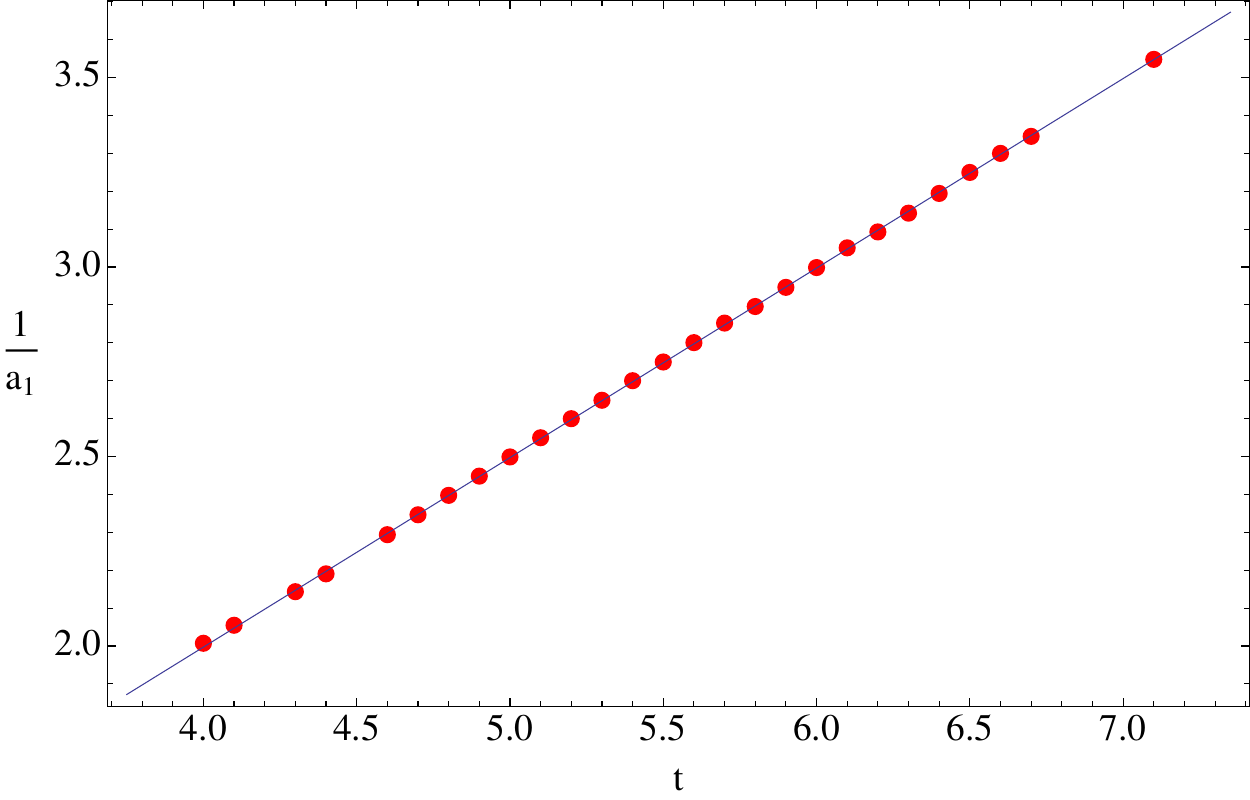}
\includegraphics[width=0.45\textwidth]{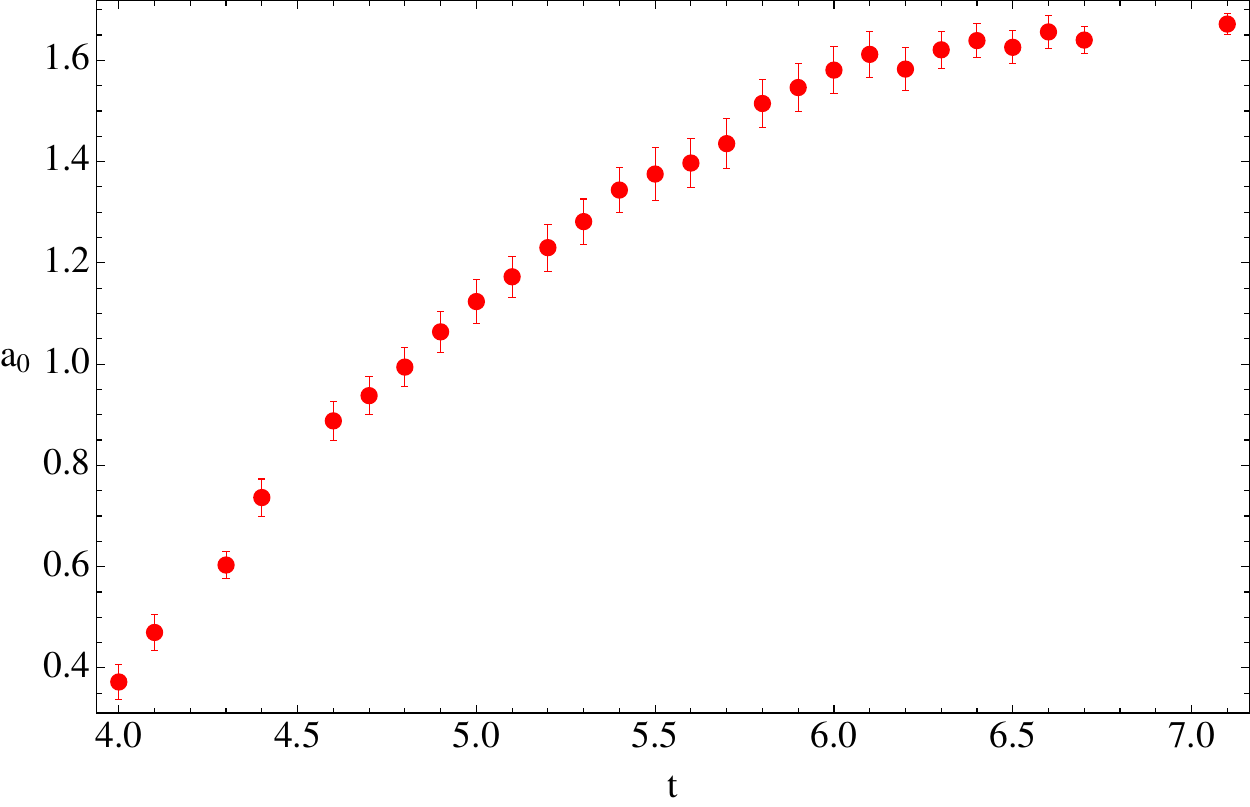}
\hspace{0.05\textwidth}
\includegraphics[width=0.45\textwidth]{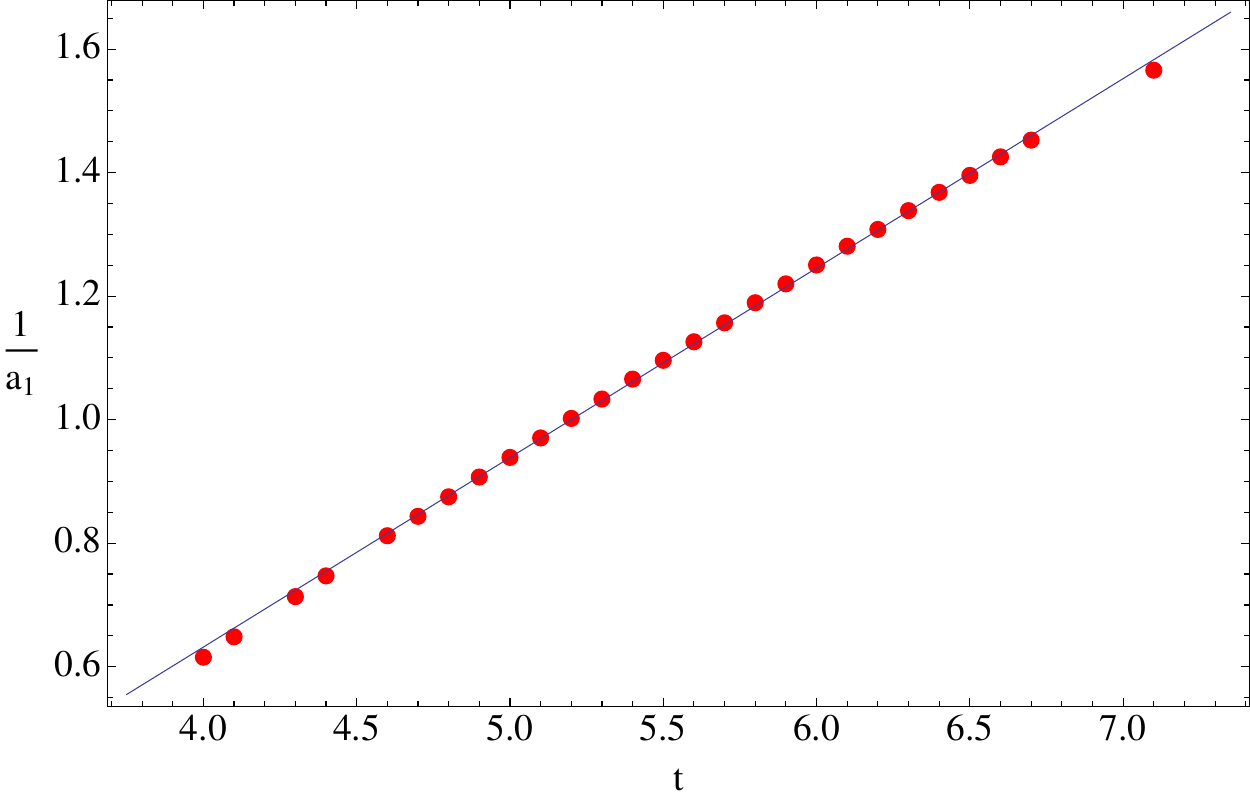}
\caption{ABJM: Numerical calculation of $a_0(Q)$ and $a_1(Q)$ associated to the K\"ahler instanton action (the two upper plots) and to the conifold instanton action (the two lower plots). We are showing test for their inverses whenever the dependence seems linear, although we were not able to confirm this analytically in the case of the conifold action.}
\label{fig:ABJM_1overa0a1}
\end{figure}
%%%%%%%%%%%%%%%%%%%%%%%%%%%%%%%%%%%%%%%%%%%%%%%%%%%%%%%%%%%%%%%%%

%%%%%%%%%%%%%%%%%%%%%%%%%%%%%%%%%%%%%%%%%%%%%%%%%%%%%%%%%%%%%%%%%
\subsubsection*{Conifold Leading Degree}
%%%%%%%%%%%%%%%%%%%%%%%%%%%%%%%%%%%%%%%%%%%%%%%%%%%%%%%%%%%%%%%%%

In this case, the analysis can only be carried out numerically, due to lack of theoretical knowledge of where $a_0(Q)$ and $a_1(Q)$ come from. The strategy is essentially the one already started with local $\BP^2$, and we find
\be
a_1(Q)^{-1} = \left(-0.595 \pm 0.009\right) + \left(0.307 \pm 0.002\right) t, \qquad r^2 = 0.9993.
\ee
\noindent
This fit is shown in the lower-right plot of figure~\ref{fig:ABJM_1overa0a1}. The lower-left plot of this figure shows the numerical calculation of $a_0(Q)$, again with no obvious fit to do here.

%%%%%%%%%%%%%%%%%%%%%%%%%%%%%%%%%%%%%%%%%%%%%%%%%%%%%%%%%%%%%%%%%
\subsection{The Example of the Local Curve $X_p$}\label{sec:curve}
%%%%%%%%%%%%%%%%%%%%%%%%%%%%%%%%%%%%%%%%%%%%%%%%%%%%%%%%%%%%%%%%%

Our next example deals with local curves. The non-compact CY threefolds to be considered are the total spaces of the rank-two holomorphic vector bundles $X_p \simeq \CO (p-2) \oplus \CO (-p) \to \BP^1$, with $p$ an integer (but due to the invariance  $p-2 \leftrightarrow -p$, one may choose $p \in \BN$). When $p=1$ one finds the resolved conifold, $\CO (-1) \oplus \CO (-1) \to \BP^1$ (addressed earlier), and when $p=2$ one finds the Dijkgraaf--Vafa geometries $\CO (0) \oplus \CO (-2) \to \BP^1$ relating to hermitian matrix models \cite{dv02}.

By making use of the topological vertex machinery \cite{akmv03} one may actually compute  high genus GW invariants for the local curve directly in the A-model. We shall nonetheless begin with some comments pertaining to the B-model free energy, following  \cite{cgmps06}.

%%%%%%%%%%%%%%%%%%%%%%%%%%%%%%%%%%%%%%%%%%%%%%%%%%%%%%%%%%%%%%%%%
\subsubsection*{Free Energies and Gromov--Witten Invariants}
%%%%%%%%%%%%%%%%%%%%%%%%%%%%%%%%%%%%%%%%%%%%%%%%%%%%%%%%%%%%%%%%%

The B-model free energy has the general structure \cite{cgmps06}
\be 
F_g^\tLC{p}( w) = \frac{1}{\left(w-w_{\text{c}}\right)^{5(g-1)}}\, \sum_{n=1}^{5(g-1)} a_{g,n}(p) \left(w-1\right)^n,
\label{FgBmodel}
\ee
\noindent
where the coefficients $a_{g,n}$ are of the form
\be 
a_{g,n} = \frac{b_{g,n}(p)}{(p-1)^k},
\ee
\noindent
with $k$ a positive integer and $b_{g,n}(p)$ a polynomial in $p$. They are not known in general and have to be fixed with GW invariants up to degree $d=5(g-1)$ (we present some of these coefficients in appendix~\ref{sec:appendix_local}). The modulus $w$ is related to the K\"ahler parameter $t$ through the mirror map
\be 
Q \equiv \rme^{-t} = w^{(p-1)^2-1} - w^{(p-1)^2},
\label{mirror_Xp}
\ee
\noindent
where the critical point is at
\be 
w_{\text{c}} = \frac{p(p-2)}{(p-1)^2},
\ee
\noindent
which translates to
\be
t_{\text{c}} = \log \left( \left( p \left(p-2\right) \right)^{p \left(2-p\right)} \left(p-1\right)^{2\left(p-1\right)^2} \right).
\ee
\noindent
It is interesting to notice that, unlike the previous geometries, all these formulae are now \textit{exact}. Further notice that the double-scaled theory at the critical point is now in the universality class of 2d gravity (the free energy being related to the Painlev\'e I equation) which is a distinct universality class from the previous $c=1$ examples \cite{cgmps06}.

As mentioned earlier, one can compute the partition function, and thus the free energy, as a sum over integer partitions directly in the A-model using the topological vertex \cite{akmv03}. We shall not get into any details, which may be found in \cite{cgmps06}, and simply quote the end results. We have computed\footnote{On a technical aside, let us mention that the main obstacle in such A-model computations is the growth in degree, since it implies considering an exponentially-growing number of partitions and ever larger expressions to put together. The expansion of the free energy in powers of $g_{\text{s}}$ is also time and resource-consuming, but this computation can be improved if we compute the GW invariants numerically. The only requisite is that the numerical precision should be high enough in order to extract rational numbers out of decimal expansions.} GW invariants $N_{g,d}^\tLC{p}$ with fixed $p=3,4,5$ and in appendix~\ref{sec:appendix_local} we list a few such invariants. Figure~\ref{fig:data} schematically represents all the invariants we did compute and will work with herein. In the rest of this section we will mostly omit the $p$-dependence of the GW invariants for shortness, but our results for the different types of growth will always be for general $p$ unless explicitly stated otherwise. We should also point out that we are including an extra sign in our GW invariants \cite{gv98a, gv98c}
\be 
N_{g,d}^\tLC{p} \rightarrow (-1)^{g-1}\, N_{g,d}^\tLC{p}.
\ee
\noindent
This is essentially required in order to produce integer GV invariants.

%%%%%%%%%%%%%%%%%%%%%%%%%%%%%%%%%%%%%%%%%%%%%%%%%%%%%%%%%%%%%%%%%
\begin{figure}[t!]
\begin{center}
\raisebox{0.5cm}{\includegraphics[width=0.5\textwidth]{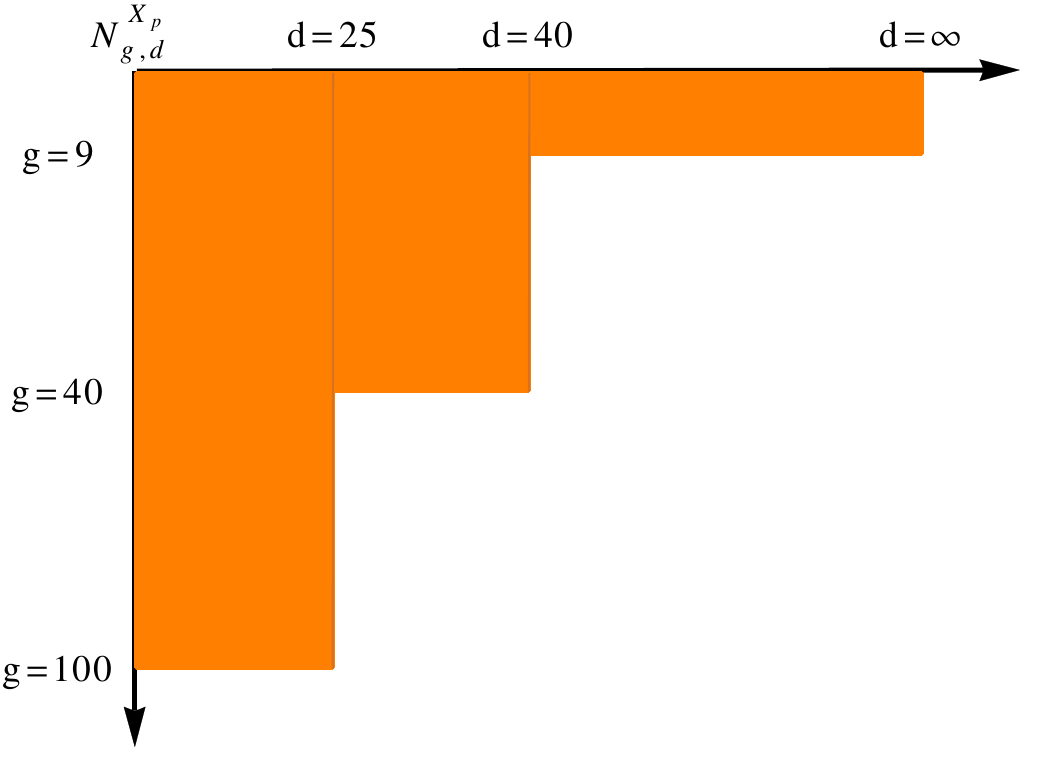}}
\end{center}
\vspace{-1\baselineskip}
\caption{Maximum degree and genus of the GW invariants we computed for the local curves $X_p$ with $p=3,4,5$. For $g\leq9$ we have all the required data to fix \eqref{FgBmodel} and thus can compute GW invariants for any degree.}
\label{fig:data}
\end{figure}
%%%%%%%%%%%%%%%%%%%%%%%%%%%%%%%%%%%%%%%%%%%%%%%%%%%%%%%%%%%%%%%%%

For $g\leq9$ we have enough data to completely fix the coefficients in \eqref{FgBmodel}, which means the GW invariants can then be computed to arbitrarily high degree. It is also worth mentioning that for $g=0, 1$ there are explicit formulae for the GW invariants \cite{cgmps06}
\bea
N_{0,d}^\tLC{p} (p) &=& - \frac{\left( d\, f-1 \right)!}{d!\, d^2\, \left( d \left( f-1 \right) \right)!}, \\
N_{1,d}^\tLC{p} (p) &=& \frac{1}{24 d} \sum_{n=0}^{d-1} \frac{f^{d-n}}{n!} \prod_{k=1}^{n} \left( d \left( f-1 \right) + k - 1 \right) - \frac{1}{24} \frac{\left( d\, f - 1 \right)!}{d!\, \left( d \left( f-1 \right) \right)!} \left( f+2 \right),
\eea 
\noindent
where we have set $f \equiv \left( p-1 \right)^2$, and where higher-genus closed-form generalizations are not known. One general thing which is known \cite{cgmps06} is that if written for arbitrary $p$, as $N_{g,d}^\tLC{p} (p)$, the GW invariants are polynomials of degree $2g+2d-2$ in $p$, with rational coefficients,
\be
N_{g,d}^\tLC{p} (p) = \sum_{n=0}^{2g+2d-2} N_{g,d,n}\, p^n.
\ee

%%%%%%%%%%%%%%%%%%%%%%%%%%%%%%%%%%%%%%%%%%%%%%%%%%%%%%%%%%%%%%%%%
\subsubsection*{Analysis of Large-Degree Growth}
%%%%%%%%%%%%%%%%%%%%%%%%%%%%%%%%%%%%%%%%%%%%%%%%%%%%%%%%%%%%%%%%%

The analysis of the fixed-genus, large-degree growth of GW invariants in this example is best achieved within the B-model formulation \eqref{FgBmodel}. The A-model (topological vertex) calculation is only efficient up to about degree $d=40$ and, as we shall see, the large-degree convergence of the GW invariants is very slow. Using data up to genus $g=40$ (for $p=3,4$) we have fixed the coefficients $a_{g,n}$ up to $g=9$ and then can compute $N_{g\leq9,d}$ up to very high degree.

To see how this works, rewrite \eqref{FgBmodel} as
\be 
F_g^\tLC{p}(t) = \sum_{k=0}^{5(g-1)} \alpha_{g,k} \left(w-w_{\text{c}}\right)^{k-5(g-1)} \qquad \text{with} \qquad \alpha_{g,k} = \sum_{n=1}^{5(g-1)} \binom{n}{k}\, a_{g,n} \left(w_{\text{c}}-1\right)^{n-k}.
\label{FgBmodel2}
\ee
\noindent
A Lagrange inversion turns \eqref{FgBmodel2} into a $Q$-expansion (related to $w$ via the mirror map \eqref{mirror_Xp}), from where GW invariants are easily extracted. We will skip these details and refer the reader to Appendix~A of \cite{cgmps06} for the definition of Lagrange inversion as well as instructive examples. In the end, one finds the explicit result
\be 
N_{\boldsymbol{g},d}^\tLC{p} = \frac{(-1)^{d-1}}{d}\, \sum_{k=0}^{5(\boldsymbol{g}-1)} \alpha_{\boldsymbol{g},k}\, \left( 5 \left(g-1\right) - k \right)\, f^{d + 5 \left( \boldsymbol{g}-1 \right) - k}\, P_{d-1}^{(u,v)} \left(\frac{f-2}{f}\right),
\label{GW_lagrange}
\ee
\noindent
where
\be 
u = k - d - 5 \left(\boldsymbol{g}-1\right), \qquad v = d \left(f-1\right) + 5\left(\boldsymbol{g}-1\right)-k,
\ee
\noindent
and where the $P_n^{(a,b)}(z)$ are Jacobi polynomials. Finding the large-degree behavior of the GW invariants now reduces to the corresponding large-degree behavior of the Jacobi polynomials. We still had to approach this behavior numerically, essentially because the degree $d$ appears in three different places. Furthermore, the aforementioned slow convergence of the GW invariants will be made clear in the following, as the large-degree expansion turns out to be a power-series expansion in $\sqrt{d}$ for which our standard techniques of Richardson extrapolation are not very useful. However, restricting the study to a grid of perfect squares, \textit{i.e.}, $d=\ell^2$, we then get back the very fast convergence via Richardson transforms, from where one can then comfortably find rational numbers out of decimal expansions.

Consider the following combination
\be 
P_{d,\boldsymbol{g},k} (f) \equiv f^{d + 5\left(\boldsymbol{g}-1\right)-k}\, \left( 5 \left(\boldsymbol{g}-1\right) - k \right)\, P_{d-1}^{(u,v)} \left(\frac{f-2}{f}\right),
\ee
\noindent
for which we find a large-degree expansion of the form
\be 
P_{d,\boldsymbol{g},k} (f) = (-1)^{d-1}\, \rme^{d t_{\text{c}}}\, d^{\frac{5}{2} \left(\boldsymbol{g}-1\right) - \frac{k}{2}} \sum_{n=0}^{+\infty} c_{\boldsymbol{g},k}^{(n)}\, d^{-\frac{n}{2}} \simeq (-1)^{d-1}\, \rme^{d t_{\text{c}}}\, d^{\frac{5}{2} \left(\boldsymbol{g}-1\right) - \frac{k}{2}} \left( c_{\boldsymbol{g},k}^{(0)} + \frac{c_{\boldsymbol{g},k}^{(1)}}{\sqrt{d}} + \cdots \right).
\ee
\noindent
Using $\widehat{g} \equiv 5(g-1)$ for shortness, the first coefficients are
\be
c_{g,k}^{(0)} = \frac{\rme^{\frac{1}{2} \left(\widehat{g}-k\right) t_{\text{c}}}\, \mathcal{A}^{k-\widehat{g}}}{\Gamma\left(\frac{1}{2} \left( \widehat{g}-k \right) \right)}, \qquad c_{g,k}^{(1)} = \frac{\sqrt{2}}{3}\, \frac{\rme^{\frac{1}{2} \left(\widehat{g}-k\right) t_{\text{c}}}\, \mathcal{A}^{k-\widehat{g}}}{\Gamma\left(\frac{1}{2} \left( \widehat{g}-k-1 \right) \right)}\, \frac{f-2}{\sqrt{f \left(f-1\right)}} \left(\widehat{g}-k\right), 
\ee
\noindent
where we have defined
\be 
\mathcal{A} = \sqrt{2}\, \frac{w_{\text{c}}^{1-(p-1)^2/2}}{p-1}.
\ee
\noindent
In general, they will have the structure
\be 
c_{g,k}^{(j)} = \frac{\rme^{\frac{1}{2} \left(\widehat{g}-k\right) t_{\text{c}}}\, \mathcal{A}^{k-\widehat{g}}}{\Gamma\left( \frac{1}{2} \left( \widehat{g}-k-j \right) \right)}\, \frac{1}{\left( f \left( f-1 \right) \right)^{\frac{j}{2}}}\, \sum_{j_0=1}^{j} \widehat{c}_{j_0}^{(j)}(f) \left(\widehat{g}-k\right)^{j_0},
\ee
\noindent
with $\widehat{c}_{j_0}^{(j)}(f)$ a polynomial in $f$ of degree $j$. We refer to appendix~\ref{sec:appendix_local} for a few such explicit results at the lowest orders. From these results it immediately follows that the leading term in $P_{d,g,0}$ reproduces the known behavior of the GW invariants \eqref{larged_general} (with $\alpha=\beta=0$ and $\gamma=-1/2$), and the coefficients can be shown to match the solution of Painlev\'e I, in the appropriate double-scaling limit. This leading behavior then has corrections, suppressed by powers of $d^{-1/2}$.

The GW invariants for arbitrary $p$ (or $f$) thus have the following large-degree expansion\footnote{Note that one interesting feature of this $\gamma=-1/2$ universality class, and as compared to the general structure in \eqref{larged_general}, is that there are no logarithmic contributions to the large-degree asymptotics.}
\bea 
N_{\boldsymbol{g},d} &\sim& \rme^{d t_{\text{c}}}\, d^{\frac{5}{2} \left(\boldsymbol{g}-1\right) - 1} \left( c_{\boldsymbol{g},0}^{(0)}\, \alpha_{\boldsymbol{g},0} + \frac{c_{\boldsymbol{g},1}^{(0)}\, \alpha_{\boldsymbol{g},1} + c_{\boldsymbol{g},0}^{(1)}\, \alpha_{\boldsymbol{g},0}}{\sqrt{d}} + \cdots \right) = \\
&=& \rme^{d t_{\text{c}}}\, d^{\frac{5}{2} \left(\boldsymbol{g}-1\right) - 1}\, \sum_{j=0}^{+\infty}\, \sum_{j'=0}^{\text{Min} \left(j, \,5(\boldsymbol{g}-1)\right)} c_{\boldsymbol{g},j'}^{(j-j')}\, \alpha_{\boldsymbol{g},j'}\, d^{-\frac{j}{2}}.
\label{GWlarged}
\eea
\noindent
This result is illustrated and tested in figure~\ref{fig:larged}. On its left plot we consider the ratio $N_{\boldsymbol{g},d}^{(\text{pred})}/N_{\boldsymbol{g},d}$, where the asymptotic prediction in the numerator consists of using the expansion \eqref{GWlarged} up to the subleading correction $j=0$ (blue), $j=2$ (green), $j=4$ (yellow) and $j=6$ (red), up to degree $d=100$ and fixed genus $g=3$. At degree $d=100$ the leading-order term is of the right order of magnitude, but it is still off by about $\sim 75\%$. Upon inclusion of subleading terms, the ratio then starts approaching $1$, faster and faster. On the right plot of figure~\ref{fig:larged} we show the number of decimal places of agreement between the GW invariants and their large-degree expansion, with the same color coding. This time we move only along perfect squares and work with fixed genus $g=6$. At $d=10000$ the leading term is still only good enough for the first decimal place, but then the agreement improves significantly once we start adding subleading corrections.

%%%%%%%%%%%%%%%%%%%%%%%%%%%%%%%%%%%%%%%%%%%%%%%%%%%%%%%%%%%%%%%%%
\begin{figure}[t!]
\begin{center}
\raisebox{0.5cm}{\includegraphics[width=0.49\textwidth]{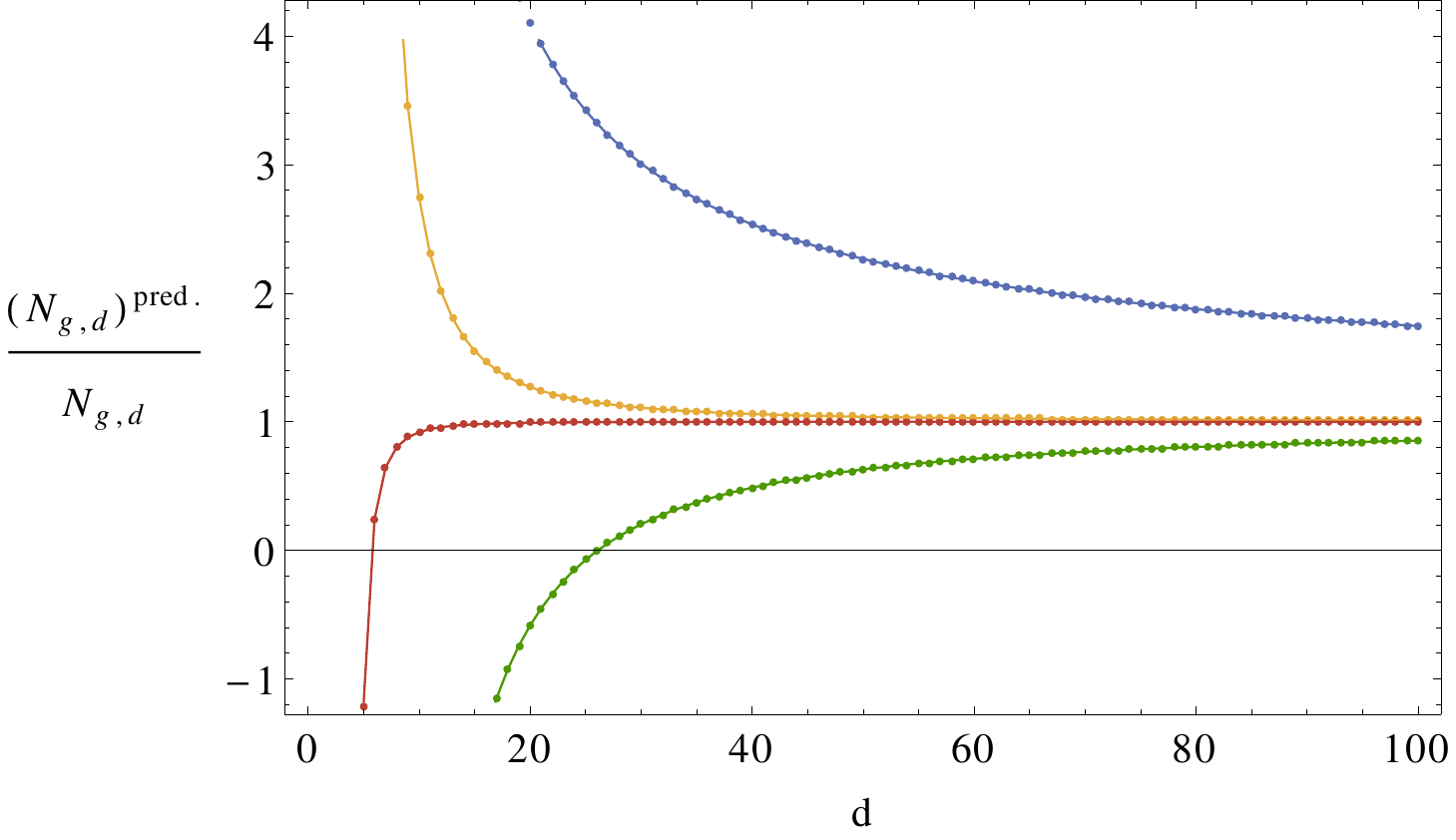}}
\hspace{0.02\textwidth}
\includegraphics[width=0.47\textwidth]{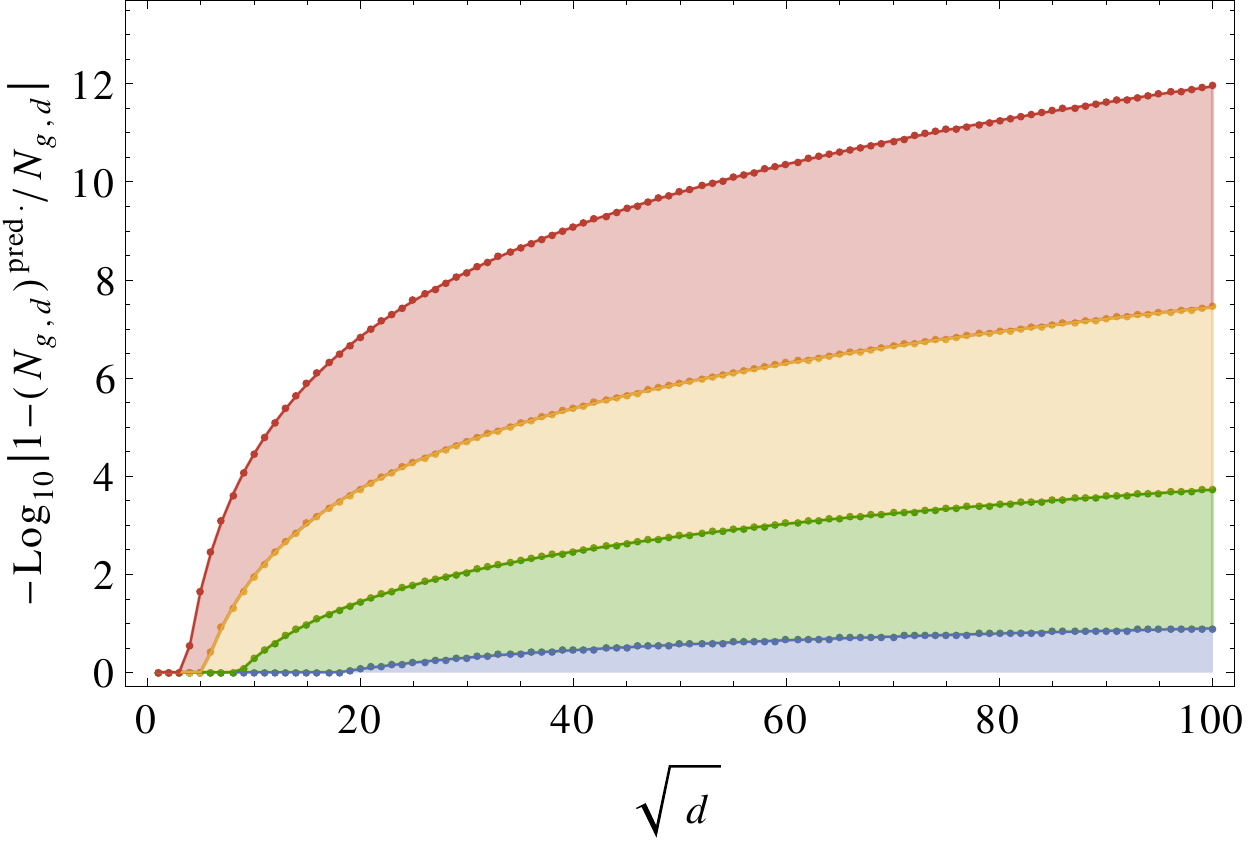} 
\end{center}
\vspace{-1\baselineskip}
\caption{Local curve: Left: Comparison between $p=3$ GW invariants and their asymptotic prediction in \eqref{GWlarged}, at fixed genus $g=3$ and degree $d\leq 100$, and up to subleading correction $j=0$ (blue), $j=2$ (green), $j=4$ (yellow) and $j=6$ (red). Right: Number of decimal places of agreement between the analytical $p=3$ GW invariants and their asymptotic prediction, for fixed genus $g=6$ (and using the same color code).}
\label{fig:larged}
\end{figure}
%%%%%%%%%%%%%%%%%%%%%%%%%%%%%%%%%%%%%%%%%%%%%%%%%%%%%%%%%%%%%%%%%

%%%%%%%%%%%%%%%%%%%%%%%%%%%%%%%%%%%%%%%%%%%%%%%%%%%%%%%%%%%%%%%%%
\subsubsection*{Analysis of Large-Genus Growth}
%%%%%%%%%%%%%%%%%%%%%%%%%%%%%%%%%%%%%%%%%%%%%%%%%%%%%%%%%%%%%%%%%

As discussed earlier, the large-genus fixed-degree growth of the GW invariants may be read directly from the \textit{abc}-formula \eqref{eq:GW_abc}. In the case of the local curve, it simply reads
\be
N^\tLC{p}_{g,\boldsymbol{d}} = f^\tconi_g \left\{ \sum_{n|\boldsymbol{d}} a^\tLC{p}_n \left( \frac{\boldsymbol{d}}{n} \right)^{2g-3} + \frac{2g}{B_{2g}}\, \frac{1}{\boldsymbol{d}} \left( c^\tLC{p}_{\boldsymbol{d}}\, \delta_{g,1} + \sum_{n=1}^{G_\tLC{p}(\boldsymbol{d})-1} b^\tLC{p}_{\boldsymbol{d},n}\, n^{2g-2} \right) \right\},
\ee
\noindent
where now $G_\tLC{p}(d) = (d-1) \left( \left(p-2\right) d-2 \right)/2$. A table with the first few $abc$-coefficients may be found in appendix~\ref{sec:appendix_local}. To show an example, let us write down the first couple of terms for $p=3$
\be 
N_{g,\boldsymbol{d}}^\tLC{3} \sim \frac{2 \left(2g-1\right)}{\boldsymbol{d}^3} \left( \frac{\boldsymbol{d}}{2\pi} \right)^{2g} \left\{ 1 + (-1)^{\boldsymbol{d}-1}\, 7^{\frac{1+(-1)^{\boldsymbol{d}}}{2}}\, \frac{1}{2^{2g}} + 55^{(1+2\boldsymbol{d}^2) \text{ mod } 3}\, \frac{1}{3^{2g}} + \mathcal{O}(4^{-2g}) \right\}.
\ee
\noindent
This expression is valid for any degree, unlike in previous examples.

%%%%%%%%%%%%%%%%%%%%%%%%%%%%%%%%%%%%%%%%%%%%%%%%%%%%%%%%%%%%%%%%%
\subsubsection*{Combined/Diagonal Large-Growth in Genus and Degree}
%%%%%%%%%%%%%%%%%%%%%%%%%%%%%%%%%%%%%%%%%%%%%%%%%%%%%%%%%%%%%%%%%

The combined ``diagonal'' growth will turn out to be similar to the previous local-surface examples (and in fact will lead to some sort of \textit{large-order universality} for topological strings in different double-scaled universality classes). In fact, also in the local-curve case we cannot (analytically) pinpoint the nonperturbative structure of the GW invariants in a general situation where $d = a_0(Q) + a_1(Q)\, g$. As usual, the one exception happens at large-radius $t \rightarrow +\infty$, where the contribution of the GW invariants to the free energy is strongly peaked around $d = (2g-3)/t$. These K\"ahler and critical-point peaks are illustrated in figure~\ref{fig:localcurve_leading_degrees}.

%%%%%%%%%%%%%%%%%%%%%%%%%%%%%%%%%%%%%%%%%%%%%%%%%%%%%%%%%%%%%%%%%
\begin{figure}[t!]
\centering
\includegraphics[width=0.46\textwidth]{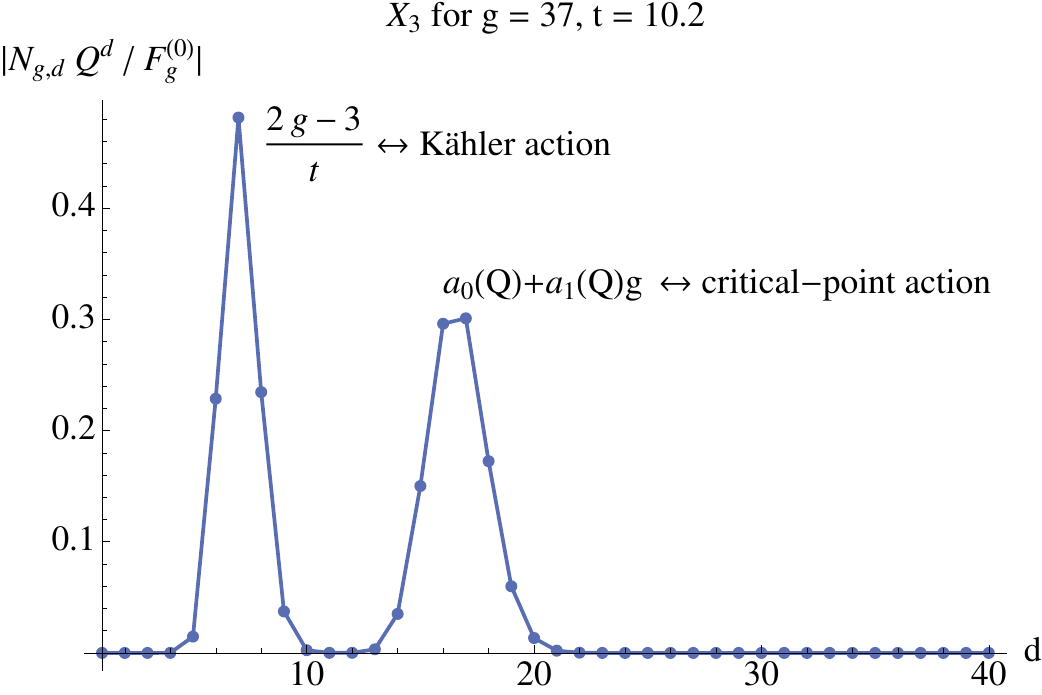}
\caption{Local curve: Graphical representation of which GW invariants contribute the most to a free energy $F^{(0)}_g(Q)$, for fixed values of $g$ and $Q=\rme^{-t}$, and with $p=3$. As for the earlier examples of local $\BP^2$ in figure~\ref{fig:leading_degrees} and ABJM in figure~\ref{fig:ABJM_leading_degrees}, also the local curve has saddle points corresponding to both K\"ahler and critical-point actions. The values of $g$ and $t$ in the plot were chosen as to clearly see both saddles in the same figure.}
\label{fig:localcurve_leading_degrees}
\end{figure}
%%%%%%%%%%%%%%%%%%%%%%%%%%%%%%%%%%%%%%%%%%%%%%%%%%%%%%%%%%%%%%%%%

The nonperturbative structure of the local-curve free-energy was addressed in \cite{cgmps06, m06, msw07}, where instantons associated to spectral-curve B-cycles were found to control the large-order behavior of the free energy. But, as we shall see, as one moves towards larger and larger values of $t$ this picture changes. Let us first address this question within the free energy itself, before translating to the GW invariants. For the remainder of this section we work with an approximated free energy
\be 
F_g^\tLC{p}(t) \approx (F^\tLC{p})^{\star}_g(t) := \sum_{d=1}^{d_{\text{max}}(g)} N_{g,d}^\tLC{p}\, \rme^{-d t},
\label{Fg_sum}
\ee
\noindent
where $d_{\text{max}}(g)$ is the highest degree for which we have computed $N_{g,d}^\tLC{p}$ (our data is represented in figure~\ref{fig:data}). We should stress that, with the leading contributions arising from near $d = \frac{2g-3}{t}$, we can always be sure that no significant contributions were left unaccounted for, and, in the end, the high accuracy of the large-order predictions will confirm that there are no issues with our approximation \eqref{Fg_sum}. What we find at large-order resembles the resolved conifold \eqref{largeo_coni}, in that the leading factorial growth is governed by a Gaussian-like action $A=2\pi t$ with a one-instanton sector that truncates at two-loops\footnote{This is also true for all higher instanton sectors.}. In addition, there is a tower of other contributions that amounts to replacing the K\"ahler modulus $t$ with $t_n = t + 2\pi\rmi\, n$, $n \in \mathbb{Z}_{\ne 0}$, or, equivalently, with shifted instanton actions $A_n = A + 4\pi^2\rmi\, n$. The result is 
\bea 
F_g^\tLC{p}(t) &\sim& \frac{\Gamma \left(2g-1\right)}{\pi \left(2\pi t\right)^{2g-1}} \left( t + \frac{t}{2g-2} \right) + \label{largeoF} \\
&&
+ \sum_{m=1}^{+\infty} \frac{\Gamma \left(2g-1\right)}{\pi \left|A_m\right|^{2g-1}} \left\{ 2 \left|t_m\right| \cos \left( \left(2g-2\right) \theta_m \right) + \frac{2 \left|t_m\right|}{2g-2}\, \cos \left( \left(2g-2\right) \theta_m \right) \right\}, \nonumber
\eea
\noindent
where we have also defined $\theta_m := \arg A_m = \arctan \frac{2\pi n}{t}$. It may be instructive to rewrite the above large-order relation in a more standard resurgence language, similar to \eqref{eq:largeorderF0gF10}. The difference is that now we have infinitely-many instanton actions $A_m$, where expansions around each sector truncate at two loops. In this way we rewrite \eqref{largeoF} as
\be
F_g^{X_p,(0)}(t) \sim \frac{S_1}{2\pi\rmi}\, \sum_{m \in \BZ} \frac{\Gamma \left(2g-1\right)}{A_m^{2g-1}} \left\{ F^{X_p,(1)}_{m,1}(t_m) + \frac{F^{X_p,(1)}_{m,2}(t_m)\, A_m}{2g-2} \right\} + \text{2-instanton corrections}.
\ee
\noindent
One immediately identifies the Stokes constant $S_1 = 2\rmi$, and the one-instanton, one- and two-loop coefficients
\be 
F^{X_p,(1)}_{m,1}(t_m) = t_m, \qquad F^{X_p,(1)}_{m,2}(t_m) = \frac{1}{2\pi}.
\ee

Tests of the large-order prediction \eqref{largeoF} are shown in figure~\ref{fig:resurgent_tower}, where it proves convenient to normalize the local-curve free energy against the Gaussian contribution, \textit{i.e.}, we use $\CF_g \equiv F_g - F^{\text{G}}_g$. The large-order growth of this normalized free-energy then corresponds to the second line in \eqref{largeoF}. In the left plot we show the normalized free energies for $t=100$ (multiplied by an appropriate factor to make all numbers of $\mathcal{O}(1)$; the gray dots), alongside the sum in \eqref{largeoF} up to $m=2$ (green), $m=4$ (yellow) and $m=6$ (red). One can clearly see that the agreement with the data gets better and better by including more terms in the tower of corrections in \eqref{largeoF}. In the right plot we further illustrate this by showing how small the error is for $t=24$. We use the normalized free-energy and define the error as $\left| 1 - \CF^{\text{pred},m}_g/\CF^{(0)}_g \right|$, where $\CF^{\text{pred},m}_g$ just corresponds to taking the second line of \eqref{largeoF} up to a given maximum (the colors are the same as the ones used on the left). We see that for $m=6$, at $g=100$, the error is of the order $10^{-58} \%$.

%%%%%%%%%%%%%%%%%%%%%%%%%%%%%%%%%%%%%%%%%%%%%%%%%%%%%%%%%%%%%%%%%
\begin{figure}[t!]
\begin{center}
\includegraphics[width=0.49\textwidth]{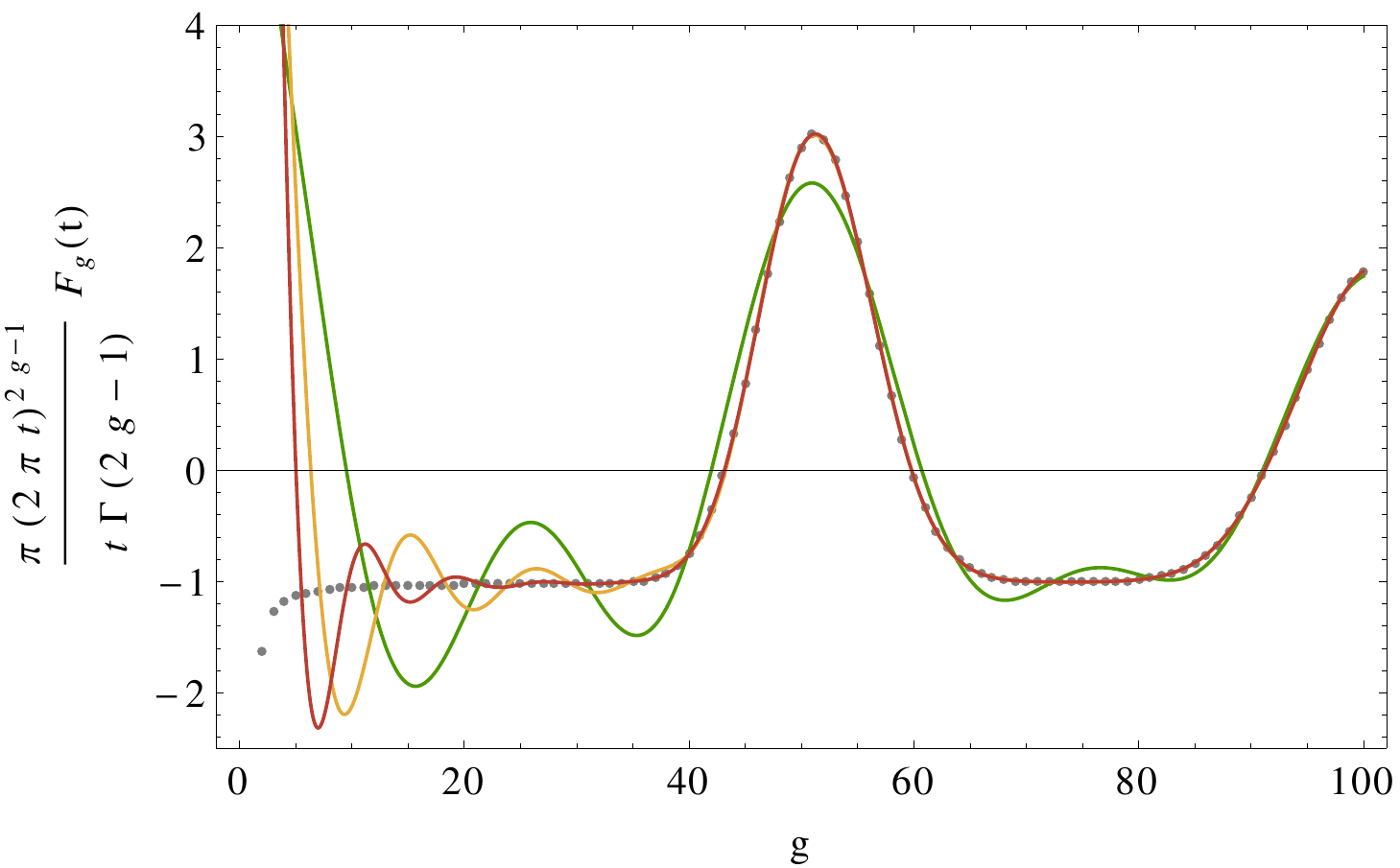}
\hspace{0.02\textwidth}
\includegraphics[width=0.47\textwidth]{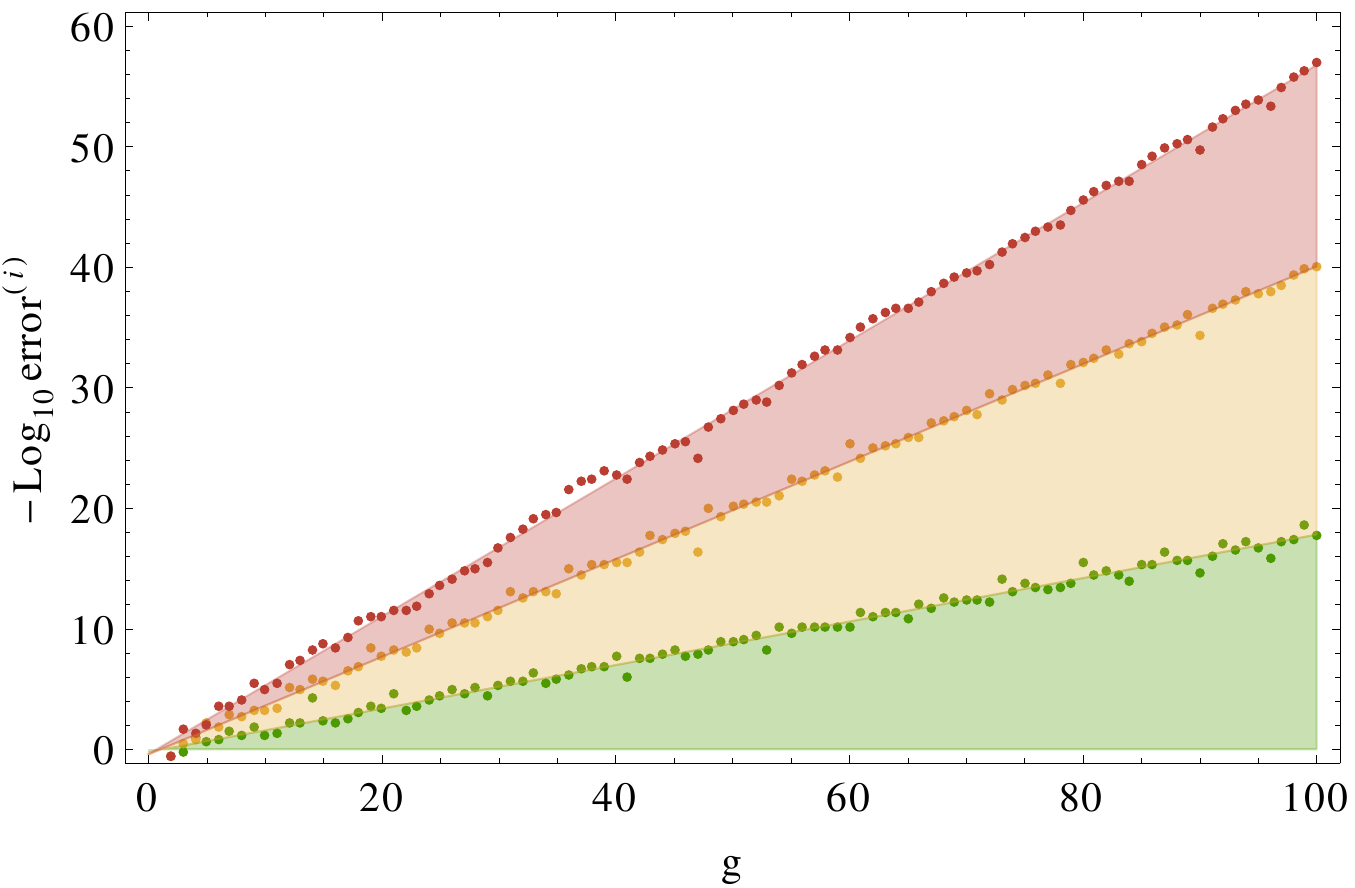} 
\end{center}
\vspace{-1\baselineskip}
\caption{Local curve: Left: Comparison between the $p=3$ free energy, with an appropriate pre-factor (the gray dots), and the predictions coming from \eqref{largeoF} up to $m=2$ (green), $m=4$ (yellow) and $m=6$ (red); showing a quicker convergence the more terms are included. Right: Logarithm of the error in the predictions (with the same colors), this time against the normalized free energy. For $m=6$, at genus $g=100$, the error is roughly of the order of one part in $10^{60}$.}
\label{fig:resurgent_tower}
\end{figure}
%%%%%%%%%%%%%%%%%%%%%%%%%%%%%%%%%%%%%%%%%%%%%%%%%%%%%%%%%%%%%%%%%

Let us comment on the relation to the large-order behavior found in \cite{m06, msw07}. There should be a value of the K\"ahler parameter for which there is an effective competition between the B-cycle action found in \cite{m06, msw07} and the Gaussian-like tower described above. Unfortunately, our data does not allow us to directly look at this interplay, for as one moves towards smaller $t$ the contribution to the free energies is no longer dominated just by the invariants close to $d = \frac{2g-3}{t}$. There will also be other relevant contributions at higher degree, which we do not have enough data to account for. Nonetheless, do notice that an exchange in large-order dominance should be precisely related to this emergence of relevant contributions beyond the large-radius ``peak''.

Given the above large-order behavior of the free energy, we may next deduce its consequences towards the ``diagonal'' growth of GW invariants. After accounting for the appropriate Gaussian correction, it turns out that the GW large-order is of the exact same type as in earlier examples (in this case, $n_0^{(1)}=(-1)^{p-1}$),
\be 
\left. N_{g,d}^\tLC{p}\, Q^d \right|_{g=\frac{t}{2}d+\sh} \sim \sum_{h=0}^{+\infty} \frac{\Gamma \left(2g-\frac{3}{2}-h\right)}{\left( 2\pi t \right)^{2g-\frac{3}{2}-h}}\, \frac{n_0^{(1)}\, t^{\frac{3}{2}-h}}{2^{2h+1}\, \pi^{h+2}}\, \polyname_h(\sh),
\ee
\noindent
where the $\polyname_h(\sh)$ are precisely the polynomials which were introduced in \eqref{eq:GW_coni_large_order}, and which also appeared in the similar large-order results for local $\mathbb{P}^2$ and local $\BP^1 \times \BP^1$. Computational tests on the validity of this expression are shown in figures~\ref{fig:ALL_Kahler_Action} and~\ref{fig:ALL_LOOPS}, with their details and discussion being the same as before. One thus finds that even for theories in different (critical) universality classes, there is some sort of \textit{universal large-order behavior} taking place at large radius\footnote{It would be interesting to compare this to the B-model large-radius universality recently uncovered in \cite{ayz15, c15}.}. This is also very clear in the plots in figures~\ref{fig:ALL_Kahler_Action} and~\ref{fig:ALL_LOOPS}. Furthermore, in the case of the local curve $X_p$, this large-order behavior turns out to be \textit{independent} of $p$ (up to a sign).

%%%%%%%%%%%%%%%%%%%%%%%%%%%%%%%%%%%%%%%%%%%%%%%%%%%%%%%%%%%%%%%%%
\subsection{The Example of Hurwitz Theory}\label{sec:Hur}
%%%%%%%%%%%%%%%%%%%%%%%%%%%%%%%%%%%%%%%%%%%%%%%%%%%%%%%%%%%%%%%%%

Let us now address a slightly more algebraic example, that of Hurwitz theory. Generically, it addresses branched covers of algebraic curves, but herein we restrict to so-called \textit{simple} Hurwitz numbers, denoted by $H_{g,d}^\tPOne\left(1^d\right)$, which count the number of degree-$d$ disconnected coverings of $\mathbb{P}^1$ by a genus-$g$ Riemann surface. These numbers have a combinatorial definition in terms of Young tableaux, but---in line with what we have been doing---they also have a string-theoretic origin. Indeed, Hurwitz theory may be thought of as a topological string theory, as it can be obtained by a particular limit of the A-model on the local curve $X_p$ \cite{cgmps06}. This limit consists in taking
\be 
p \rightarrow +\infty, \qquad t \rightarrow +\infty, \qquad g_{\text{s}} \rightarrow 0,
\ee
\noindent
while the combinations
\be 
g_\tH \equiv p\, g_{\text{s}}, \qquad \rme^{-t_\tH} \equiv (-1)^p\, p^2 \rme^{-t},
\ee
\noindent
are held fixed. A number of results can then be straightforwardly obtained by applying this limit to our results in the previous section.

%%%%%%%%%%%%%%%%%%%%%%%%%%%%%%%%%%%%%%%%%%%%%%%%%%%%%%%%%%%%%%%%%
\subsubsection*{Free Energies and Gromov--Witten Invariants}
%%%%%%%%%%%%%%%%%%%%%%%%%%%%%%%%%%%%%%%%%%%%%%%%%%%%%%%%%%%%%%%%%

The Hurwitz free-energy was shown to satisfy a Toda-like equation in \cite{p99},
\be 
\exp \Big\{ F^\tH (t_\tH-g_\tH) + F^\tH (t_\tH) + F^\tH (t_\tH+g_\tH) \Big\} = g_\tH^2\, \rme^{t_\tH}\, \partial_{t_\tH}^2 F^\tH (t_\tH).
\label{diffHur}
\ee
\noindent
However, regarding Hurwitz theory as a limiting local-curve in the sense explained above, implies one may compute the genus-$g$ free energy directly in the B-model as
\be 
F_g^{\tH} = \frac{1}{\left(1-\chi\right)^{5(g-1)}}\, \sum_{n=1}^{3g-3} a_{g,n}^\tH\, \chi^n.
\label{FgHur}
\ee
\noindent
Here, the new variable $\chi$ is related to the local-curve B-model modulus $w$ as
\be 
w - 1 = -\frac{\chi}{p^2},
\ee
\noindent
and in the Hurwitz limit the mirror map becomes
\be 
\rme^{-t_\tH} = \chi\, \rme^{-\chi}.
\label{mirror_Hur}
\ee
\noindent
In \eqref{FgHur} one needs to use the appropriate limit of the $a_{g,n}(p)$ coefficients from \eqref{FgBmodel}, defined as
\be 
a_{g,n}^\tH = \lim_{p \rightarrow +\infty} p^{8 \left(g-1\right)-2n} \left(-1\right)^n a_{g,n}.
\label{coeffsHur}
\ee
\noindent
From explicit results in appendix~\ref{sec:appendix_local} one can see that only some of the coefficients contribute in the limit. The coefficients $a_{g,n}^\tH$ also turn out to be related to the perturbative free energies of 2d gravity as, under the appropriate double-scaling limit, the difference equation \eqref{diffHur} reduces to the Painlev\'e I equation. A large-order analysis of the Hurwitz free energy was performed in \cite{msw07} finding that large-order effects were, as expected, governed by the $p \rightarrow +\infty$ limit of the B-cycle instanton-action that controlled the large-order effects of the local curve.

In the A-model formulation, GW invariants are defined as usual. Furthermore, without surprise, they may also be obtained from the limit
\be 
N_{g,d}^\tH = \frac{H_{g,d}^\tPOne\left(1^d\right)^{\bullet}}{\left(2g + 2d -2\right)!} = \lim_{p\to +\infty} p^{2-2g-2d}\, N_{g,d}^\tLC{p}.
\label{GW_Hur}
\ee
\noindent
Here we have written the GW invariants in terms of \textit{connected}, simple Hurwitz numbers $H_{g,d}^\tPOne\left(1^d\right)^{\bullet}$. In practice, we computed the $N_{g,d}^\tH$ along the same lines as we computed the $N_{g,d}^\tLC{p}$, \textit{i.e.}, starting from the partition function, then computing the free energy at fixed degree, and finally expanding in powers of $g_\tH$. We have computed \eqref{GW_Hur} up to the totals schematically shown in figure~\ref{fig:dataH} (also see appendix~\ref{sec:appendix_Hur_d}). For $g\leq 17$, data up to $d=50$ is enough to fix all the coefficients in \eqref{FgHur}, and we may thus compute GW invariants for any degree. This data will not be crucial in the following, since several results just follow from the ``finite $p$'' case addressed before. Nonetheless, the ability to generate more data also allows us to make predictions to higher orders.

%%%%%%%%%%%%%%%%%%%%%%%%%%%%%%%%%%%%%%%%%%%%%%%%%%%%%%%%%%%%%%%%%
\begin{figure}[t!]
\begin{center}
\raisebox{0.5cm}{\includegraphics[width=0.5\textwidth]{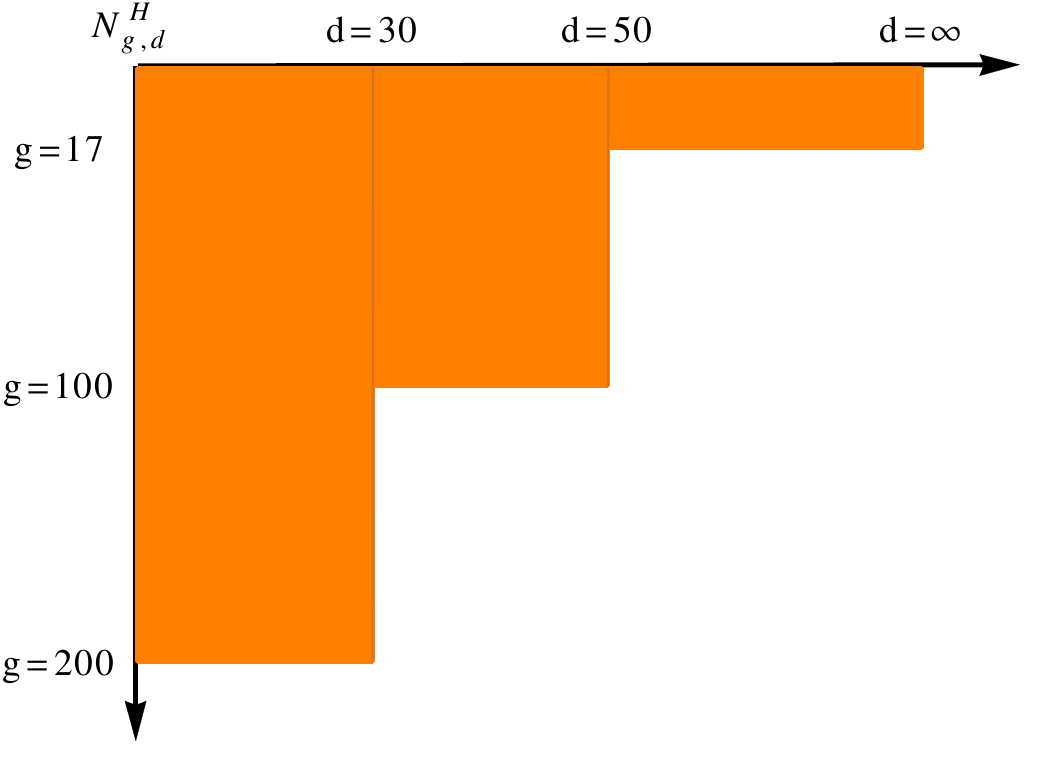}}
\end{center}
\vspace{-1\baselineskip}
\caption{Maximum degree and genus of the GW invariants we computed for Hurwitz theory. These are related to simple Hurwitz numbers via \eqref{GW_Hur}. For $g\leq17$ we have all the required data to fix \eqref{FgHur} and thus can compute GW invariants for any degree.}
\label{fig:dataH}
\end{figure}
%%%%%%%%%%%%%%%%%%%%%%%%%%%%%%%%%%%%%%%%%%%%%%%%%%%%%%%%%%%%%%%%%

%%%%%%%%%%%%%%%%%%%%%%%%%%%%%%%%%%%%%%%%%%%%%%%%%%%%%%%%%%%%%%%%%
\subsubsection*{Analysis of Large-Degree Growth}
%%%%%%%%%%%%%%%%%%%%%%%%%%%%%%%%%%%%%%%%%%%%%%%%%%%%%%%%%%%%%%%%%

The asymptotic\footnote{Reference \cite{op01} also studies asymptotics of Hurwitz numbers $H_{g,\mu}$, with $\mu$ a partition with $\ell$ parts $\mu_1, \ldots, \mu_\ell$. However, the asymptotics considered in \cite{op01} are in the limit $\lim_{N\to+\infty} H_{g,N\mu}$. This is conceptually different from our large-degree expansion of simple Hurwitz numbers: our case corresponds to a partition $(1,\ldots,1)$ with $d$ entries (the number which is growing), while in the results of \citep{op01} the length of the partition is always kept fixed.} growth of Hurwitz numbers at large degree $d$, with fixed genus, may be extracted from what we found earlier for the local curve. In particular, one can apply the limit \eqref{GW_Hur} directly to \eqref{GWlarged}, so that after introducing\footnote{In this limit we can equally take $f \sim p^2 \rightarrow +\infty$.}
\bea
\widetilde{c}^{(j)}_{g,k} &=& \frac{2^{\frac{1}{2} \left( k - 5\left(g-1\right) \right)}}{\Gamma \left(\frac{1}{2} \left(5\left(g-1\right)-k-j\right) \right)}\, \sum_{j_0=1}^{j} \widehat{c}_{j_0}^{\tH,(j)} \left( 5 \left(g-1\right)-k \right)^{j_0}, \quad \widehat{c}_{j_0}^{\tH,(j)} = \lim_{f \rightarrow +\infty} \frac{\widehat{c}_{j_0}^{(j)}(f)}{\left( f \left( f-1 \right) \right)^{\frac{j}{2}}} , \nonumber \\
\alpha^\tH_{g,k} &=& \lim_{f \rightarrow +\infty} f^{4 \left(g-1\right) - k}\, \alpha_{g,k} (f),
\eea
\noindent
we immediately arrive at
\be 
N_{\boldsymbol{g},d}^\tH \sim \rme^d\, d^{\frac{5}{2} \left(\boldsymbol{g}-1\right) - 1}\,  \sum_{j=0}^{+\infty}\, \sum_{j'=0}^{\text{Min} \left(j, \,3(\boldsymbol{g}-1)\right)} \widetilde{c}^{(j-j')}_{\boldsymbol{g},j'}\, \alpha^\tH_{\boldsymbol{g},j'}\, d^{-\frac{j}{2}}.
\label{Hur_larged}
\ee

Another route to this result would be to directly write GW invariants for Hurwitz theory, as we did in \eqref{GW_lagrange} for the local curve (one would now have to use the Hurwitz mirror map \eqref{mirror_Hur}). In this case, one obtains the GW invariants as\footnote{We use the $\alpha^\tH_{g,k}$ coefficients for convenience; they are related to the $a_{g,i}^\tH$ in \eqref{coeffsHur} via $(-1)^k \alpha^\tH_{g,k} = \sum_{i=1}^{3g-3} \binom{i}{k} a_{g,i}^\tH$. They could just as well be fixed by using the mirror map and the Toda equation \eqref{diffHur}.}
\be 
N_{\boldsymbol{g},d}^\tH = \frac{(-1)^{d}}{d}\, \sum_{k=0}^{3\boldsymbol{g}-3} \alpha^\tH_{\boldsymbol{g},k} \left( 5 \left(\boldsymbol{g}-1\right)-k \right) L_{d-1}^{k-d-5\left(\boldsymbol{g}-1\right)}(d),
\label{GW_lagrange_H}
\ee
\noindent
where the $L_{m}^{a}(z)$ are the associated Laguerre polynomials. Since the sum in \eqref{GW_lagrange_H} now runs over fewer values, it becomes easier to fix the necessary coefficients and generate GW invariants to arbitrarily large degree. Associated to the fact that the coefficients $\widetilde{c}^{(j)}_{g,k}$ no longer depend on an extra parameter, we can find the large-degree expansion \eqref{Hur_larged} to very high order with little effort. We present some of these results in appendix~\ref{sec:appendix_Hur_d}.

%%%%%%%%%%%%%%%%%%%%%%%%%%%%%%%%%%%%%%%%%%%%%%%%%%%%%%%%%%%%%%%%%
\subsubsection*{Analysis of Large-Genus Growth}
%%%%%%%%%%%%%%%%%%%%%%%%%%%%%%%%%%%%%%%%%%%%%%%%%%%%%%%%%%%%%%%%%

Hurwitz theory does not have an $abc$-formula, because the would-be GV invariants are no longer integers. Nonetheless, one can proceed empirically, using numerics and Richardson extrapolation, in order to find the growth of Hurwitz numbers for large genus, while at fixed degree. At low degree, the large-genus expansions actually \textit{truncate}. For instance, for $d=2,3,4$ we find
\bea 
H_{g,2}^\tPOne \left(1^d\right)^{\bullet} &=& \frac{1}{2}, \\
H_{g,3}^\tPOne \left(1^d\right)^{\bullet} &=& \frac{3^{2g-2}}{2}, \\
H_{g,4}^\tPOne \left(1^d\right)^{\bullet} &=& \frac{1}{2} \left( 2^{2g+2}-1 \right) \left(3^{2g+4}-1 \right).
\eea
\noindent
That is no longer the case for degree $d \geq 5$, where we now find
\bea 
H_{g,\boldsymbol{d}}^\tPOne \left(1^d\right)^{\bullet} &=& \frac{2}{\left(\boldsymbol{d}!\right)^2}\left(\frac{\boldsymbol{d} \left( \boldsymbol{d}-1 \right)}{2}\right)^{2\boldsymbol{d}+2g-2} - \frac{2}{\left( \left(\boldsymbol{d}-1\right)! \right)^2} \left( \frac{\left(\boldsymbol{d}-1\right) \left(\boldsymbol{d}-2\right)}{2} \right)^{2\boldsymbol{d}+2g-2} + 
\label{large_g_H} \\
&&
\hspace{-30pt}
+ \frac{2}{\boldsymbol{d}^2 \left( \left(\boldsymbol{d}-2\right)! \right)^2} \left( \frac{\boldsymbol{d} \left(\boldsymbol{d}-3\right)}{2} \right)^{2\boldsymbol{d}+2g-2} - \frac{1}{2 \left( \left(\boldsymbol{d}-2\right)! \right)^2} \left( \frac{\left(\boldsymbol{d}^2-5d+8\right)}{2} \right)^{2\boldsymbol{d}+2g-2} + \cdots. \nonumber
\eea
\noindent
These results can also be easily derived by computing the free energy directly for low degree. For the purpose of illustration, let us show how the agreement between the exact $H_{g,d}^\tPOne\left(1^d\right)^{\bullet}$ (for $d=6$ and $g=100$) and its prediction from \eqref{large_g_H} improves, as we include more terms. Note that this is an integer number with $241$ digits, but we only display the first $82$. One has:
\bea
H_{100,6}^\tPOne\left(1^d\right)^{\bullet} &= \mathsmaller{36773029021136586120108822348086934417891861531447353197011119061184878815704795302}\ldots \nonumber \\
\text{1-term} &= \mathsmaller{\textcolor{pert}{36773029021136586120108822348086934}556780750396609834115336352765962460171085765176}\ldots \nonumber \\
\text{2-terms} &= \mathsmaller{\textcolor{pert}{36773029021136586120108822348086934}\textcolor{1inst}{4178918615}07720945226447463877073571282196876287}\ldots \nonumber \\
\text{3-terms} &= \mathsmaller{\textcolor{pert}{36773029021136586120108822348086934}\textcolor{1inst}{4178918615}\textcolor{2inst}{3144735319701111906118}7444102489809778}\ldots \nonumber \\
\text{4-terms} &= \mathsmaller{\textcolor{pert}{36773029021136586120108822348086934}\textcolor{1inst}{4178918615}\textcolor{2inst}{3144735319701111906118}\textcolor{3inst}{4878815704795}232}\ldots \nonumber
\eea

%%%%%%%%%%%%%%%%%%%%%%%%%%%%%%%%%%%%%%%%%%%%%%%%%%%%%%%%%%%%%%%%%
\subsubsection*{Combined/Diagonal Large-Growth in Genus and Degree}
%%%%%%%%%%%%%%%%%%%%%%%%%%%%%%%%%%%%%%%%%%%%%%%%%%%%%%%%%%%%%%%%%

Uncovering the combined growth in genus and degree for Hurwitz theory is a harder problem than in previous examples. The main reason being that in this example the ``large-radius peak'', where GW invariants near $d=(2g-3)/t$ give the main contribution to the free energy, no longer seems to exist. We did still numerically find another ``critical-point peak'', as shown in figure~\ref{fig:hurwitz_leading_degrees}, but with the data we have available we were not able to find a linear relation, nor to uncover what the large-order behavior should be, analytically.

%%%%%%%%%%%%%%%%%%%%%%%%%%%%%%%%%%%%%%%%%%%%%%%%%%%%%%%%%%%%%%%%%
\begin{figure}[t!]
\centering
\includegraphics[width=0.46\textwidth]{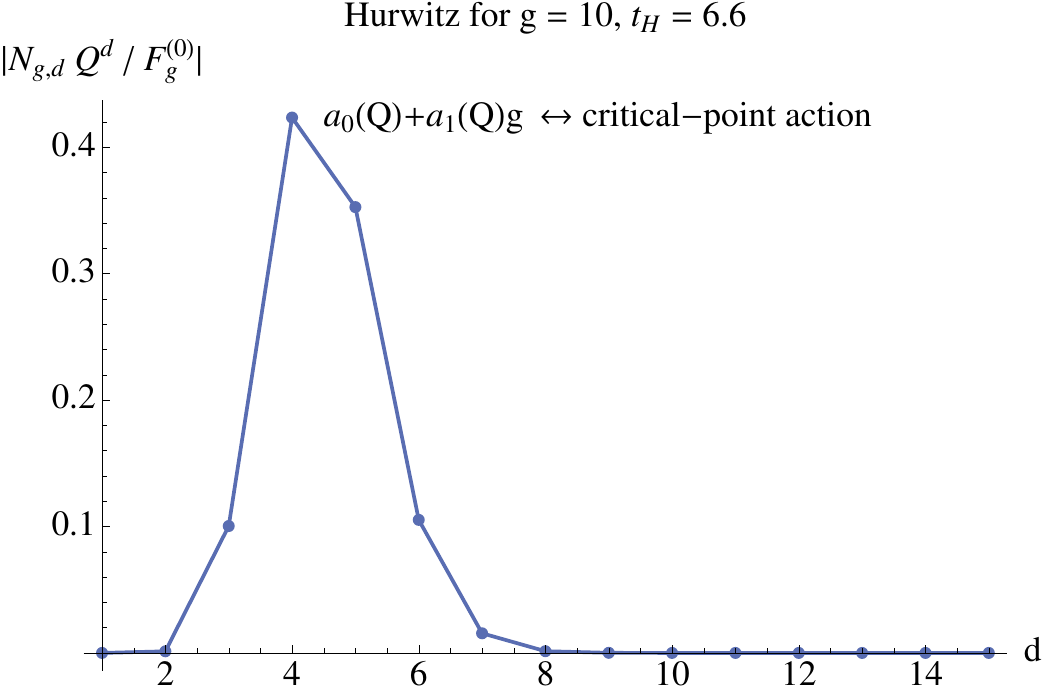}
\caption{Hurwitz: Graphical representation of which GW invariants contribute the most to a free energy $F^{(0)}_g(Q)$, for fixed values of $g$ and $Q=\rme^{-t_{\text{H}}}$. This time around we find a single saddle-point seemingly corresponding to the critical-point action.}
\label{fig:hurwitz_leading_degrees}
\end{figure}
%%%%%%%%%%%%%%%%%%%%%%%%%%%%%%%%%%%%%%%%%%%%%%%%%%%%%%%%%%%%%%%%%

%%%%%%%%%%%%%%%%%%%%%%%%%%%%%%%%%%%%%%%%%%%%%%%%%%%%%%%%%%%%%%%%%
\subsection{The Example of the Compact Quintic}
%%%%%%%%%%%%%%%%%%%%%%%%%%%%%%%%%%%%%%%%%%%%%%%%%%%%%%%%%%%%%%%%%

For our final example, we shall consider a compact geometry, in comparison to the non-compact local geometries we have been addressing up to now. This is actually the first example in which mirror symmetry was explicitly worked out and GW invariants systematically computed \cite{cdgp91}, the quintic CY threefold. The mirror of the quintic is described by the equation 
\be
\sum_{i=1}^5 x_i^5 - \frac{1}{z}\, \prod_{i=1}^5 x_i = 0,
\ee
\noindent
where $z$ captures the complex structure of the CY manifold. We will follow the notation in \cite{hkq06}.

%%%%%%%%%%%%%%%%%%%%%%%%%%%%%%%%%%%%%%%%%%%%%%%%%%%%%%%%%%%%%%%%%
\subsubsection*{Free Energies and Gromov--Witten Invariants}
%%%%%%%%%%%%%%%%%%%%%%%%%%%%%%%%%%%%%%%%%%%%%%%%%%%%%%%%%%%%%%%%%

As there is a single modulus, there is also a single Picard--Fuchs equation for the periods of this geometry, namely
\begin{equation}
\left\{ \left( z \partial_z \right)^4 - 5 z \left( 5 z \partial_z + 1 \right)\left( 5 z \partial_z + 2 \right)\left( 5 z \partial_z + 3 \right)\left( 5 z \partial_z + 4 \right) \right\} f(z) = 0.
\end{equation}
\noindent
From its solutions, we find the mirror map and the genus-zero free energy,
\begin{align}
-t &= \log z + 770 z + 717825 z^2 + \frac{3225308000}{3} z^3 + \cdots \equiv \log Q, \\
F^{(0)}_0 &= c_3 t^3 + c_2 t^2 + c_1 t + 2875 Q + \frac{4876875}{8} Q^2 + \frac{8564575000}{27} Q^3 + \cdots.
\end{align}
\noindent
Akin to what happened for the local geometries, here the higher-genus free energies can also be described in compact form in terms of a few generators (or propagators). Each of them has a holomorphic expansion around the large-radius point ($Q=0$), from which one can read the GW invariants. For example,
\begin{equation}
F^{(0)}_2 = \frac{575}{48} Q + \frac{5125}{2} Q^2 + \frac{7930375}{6} Q^3 + \cdots.
\end{equation}
\noindent
In this work we use the free energies which were computed in \cite{hkq06}, and which are available online\footnote{\texttt{http://uw.physics.wisc.edu/$\sim$strings/aklemm/highergenusdata/}}. In appendix~\ref{sec:appendix:quintic} we list a sample of the first GW invariants, and figure~\ref{fig:GWtotalQUINTIC} schematically represents the ones we used in our upcoming analysis. Note that we now have significantly less data than for the earlier non-compact examples, implying we will not have as many results.

%%%%%%%%%%%%%%%%%%%%%%%%%%%%%%%%%%%%%%%%%%%%%%%%%%%%%%%%%%%%%%%%%
\begin{figure}[t!]
\begin{center}
\raisebox{0.5cm}{\includegraphics[width=0.5\textwidth]{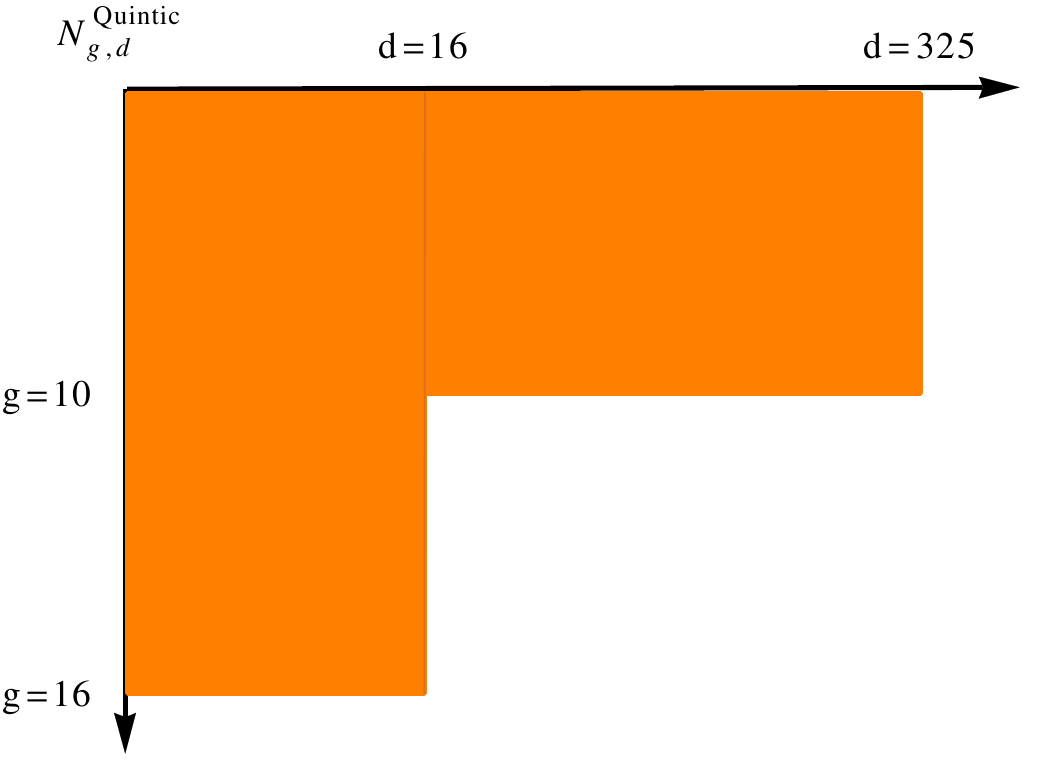}}
\end{center}
\vspace{-1\baselineskip}
\caption{Maximum degree and genus of the GW invariants computed for the quintic.}
\label{fig:GWtotalQUINTIC}
\end{figure}
%%%%%%%%%%%%%%%%%%%%%%%%%%%%%%%%%%%%%%%%%%%%%%%%%%%%%%%%%%%%%%%%%

Another important point is that there are essentially no studies of nonperturbative sectors for the quintic, mainly due to a lack of data to drive the analysis. However, it is natural to guess that there is an instanton action associated to the conifold point (located at $z = 5^{-5}$), $A_{\text{c}} = 2\pi T_{\text{c}}$, which is proportional to the flat coordinate $T_{\text{c}}$ and vanishing at the conifold point. A test of this instanton action is shown in figure~\ref{fig:quintic_coni_action_num}, precisely confirming that this is indeed the case. We should also expect a K\"ahler instanton action, but we do not have enough free energies available to report definite results.

%%%%%%%%%%%%%%%%%%%%%%%%%%%%%%%%%%%%%%%%%%%%%%%%%%%%%%%%%%%%%%%%%
\begin{figure}[t!]
\begin{center}
\includegraphics[width=0.5\textwidth]{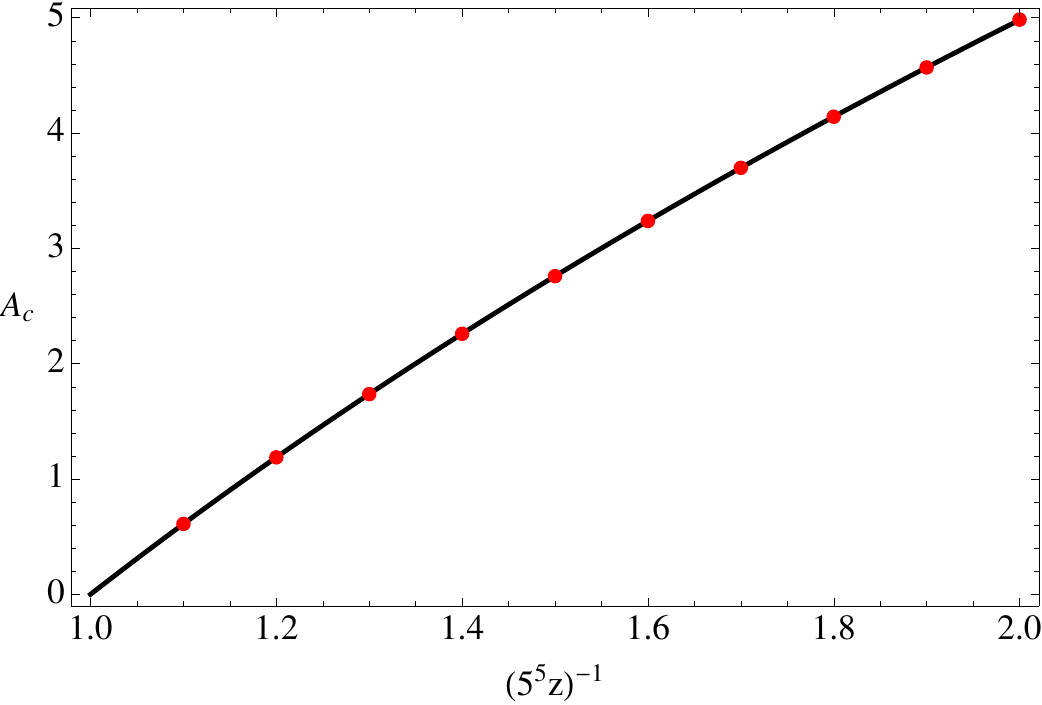}
\end{center}
\caption{Quintic: Near the conifold point, the instanton action $A_{\text{c}} = 2\pi T_{\text{c}}$ controls the factorial growth of the free energies. In the figure we plot the analytic dependence of $A_{\text{c}}$ against its numerical values computed from the large-genus growth of $F^{(0)}_g$, with a very clean match.}
\label{fig:quintic_coni_action_num}
\end{figure}
%%%%%%%%%%%%%%%%%%%%%%%%%%%%%%%%%%%%%%%%%%%%%%%%%%%%%%%%%%%%%%%%%

%%%%%%%%%%%%%%%%%%%%%%%%%%%%%%%%%%%%%%%%%%%%%%%%%%%%%%%%%%%%%%%%%
\subsubsection*{Analysis of Large-Degree Growth}
%%%%%%%%%%%%%%%%%%%%%%%%%%%%%%%%%%%%%%%%%%%%%%%%%%%%%%%%%%%%%%%%%

The large-degree growth at fixed genus was already considered in \cite{bcov93}, being in the same universality class as local $\BP^2$ or local $\BP^1 \times \BP^1$. In this case, the familiar asymptotic formula holds, 
\begin{equation}
N^{\tQuintic}_{\boldsymbol{g},d} \sim c_{\boldsymbol{g}}\, d^{2\boldsymbol{g}-3}\, \rme^{d t_{\text{c}}} \left( \log d \right)^{2\boldsymbol{g}-2},
\end{equation}
\noindent
where now $t_{\text{c}} := t(z = 5^{-5}) = 7.58995\ldots$. One can numerically check the value of this critical exponent, $t_{\text{c}}$, as well as the powers in  $d^{2\boldsymbol{g}-3}$ and $\left( \log d \right)^{2\boldsymbol{g}-2}$, using the same asymptotic techniques described for local $\BP^2$ and ABJM. We show these numerical results in figures~\ref{fig:comp_geom_5_large_degree_tc} and \ref{fig:comp_geom_5_large_degree_2g-3_and_2g-2}.

%%%%%%%%%%%%%%%%%%%%%%%%%%%%%%%%%%%%%%%%%%%%%%%%%%%%%%%%%%%%%%%%%
\begin{figure}[t!]
\centering
\includegraphics[width=0.6\textwidth]{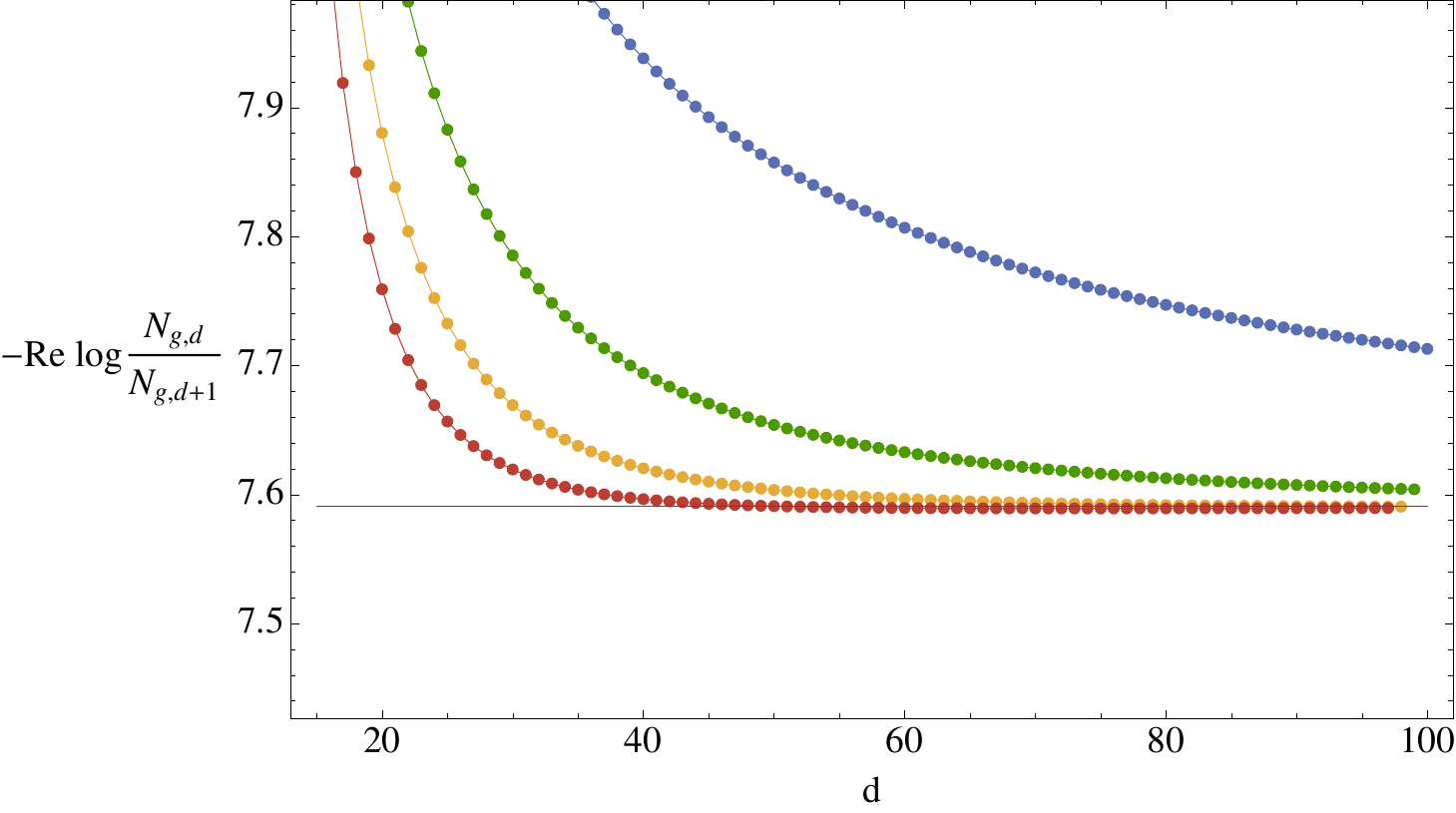}
\caption{Quintic: The exponent $t_{\text{c}}$ in the growth of $N_{\boldsymbol{g},d}$ is captured from the ratio of two consecutive GW invariants, when the degree is large. We plot that ratio alongside three Richardson extrapolations, which are clearly converging faster towards the expected result (up to a numerical relative error of about $0.2\%$).}
\label{fig:comp_geom_5_large_degree_tc}
\end{figure}
%%%%%%%%%%%%%%%%%%%%%%%%%%%%%%%%%%%%%%%%%%%%%%%%%%%%%%%%%%%%%%%%%

%%%%%%%%%%%%%%%%%%%%%%%%%%%%%%%%%%%%%%%%%%%%%%%%%%%%%%%%%%%%%%%%%
\begin{figure}[t!]
\centering
\includegraphics[width=0.46\textwidth]{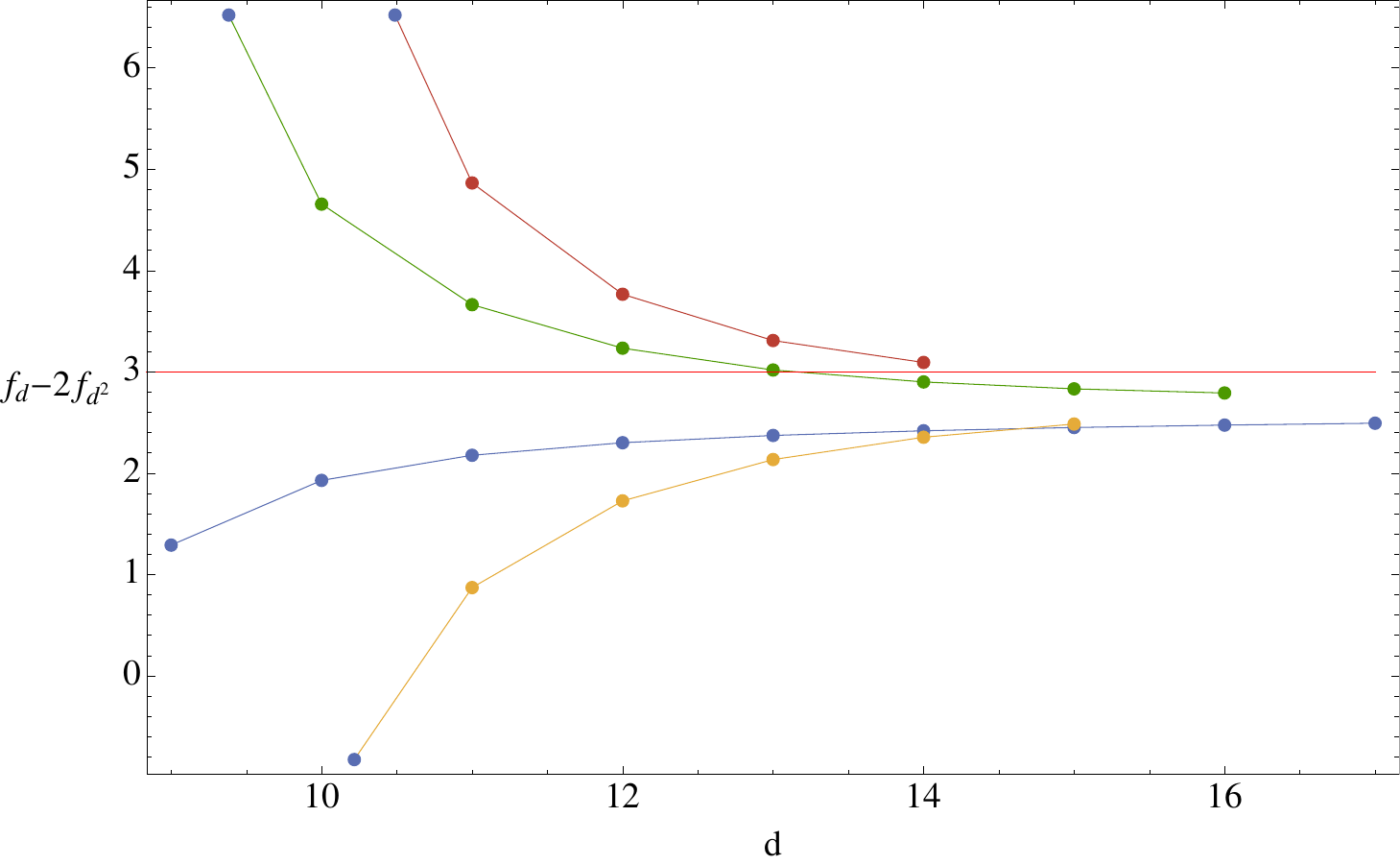}
\hspace{0.05\textwidth}
\includegraphics[width=0.46\textwidth]{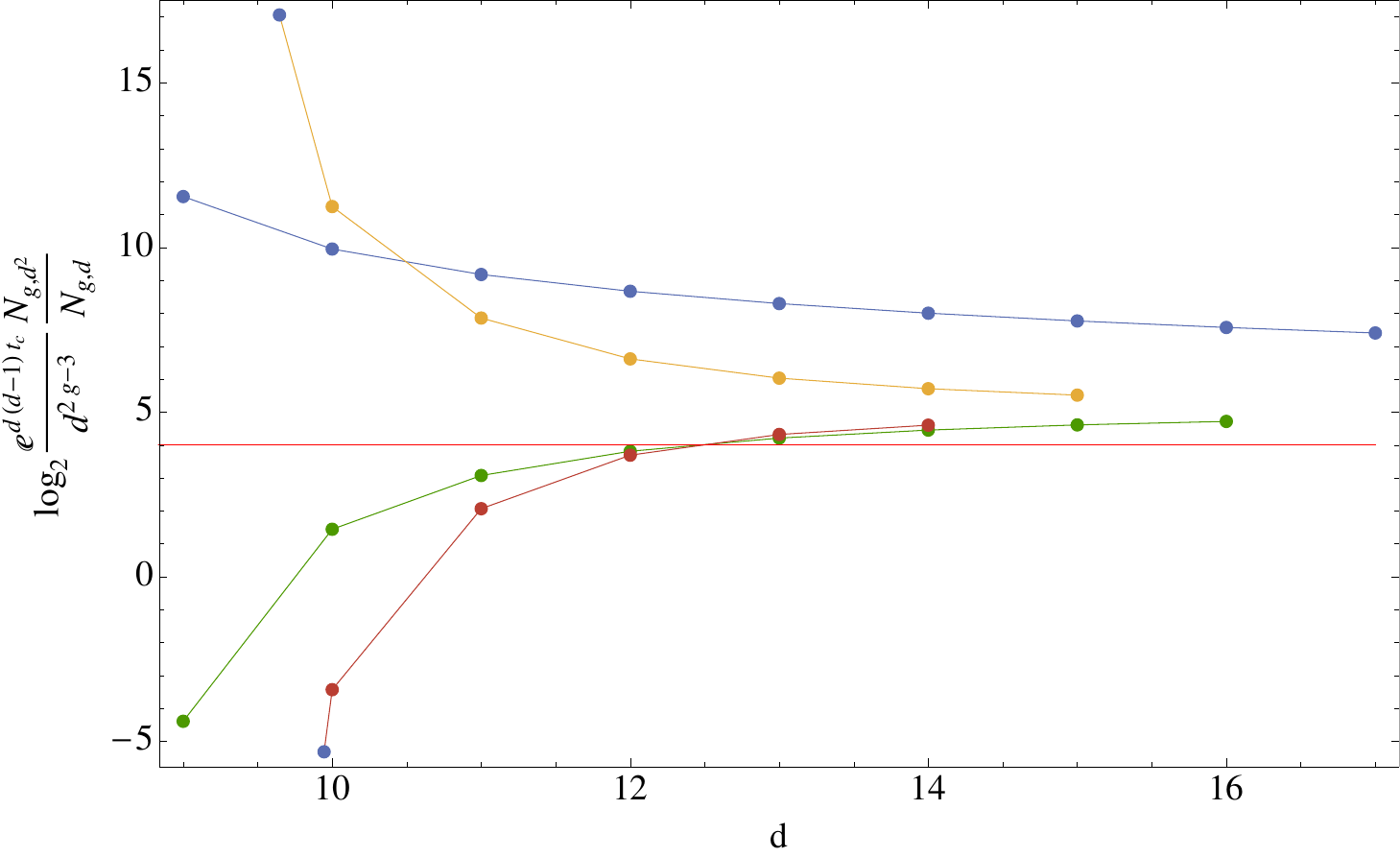}
\caption{Quintic: On the left we address the exponent $2\boldsymbol{g}-3$, which is the leading large-order term in $f_d - 2 f_{d^2}$. We have data up to $d=325$ so that the horizontal axis can only reach $d = 17$. The plot illustrates the first few Richardson transforms for $g=3$, converging faster towards the expected result (up to a numerical relative error of about $3\%$). On the right we address the exponent $2\boldsymbol{g}-2$ of the logarithm $\log d$, which is the leading term in the sequence \eqref{eq:localP2_gammacombination}. We plot the first few Richardson transforms for  $g=3$, converging faster towards the expected result (up to a numerical relative error of about $15\%$).}
\label{fig:comp_geom_5_large_degree_2g-3_and_2g-2}
\end{figure}
%%%%%%%%%%%%%%%%%%%%%%%%%%%%%%%%%%%%%%%%%%%%%%%%%%%%%%%%%%%%%%%%%

%%%%%%%%%%%%%%%%%%%%%%%%%%%%%%%%%%%%%%%%%%%%%%%%%%%%%%%%%%%%%%%%%
\subsubsection*{Analysis of Large-Genus Growth}
%%%%%%%%%%%%%%%%%%%%%%%%%%%%%%%%%%%%%%%%%%%%%%%%%%%%%%%%%%%%%%%%%

The fixed-degree, large-genus expansion of the GW invariants determines the $abc$-coefficients for the quintic in the same way it did for other geometries. In appendix \ref{sec:appendix:quintic} we present a sample of other such coefficients. Unfortunately, the scarce data available limits the analysis of the $b$-coefficients, and we have no other large-genus results to present for this example.

%%%%%%%%%%%%%%%%%%%%%%%%%%%%%%%%%%%%%%%%%%%%%%%%%%%%%%%%%%%%%%%%%
\subsubsection*{Combined/Diagonal Large-Growth in Genus and Degree}
%%%%%%%%%%%%%%%%%%%%%%%%%%%%%%%%%%%%%%%%%%%%%%%%%%%%%%%%%%%%%%%%%

As in previous examples, also for the quintic we find two saddle-points which are illustrated in figure~\ref{fig:quintic_leading_degrees}. They are associated to the K\"ahler instanton action, located at $d= (2g-3)/t$, and to the conifold instanton action, located at $d = a_0 (Q) + a_1 (Q)\, g$, just like before. 

%%%%%%%%%%%%%%%%%%%%%%%%%%%%%%%%%%%%%%%%%%%%%%%%%%%%%%%%%%%%%%%%%
\begin{figure}[t!]
\begin{center}
\includegraphics[width=0.46\textwidth]{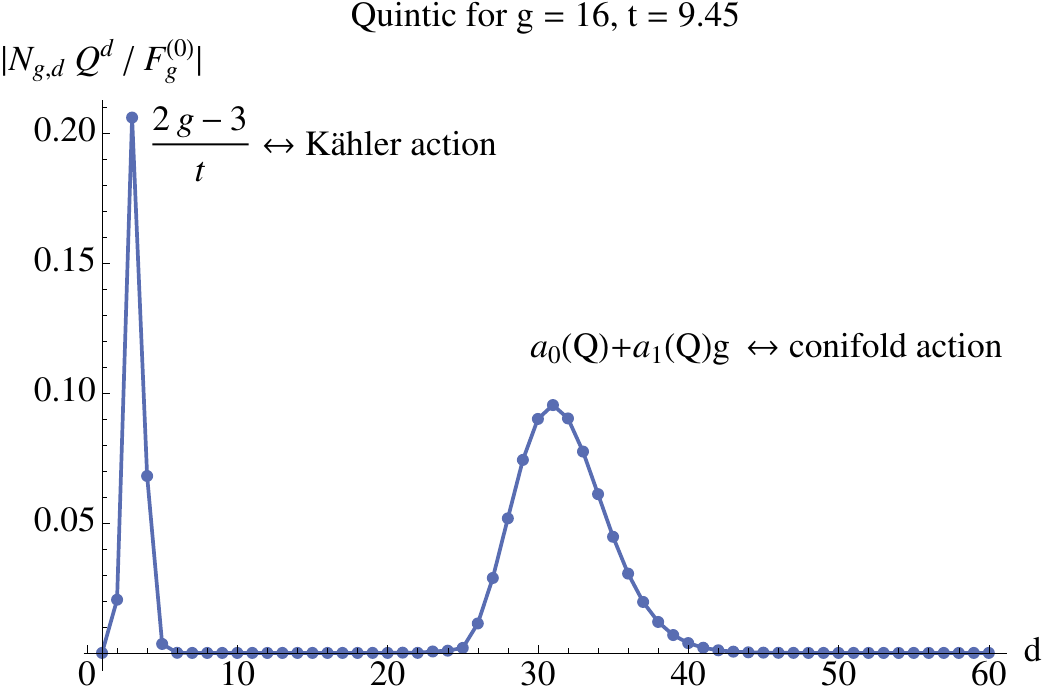}
\end{center}
\caption{Quintic: Graphical representation of which GW invariants contribute the most to a free energy $F^{(0)}_g (Q)$, for fixed values of $g$ and $Q=\rme^{-t}$. The values of $g$ and $t$ are carefully chosen so that both saddles are clearly visible in the same plot.}
\label{fig:quintic_leading_degrees}
\end{figure}
%%%%%%%%%%%%%%%%%%%%%%%%%%%%%%%%%%%%%%%%%%%%%%%%%%%%%%%%%%%%%%%%%

%%%%%%%%%%%%%%%%%%%%%%%%%%%%%%%%%%%%%%%%%%%%%%%%%%%%%%%%%%%%%%%%%
\subsubsection*{K\"ahler Leading Degree}
%%%%%%%%%%%%%%%%%%%%%%%%%%%%%%%%%%%%%%%%%%%%%%%%%%%%%%%%%%%%%%%%%

The scarce amount of available data does not let us check that the leading degree is $d= (2g-3)/t$ in the same way as we did for local $\BP^2$ or ABJM. Nevertheless, we can assume that this indeed holds, and then explore the asymptotics of $\left. N^{\tQuintic}_{g,d}\, Q^d \right|_{g= \frac{t}{2}d+\sh}$, just as we did in \eqref{eq:GW_localP2_large_order_Kahler}. We find that the same formula applies, but with the appropriate GV invariant $n_0^{(1)} = 2875$. Computational tests on the validity of such expression are shown in figures~\ref{fig:ALL_Kahler_Action} and~\ref{fig:ALL_LOOPS}, with their details and discussion being the exact same as before.

%%%%%%%%%%%%%%%%%%%%%%%%%%%%%%%%%%%%%%%%%%%%%%%%%%%%%%%%%%%%%%%%%
\subsubsection*{Conifold Leading Degree}
%%%%%%%%%%%%%%%%%%%%%%%%%%%%%%%%%%%%%%%%%%%%%%%%%%%%%%%%%%%%%%%%%

For the conifold leading degree we can do better. Figure~\ref{fig:comp_geom_5_con_1overa0a1} shows the dependence of $a_0$ and $a_1$ upon the modulus $t$. The inverse of the slope, $a_1^{-1}$, resembles a straight line
\begin{equation}
a_1 (Q)^{-1} = \left(-2.46 \pm 0.01\right) + \left(0.328 \pm 0.002\right) t, \qquad r^2 = 0.9995.
\end{equation}

%%%%%%%%%%%%%%%%%%%%%%%%%%%%%%%%%%%%%%%%%%%%%%%%%%%%%%%%%%%%%%%%%
\begin{figure}[t!]
\centering
\includegraphics[width=0.45\textwidth]{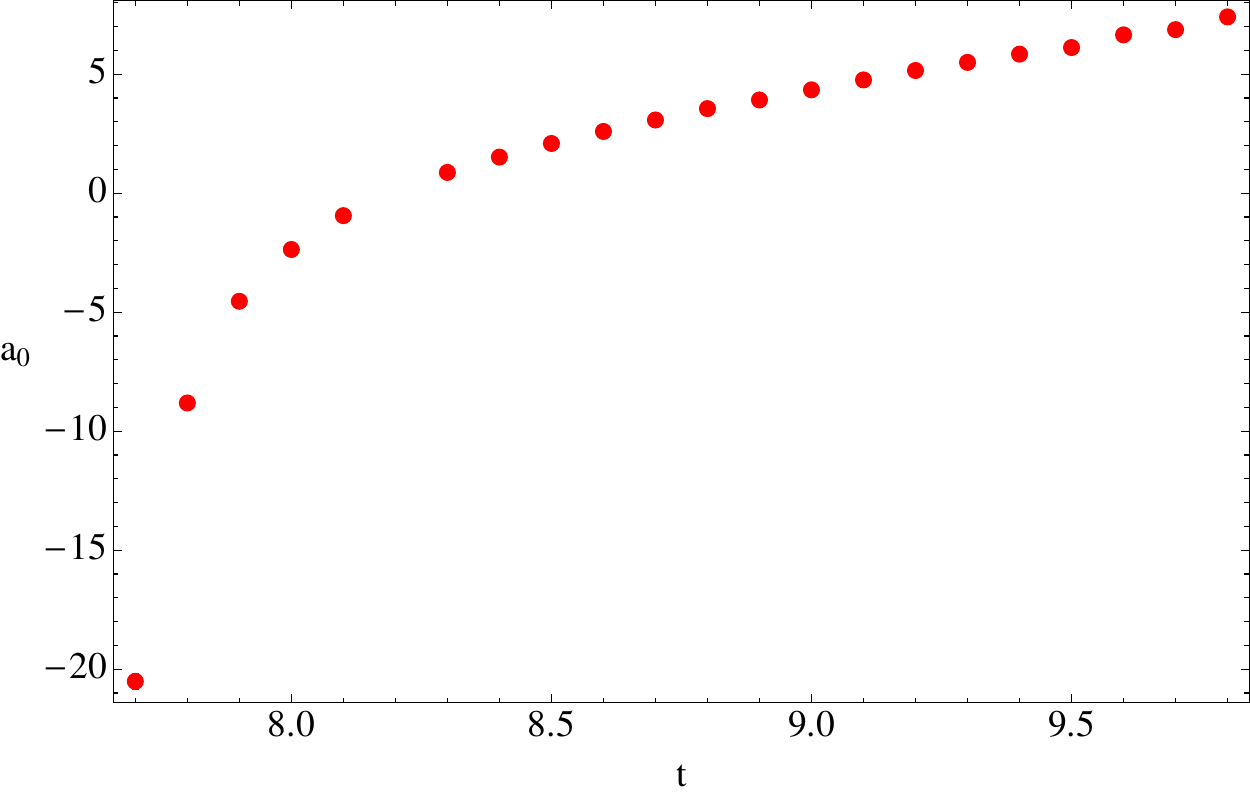}
\hspace{0.05\textwidth}
\includegraphics[width=0.45\textwidth]{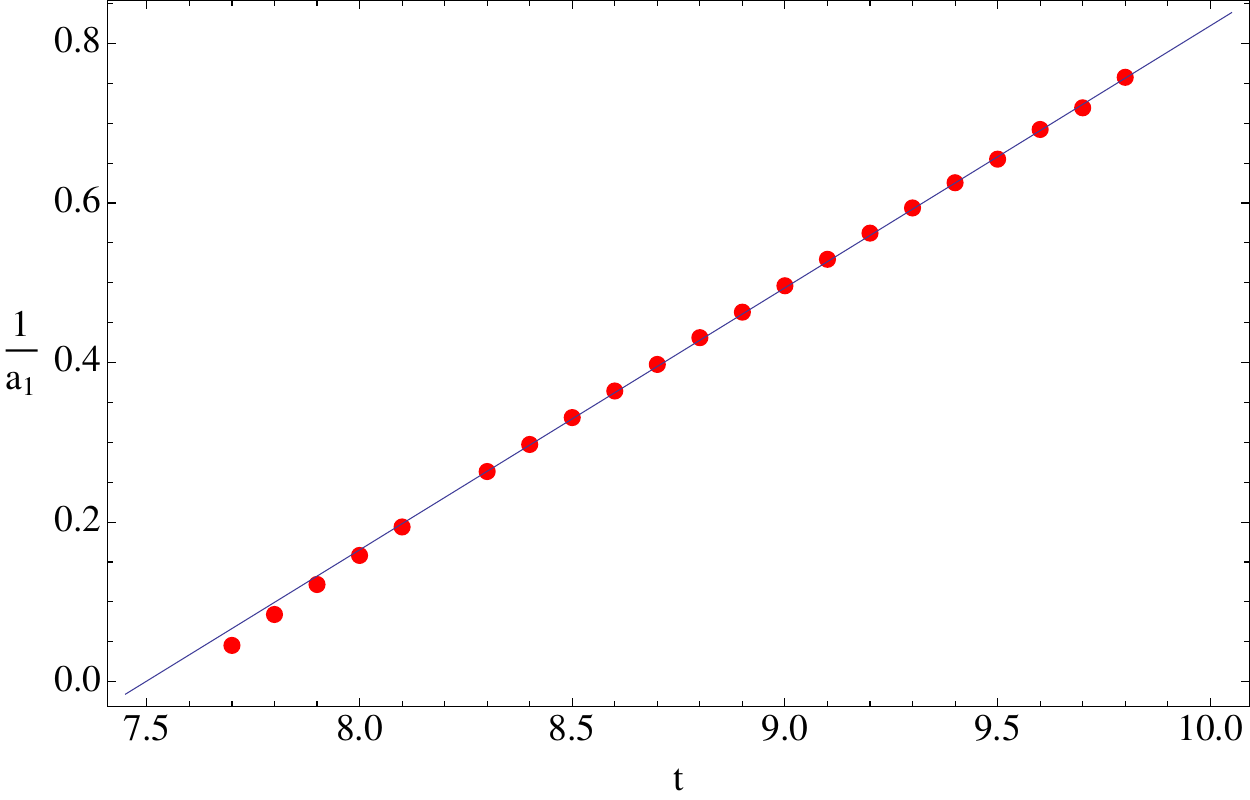}
\caption{Quintic: Numerical calculation of $a_0(Q)$ and $a_1(Q)$ associated to the conifold instanton action. We show the inverse of $a_1$ because the dependence seems linear, although we are not able to confirm this analytically. The plot for $a_0$ does not seem to lead to any linear dependence.}
\label{fig:comp_geom_5_con_1overa0a1}
\end{figure}
%%%%%%%%%%%%%%%%%%%%%%%%%%%%%%%%%%%%%%%%%%%%%%%%%%%%%%%%%%%%%%%%%

%%%%%%%%%%%%%%%%%%%%%%%%%%%%%%%%%%%%%%%%%%%%%%%%%%%%%%%%%%%%%%%%%
\acknowledgments
We would like to thank Aleksey Zinger for discussions, and Marcos Mari\~no, Jos\'e Mour\~ao and Jo\~ao Pimentel Nunes for useful comments on the draft. RS would further like to thank the University of Geneva for extended hospitality, where a part of this work was conducted. RV would further like to thank the DESY Theory Group for support and hospitality. This research was partially supported by the FCT-Portugal grants EXCL/MAT-GEO/0222/2012, UID/MAT/04459/2013 and PTDC/MAT-GEO/3319/2014. The research of RS was partially supported by the Swiss-NSF grant NCCR 51NF40-141869 ``The Mathematics of Physics'' (SwissMAP).
%%%%%%%%%%%%%%%%%%%%%%%%%%%%%%%%%%%%%%%%%%%%%%%%%%%%%%%%%%%%%%%%%

\newpage

%%%%%%%%%%%%%%%%%%%%%%%%%%%%%%%%%%%%%%%%%%%%%%%%%%%%%%%%%%%%%%%%%
%%%%%%%%%%%%%%%%%%%%%%%%%%%%%%%%%%%%%%%%%%%%%%%%%%%%%%%%%%%%%%%%%
\appendix
%%%%%%%%%%%%%%%%%%%%%%%%%%%%%%%%%%%%%%%%%%%%%%%%%%%%%%%%%%%%%%%%%
%%%%%%%%%%%%%%%%%%%%%%%%%%%%%%%%%%%%%%%%%%%%%%%%%%%%%%%%%%%%%%%%%

%%%%%%%%%%%%%%%%%%%%%%%%%%%%%%%%%%%%%%%%%%%%%%%%%%%%%%%%%%%%%%%%%
%%%%%%%%%%%%%%%%%%%%%%%%%%%%%%%%%%%%%%%%%%%%%%%%%%%%%%%%%%%%%%%%%
\section{Analysis of the $abc$-Coefficients}\label{sec:abc_coefficients}
%%%%%%%%%%%%%%%%%%%%%%%%%%%%%%%%%%%%%%%%%%%%%%%%%%%%%%%%%%%%%%%%%
%%%%%%%%%%%%%%%%%%%%%%%%%%%%%%%%%%%%%%%%%%%%%%%%%%%%%%%%%%%%%%%%%

When the GW invariants, $N_{g,d}$, may be written in terms of GV invariants, $n^{(d)}_g$, then there is a third representation in terms of some other integers we denoted by $a_d$, $b_{d,n}$, $c_d$. They appear naturally when considering the large-genus expansion of $N_{g,d}$. 

\begin{proposition}
The relation between GW and GV invariants, and $abc$-coefficients is
\begin{align}
\label{eq:app:GW_abc}
N_{g,d} &= f^\tconi_g \left\{ \sum_{m|d} a_m \left( \frac{d}{m} \right)^{2g-3} + \frac{2g}{B_{2g}}\, \frac{1}{d} \left( c_d\, \delta_{g,1} + \sum_{m=1}^{G(d)-1} b_{d,m}\, m^{2g-2} \right) \right\}, \\
a_d &= n^{(d)}_0, \\ 
b_{d,m} &= \sum_{k|d,m} (-1)^{\frac{m}{k}}\, \frac{2d}{k} \sum_{h=\frac{m}{k}+1}^{G(\frac{d}{k})} n^{(\frac{d}{k})}_h \binom{2h-2}{h-1+\frac{m}{k}}, \\
c_d &= \sum_{m|d} m \left\{ n_1^{(m)} + 2 \sum_{h=2}^{G(m)} \frac{n_{h}^{(m)}}{h}\, \binom{2h-3}{h-2} \right\}.
\end{align}
\noindent
Here $f^\tconi_0 = 1$, $f^\tconi_g = (-1)^{g-1}\, \frac{B_{2g}}{2g\left(2g-2\right)!}$ for $g\geq 1$, and $B_{2g}$ are the Bernoulli numbers. $G(d)$ satisfies $n^{(d)}_g = 0$ for $g > G(d)$. Since the GV invariants are integers, so are the $abc$-coefficients.
\end{proposition}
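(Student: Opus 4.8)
The plan is to obtain \eqref{eq:app:GW_abc} and the three closed forms directly from the Gopakumar--Vafa expansion \eqref{eq:GVexpansion} by a double extraction of coefficients, followed by a regrouping of the resulting divisor/binomial sums. First I would read off $F^{(0)}_g$ from \eqref{eq:GVexpansion} as the coefficient of $g_{\text s}^{2g-2}$: using $\big(2\sin\tfrac{mg_{\text s}}{2}\big)^{2r-2}=\sum_h c_{r,h}(mg_{\text s})^{2h-2}$ this gives $F^{(0)}_g=\sum_{r\ge0}c_{r,g}\sum_\beta n^{(\beta)}_r\sum_m m^{2g-3}Q^{\beta m}$, and then, with $b_2=1$ and reading the coefficient of $Q^d$ (so $\beta\mid d$, $m=d/\beta$), one recovers \eqref{eq:GWcGV}, with the $r$-sum truncated at $G(d)$ since $n^{(\beta)}_r=0$ for $r>G(\beta)\le G(d)$. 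Everything then reduces to (i) an explicit formula for the universal numbers $c_{r,g}$ and (ii) careful bookkeeping.

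For (i) I would write $2\sin\tfrac x2=-\rmi\big(\rme^{\rmi x/2}-\rme^{-\rmi x/2}\big)$, expand the $(2r-2)$-th power by the binomial theorem, and collect powers of $x$ via $\rme^{\rmi\ell x}=\sum_p(\rmi\ell x)^p/p!$. The result is $c_{r,h}=\tfrac{(-1)^{h-1}}{(2h-2)!}\sum_{\ell=-(r-1)}^{r-1}(-1)^\ell\binom{2r-2}{r-1-\ell}\ell^{2h-2}$. Three facts follow: $c_{1,g}=\delta_{g,1}$; $c_{0,g}=f^\tCS_g$ for every $g$ (for $g\ge2$ this is \emph{forced} by the resolved-conifold evaluation \eqref{eq:GWconi}, whose single GV datum is $n^{(1)}_0=1$, and at $g=1$ it is the check $c_{0,1}=\tfrac1{12}=f^\tCS_1$); and, for $r\ge2$ and $g\ge2$, the $\ell=0$ term vanishes and parity in $\ell$ leaves $c_{r,g}=\tfrac{2(-1)^{g-1}}{(2g-2)!}\sum_{\ell=1}^{r-1}(-1)^\ell\binom{2r-2}{r-1-\ell}\ell^{2g-2}$.

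For (ii), at $g\ge2$: the $r=0$ term supplies $f^\tCS_g\sum_{m\mid d}a_m(d/m)^{2g-3}$ with $a_m:=n^{(m)}_0$; the $r=1$ term vanishes; and in each $r\ge2$ term I would use $\ell^{2g-2}(d/\beta)^{2g-3}=\tfrac{\beta}{d}\,(\ell d/\beta)^{2g-2}$ to recombine the triple sum over $(r,\ell,\beta\mid d)$ according to the integer $m:=\ell d/\beta$, then reparametrise by $k:=d/\beta$. Using $\binom{2h-2}{h-1-m/k}=\binom{2h-2}{h-1+m/k}$ and the identity $\tfrac{2(-1)^{g-1}}{(2g-2)!}=f^\tCS_g\tfrac{4g}{B_{2g}}$, this reproduces \eqref{eq:app:GW_abc} with exactly the stated $b_{d,m}$; the $m$-sum truncates at $G(d)-1$ because the defining sum of $b_{d,m}$ is empty for $m\ge G(d)$. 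The genus-one case is treated on its own: only $r=0,1$ contribute, so $N_{1,d}=\tfrac1{12d}\sum_{\beta\mid d}\beta\,n^{(\beta)}_0+\tfrac1d\sum_{\beta\mid d}\beta\,n^{(\beta)}_1$, and matching against \eqref{eq:app:GW_abc} at $g=1$ (where $f^\tCS_1\tfrac2{B_2}=12$ and $m^{2g-2}\to1$) forces $c_d=\sum_{\beta\mid d}\beta\,n^{(\beta)}_1-\sum_m b_{d,m}$; evaluating $\sum_m b_{d,m}$ by interchanging the $\ell$- and $h$-summations and invoking the alternating-binomial cancellation $\sum_{\ell=1}^{h-1}(-1)^\ell\binom{2h-2}{h-1+\ell}=-\tfrac12\binom{2h-2}{h-1}$, together with a Catalan-type identity relating $\binom{2h-2}{h-1}$ and $\binom{2h-3}{h-2}$, collapses this into the claimed form of $c_d$. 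Integrality is then read off the closed forms: $a_d=n^{(d)}_0\in\BZ$; in $b_{d,m}$ the prefactor $\tfrac{2d}{k}=2(d/k)$ is an integer since $k\mid d$, and it multiplies an integer combination of the $n^{(d/k)}_h\in\BZ$; the same holds for $c_d$.

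I expect the main obstacle to be the bookkeeping in (ii): keeping the nested divisor and binomial sums straight through the reindexing $\beta\leftrightarrow k$ and the interchanges of summation order, and pinning down exactly the combinatorial identities needed at $g=1$ so that the coefficient of each $n^{(m)}_h$ in $c_d$ comes out correctly. A secondary subtlety is justifying that the $m$-range is precisely $1\le m\le G(d)-1$ and that $G(\beta)\le G(d)$ whenever $\beta\mid d$, both of which rest on monotonicity of the polynomial $G$.
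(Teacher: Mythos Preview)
Your plan is essentially the paper's own argument. Both start from \eqref{eq:app:GW_chg_GV}, write down the explicit closed form \eqref{eq:app:chg_explicit} for the universal coefficients $c_{h,g}$, split the $h$-sum into $h=0$, $h=1$, $h\ge2$, and then reindex the $h\ge2$ piece via $k=d/\beta$, $m=k\ell$ (your $m:=\ell d/\beta$) to produce $b_{d,m}$. The paper leaves the genus-one computation and the derivation of $c_d$ entirely implicit (``steps are straightforward''), whereas you spell them out; that extra detail is welcome and does not constitute a different route.

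One point to flag in your $g=1$ bookkeeping: with your alternating-binomial identity $\sum_{\ell=1}^{h-1}(-1)^\ell\binom{2h-2}{h-1+\ell}=-\tfrac12\binom{2h-2}{h-1}$ (which is correct) and the Pascal/Catalan relation $\binom{2h-2}{h-1}=2\binom{2h-3}{h-2}$, your $c_d$ comes out as $\sum_{m|d} m\big\{n_1^{(m)}+2\sum_{h\ge2} n_h^{(m)}\binom{2h-3}{h-2}\big\}$, i.e.\ without the $1/h$ that appears in the stated formula. No standard ``Catalan-type identity'' produces that extra $1/h$; the natural Catalan combination is $\tfrac{1}{h}\binom{2h-2}{h-1}=\tfrac{2}{h}\binom{2h-3}{h-2}$, but that is not what your computation yields. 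So either the stated $c_d$ carries a spurious $1/h$, or there is a factor of $h$ hiding in your reindexing---worth rechecking carefully rather than deferring to an unspecified identity. Either way the method is the paper's, and integrality of $c_d$ is unaffected since both $\binom{2h-2}{h-1}$ and $\tfrac{2}{h}\binom{2h-3}{h-2}$ are integers.
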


\begin{proof}
We start from the definition of GV invariants,
\begin{equation}
N_{g,d} = \sum_{h=0}^g c_{h,g} \sum_{m|d} n^{(m)}_h \left( \frac{d}{m} \right)^{2g-3} = \sum_{m|d} \sum_{h=0}^{G(m)} c_{h,g}\, n^{(m)}_h \left( \frac{d}{m} \right)^{2g-3},
\label{eq:app:GW_chg_GV}
\end{equation}
\noindent
where in the second equality we have noticed that $n^{(m)}_h = 0$ for $h>G(m)$. The coefficients $c_{h,g}$ generate $\left( 2 \sin \frac{x}{2} \right)^{2h-2} = \sum_{h=g}^{+\infty} c_{h,g}\, x^{2g-2}$, and they are explicitly given by
\begin{equation}
c_{0,g} = f^\tconi_g, \qquad c_{1,g}=\delta_{g,1}, \qquad c_{h,g} = (-1)^{g-1}\, \frac{2}{\left(2g-2\right)!} \sum_{k=1}^{h-1} \binom{2h-2}{h-1+k} (-1)^k\, k^{2g-2}.
\label{eq:app:chg_explicit}
\end{equation}
\noindent
Next, we organize the terms in \eqref{eq:app:GW_chg_GV} from more to less important as $g$ grows, using \eqref{eq:app:chg_explicit}. In order to do this, we split the $h$-sum in \eqref{eq:app:GW_chg_GV} into $h=0$, $h=1$, and $h\geq 2$. For $h\geq 2$ we can assume that also $g\geq 2$ and manipulate to arrive at \eqref{eq:app:GW_abc}. The steps are straightforward once one knows the goal, and they simply require the exchange of double sums, such as, \textit{e.g.}, $\sum_{h=2}^{G(m)} \sum_{k=1}^{h-1} = \sum_{k=1}^{G(m)-1} \sum_{h=k+1}^{G(m)}$; or relabelings, such as, \textit{e.g.}, $\sum_{m|d} f(m) = \sum_{m|d} f(d/m)$. The end result, after four of these manipulations, is
\begin{align}
N_{g,d} &= f^\tconi_g \left\{ \sum_{m|d} n^{(m)}_0 \left( \frac{d}{m} \right)^{2g-3} + \frac{2g}{B_{2g}}\, \frac{1}{d} \left(\delta_{g,1} \sum_{m|d} n^{(m)}_1\, m + \right. \right. \nonumber \\
& \qquad \qquad \qquad
\left. \left. + \delta_{g\geq 2} \sum_{m=1}^{G(d)-1} \sum_{k|d,m} \sum_{h=\frac{m}{k}+1}^{G(\frac{d}{k})} n^{(\frac{d}{k})}_h\, \frac{2d}{k}\, \binom{2h-2}{h-1+\frac{m}{k}}\, (-1)^{\frac{m}{k}}\, m^{2g-2} \right)  \right\},
\end{align}
\noindent
from where one immediately can read the $abc$-coefficients. 
\end{proof}

We finish this appendix with a short note on how the GV and GW invariants, and the $abc$-coefficients, may be laid out in Dirichlet series for each genus $g$, in contrast to the usual Taylor-series expansion in the form of free energies. If we define the generating functions
\begin{equation}
\GV_g (s) := \sum_{d=1}^{+\infty} \frac{n^{(d)}_g}{d^s}, \qquad \GW_g (s) := \sum_{d=1}^{+\infty} \frac{N_{g,d}}{d^s}, \qquad \TGW_g (s) := \frac{\GW_g(s)}{\zeta \left(s-(2g-3)\right)},
\end{equation}
\noindent
then it follows from \eqref{eq:app:GW_chg_GV} the linear transformations
\begin{equation}
\TGW_g (s) = \sum_{h=0}^g c_{h,g}\, \GV_h(s) \qquad \text{and} \qquad \GV_g (s) = \sum_{h=0}^g \alpha_{g,h}\, \TGW_h(s).
\end{equation}
\noindent
Here, the $\alpha$-coefficients arise from the generating function defined in \cite{bp00},
\be
\left( \frac{\arcsin(\sqrt{r}/2)}{\sqrt{r}/2} \right)^{2g-2} =: \sum_{h=0}^{+\infty} \alpha_{g+h,g}\, r^h.
\ee

We can also define Dirichlet series for the $abc$-coefficients, as
\begin{equation}
\MA(s) := \sum_{d=1}^{+\infty} \frac{a_d}{d^s}, \qquad \HB_{2g-2}(s) := \frac{(-1)^{g-1}}{\left(2g-2\right)!}\, \frac{1}{\zeta \left(s-(2g-3)\right)} \sum_{d=1}^{+\infty} \frac{\mb_{2g-2}(d)}{d^s},
\end{equation}
\noindent
where
\be
\mb_{2g-2} (d) := \frac{1}{d} \left( c_d\, \delta_{g,1} + \sum_{n=1}^{G(d)-1} b_{d,n}\, n^{2g-2} \right).
\ee
\noindent
They are linearly related to the previous Dirichlet series as
\begin{equation}
\TGW_g (s) = f^\tconi_g \MA(s) + \HB_{2g-2}(s), \qquad \GV_0 (s) = \MA(s), \qquad \GV_g (s) = \sum_{h=1}^g \alpha_{g,h}\, \HB_{2h-2}(s).
\end{equation}
\noindent
The last expression can be inverted to define $\HB_{2g-2} (s)= \sum_{h=1}^g c_{h,g}\, \GV_h(s)$.

\newpage
%\newgeometry{left=2cm,right=2cm,bottom=2cm,top=2cm}

\begin{landscape}

%%%%%%%%%%%%%%%%%%%%%%%%%%%%%%%%%%%%%%%%%%%%%%%%%%%%%%%%%%%%%%%%%
%%%%%%%%%%%%%%%%%%%%%%%%%%%%%%%%%%%%%%%%%%%%%%%%%%%%%%%%%%%%%%%%%
\section{Large-Order Enumerative Data}\label{sec:appendix}
%%%%%%%%%%%%%%%%%%%%%%%%%%%%%%%%%%%%%%%%%%%%%%%%%%%%%%%%%%%%%%%%%
%%%%%%%%%%%%%%%%%%%%%%%%%%%%%%%%%%%%%%%%%%%%%%%%%%%%%%%%%%%%%%%%%

\vspace{-0.05cm}

%%%%%%%%%%%%%%%%%%%%%%%%%%%%%%%%%%%%%%%%%%%%%%%%%%%%%%%%%%%%%%%%%
\subsection{Local $\BP^2$}\label{sec:appendix:localP2}
%%%%%%%%%%%%%%%%%%%%%%%%%%%%%%%%%%%%%%%%%%%%%%%%%%%%%%%%%%%%%%%%%

\vspace{-0.05cm}

%%%%%%%%%%%%%%%%%%%%%%%%%%%%%%%%%%%%%%%%%%%%%%%%%%%%%%%%%%%%%%%%%
\subsubsection*{GW Invariants}
%%%%%%%%%%%%%%%%%%%%%%%%%%%%%%%%%%%%%%%%%%%%%%%%%%%%%%%%%%%%%%%%%

\hspace{-2cm}
\begin{minipage}{\textwidth}
\centering
\includegraphics[]{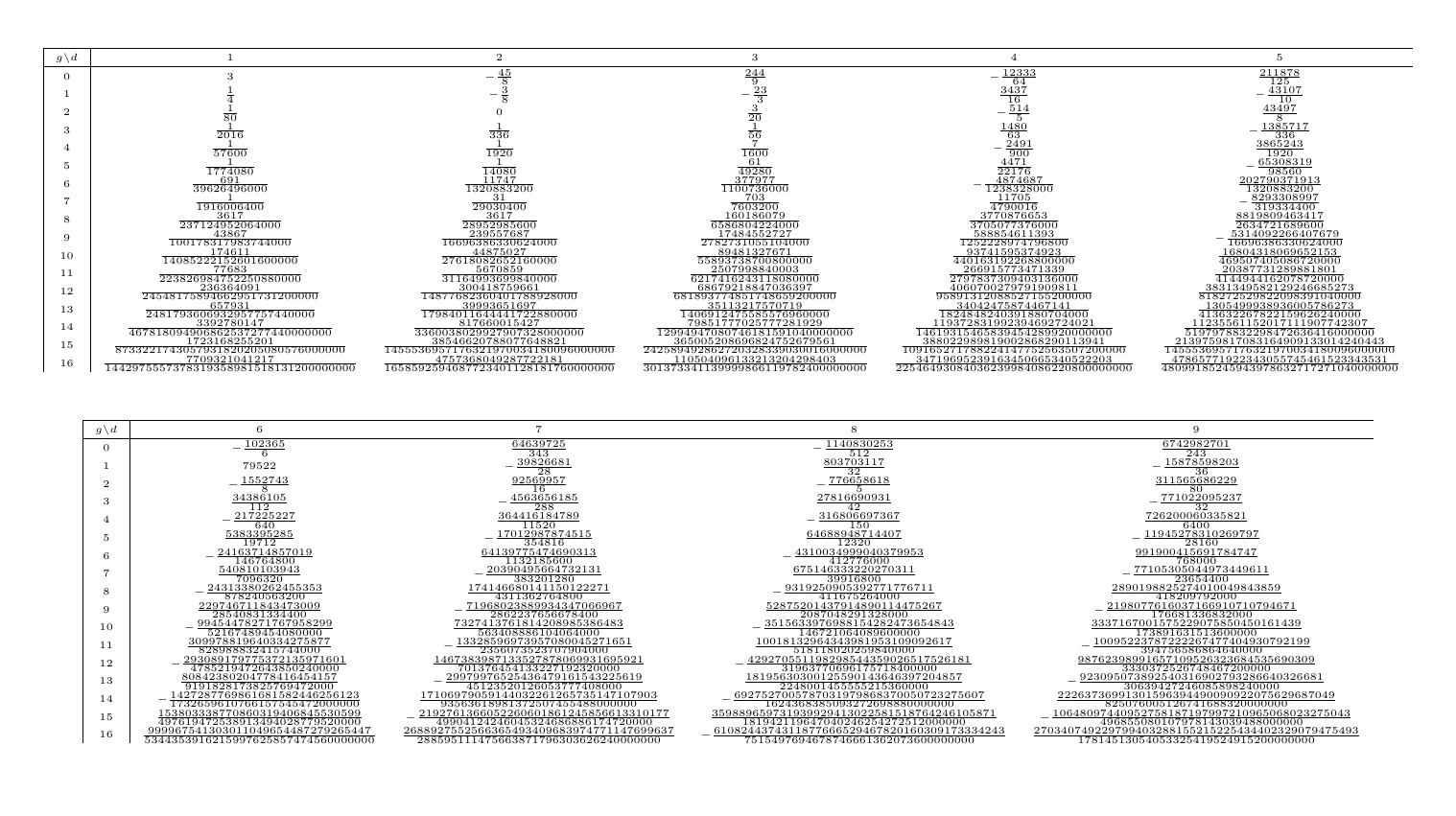}
\end{minipage}

%%%%%%%%%%%%%%%%%%%%%%%%%%%%%%%%%%%%%%%%%%%%%%%%%%%%%%%%%%%%%%%%%
\subsubsection*{$abc$-Coefficients}
%%%%%%%%%%%%%%%%%%%%%%%%%%%%%%%%%%%%%%%%%%%%%%%%%%%%%%%%%%%%%%%%%

\begin{minipage}{\textwidth}
\hspace{-2cm}
\begin{tabular}{cc}
\raisebox{5cm}{\includegraphics[]{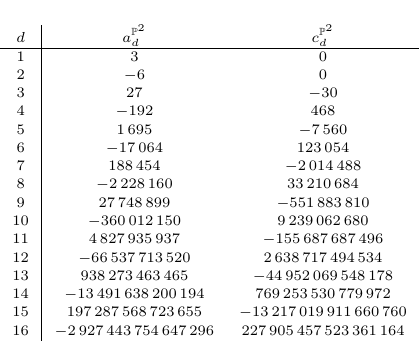}} \quad \quad
&
\includegraphics[]{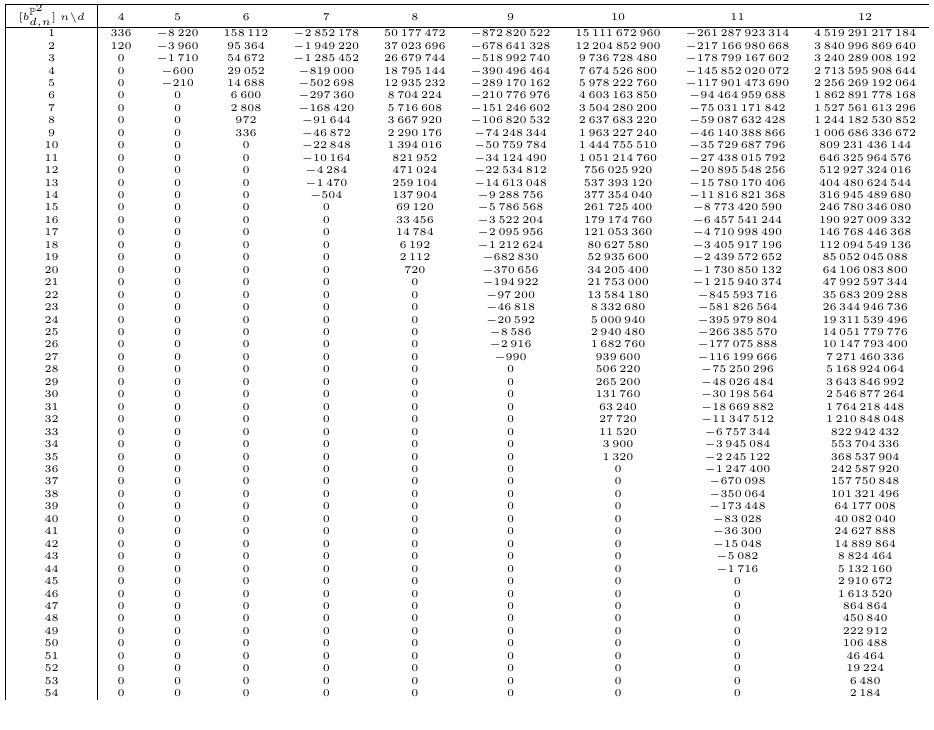}      
\end{tabular}
\end{minipage}

%%%%%%%%%%%%%%%%%%%%%%%%%%%%%%%%%%%%%%%%%%%%%%%%%%%%%%%%%%%%%%%%%
\subsection{Local $\BP^1\times\BP^1$}\label{sec:appendix:localP1xP1}
%%%%%%%%%%%%%%%%%%%%%%%%%%%%%%%%%%%%%%%%%%%%%%%%%%%%%%%%%%%%%%%%%

%%%%%%%%%%%%%%%%%%%%%%%%%%%%%%%%%%%%%%%%%%%%%%%%%%%%%%%%%%%%%%%%%
\subsubsection*{GW Invariants}
%%%%%%%%%%%%%%%%%%%%%%%%%%%%%%%%%%%%%%%%%%%%%%%%%%%%%%%%%%%%%%%%%

\hspace{-2cm}
\begin{minipage}{\textwidth}
\centering
\includegraphics[]{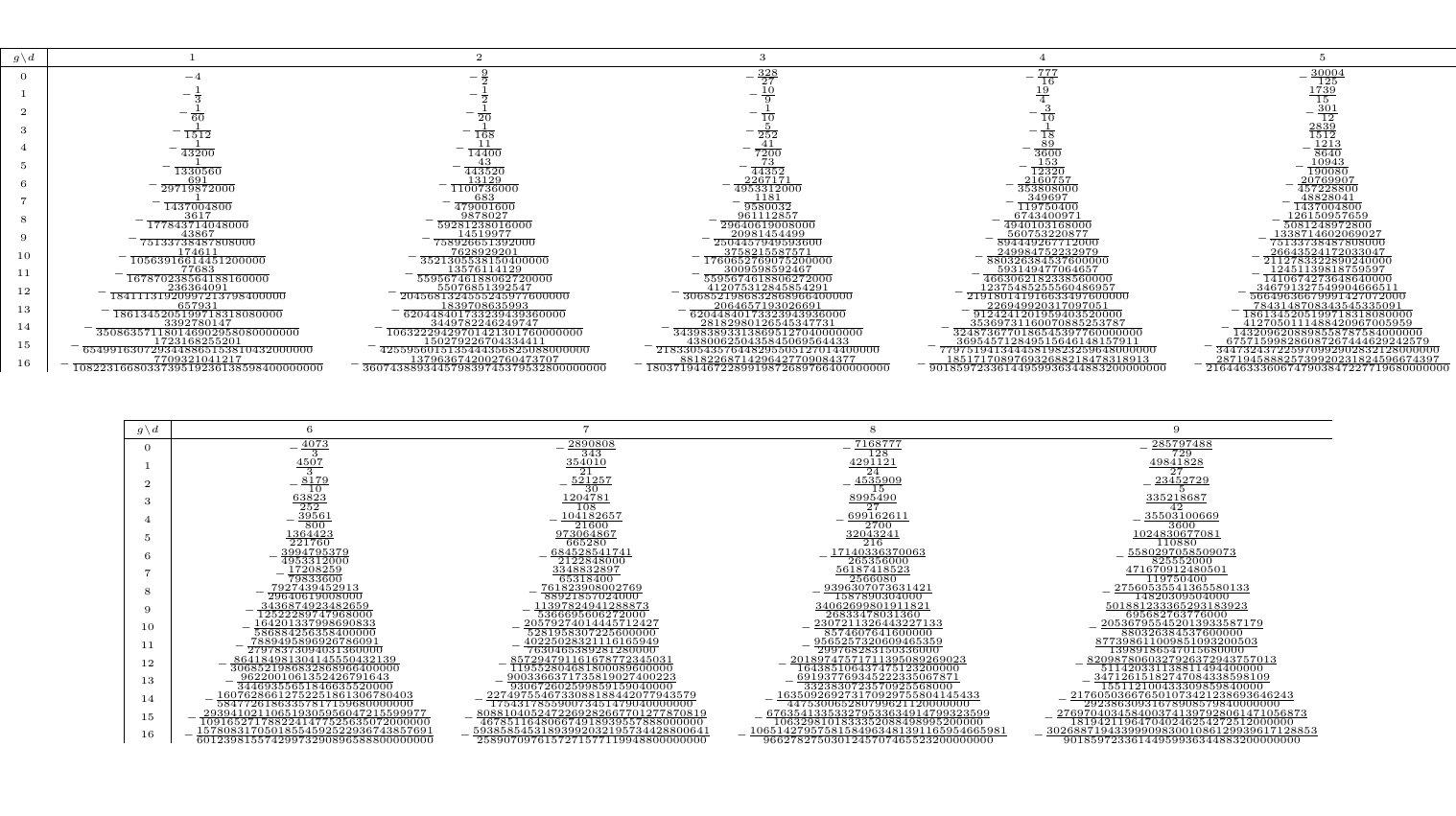}
\end{minipage}

%%%%%%%%%%%%%%%%%%%%%%%%%%%%%%%%%%%%%%%%%%%%%%%%%%%%%%%%%%%%%%%%%
\subsubsection*{$abc$-Coefficients}
%%%%%%%%%%%%%%%%%%%%%%%%%%%%%%%%%%%%%%%%%%%%%%%%%%%%%%%%%%%%%%%%%

\begin{minipage}{\textwidth}
\hspace{-2cm}
\begin{tabular}{cc}
\raisebox{2cm}{\includegraphics[]{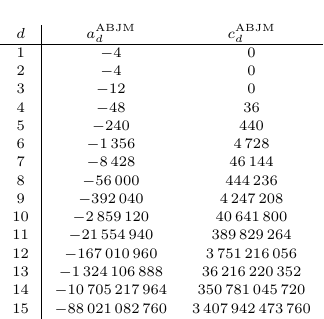}} \quad \quad
&
\includegraphics[]{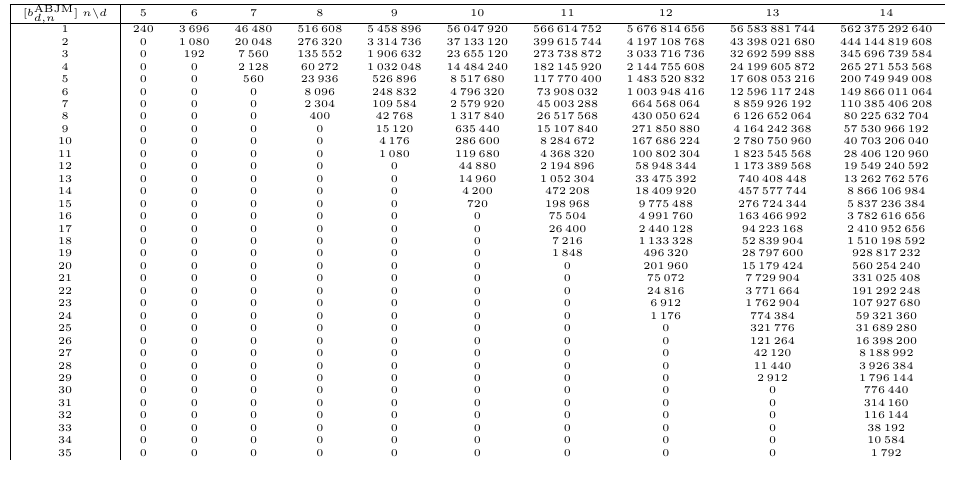}
\end{tabular}
\end{minipage}

%%%%%%%%%%%%%%%%%%%%%%%%%%%%%%%%%%%%%%%%%%%%%%%%%%%%%%%%%%%%%%%%%
\subsection{Local Curve $X_p$}\label{sec:appendix_local}
%%%%%%%%%%%%%%%%%%%%%%%%%%%%%%%%%%%%%%%%%%%%%%%%%%%%%%%%%%%%%%%%%

%%%%%%%%%%%%%%%%%%%%%%%%%%%%%%%%%%%%%%%%%%%%%%%%%%%%%%%%%%%%%%%%%
\subsubsection*{GW Invariants ($p=3$)}
%%%%%%%%%%%%%%%%%%%%%%%%%%%%%%%%%%%%%%%%%%%%%%%%%%%%%%%%%%%%%%%%%

\hspace{-2cm}
\begin{minipage}{\textwidth}
\centering
\includegraphics[]{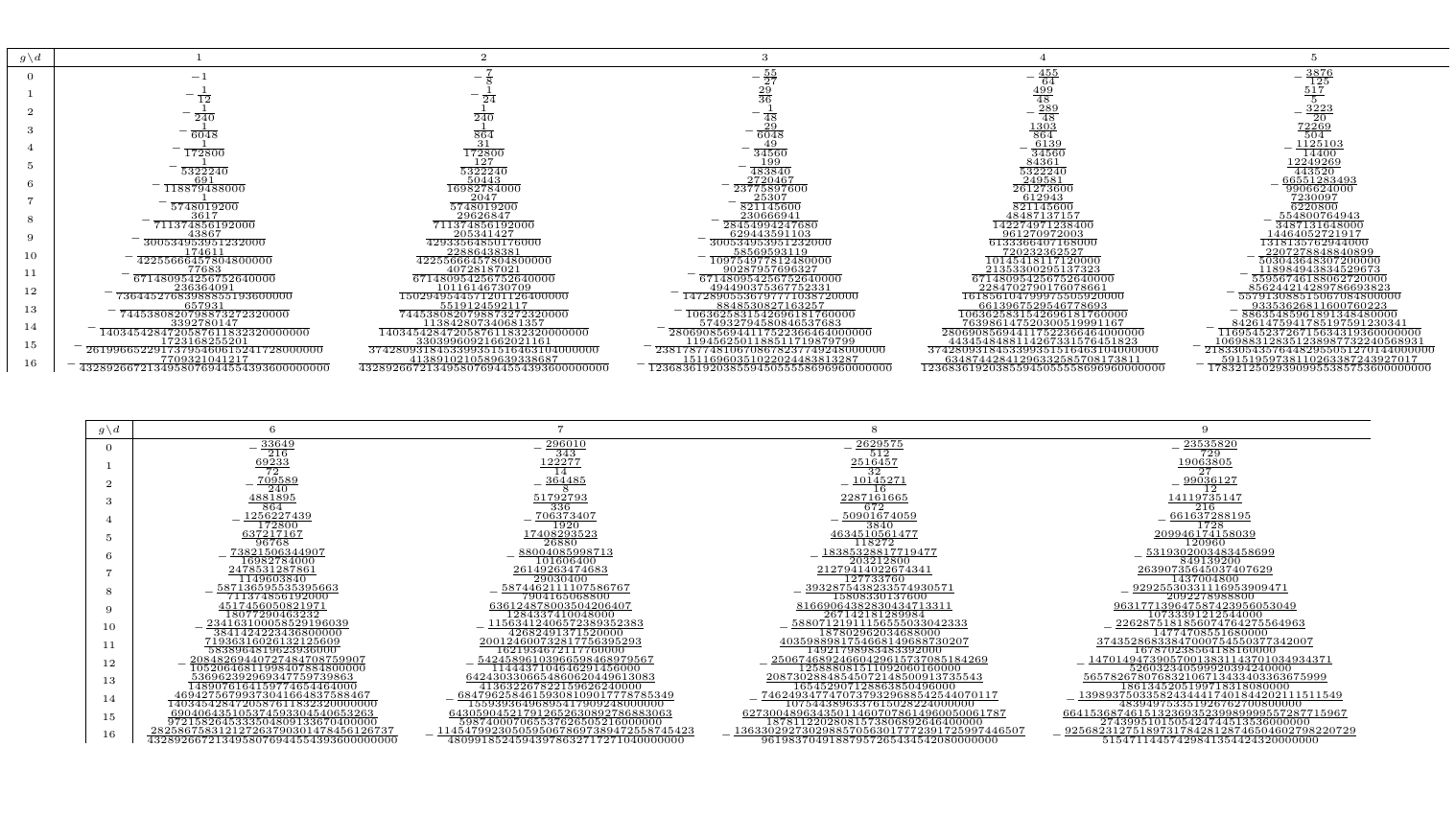}
\end{minipage}

%%%%%%%%%%%%%%%%%%%%%%%%%%%%%%%%%%%%%%%%%%%%%%%%%%%%%%%%%%%%%%%%%
\subsubsection*{GW Invariants ($p=4$)}    
%%%%%%%%%%%%%%%%%%%%%%%%%%%%%%%%%%%%%%%%%%%%%%%%%%%%%%%%%%%%%%%%%

\hspace{-2cm}
\begin{minipage}{\textwidth}
\centering
\includegraphics[]{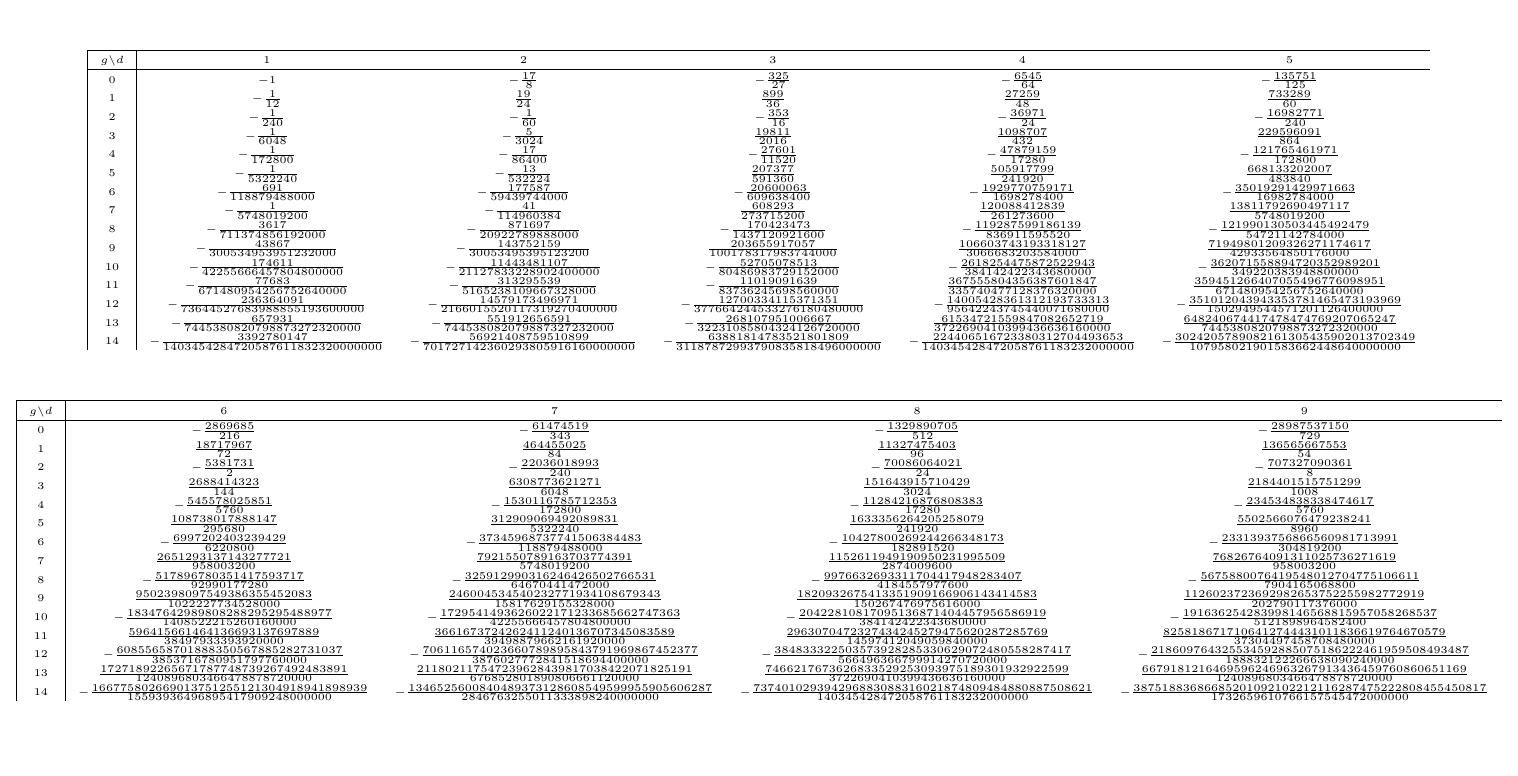}
\end{minipage}
    
%%%%%%%%%%%%%%%%%%%%%%%%%%%%%%%%%%%%%%%%%%%%%%%%%%%%%%%%%%%%%%%%%
\subsubsection*{GW Invariants ($p=5$)}
%%%%%%%%%%%%%%%%%%%%%%%%%%%%%%%%%%%%%%%%%%%%%%%%%%%%%%%%%%%%%%%%%

\hspace{-2.cm}
\begin{minipage}{\textwidth}
\centering
\includegraphics[]{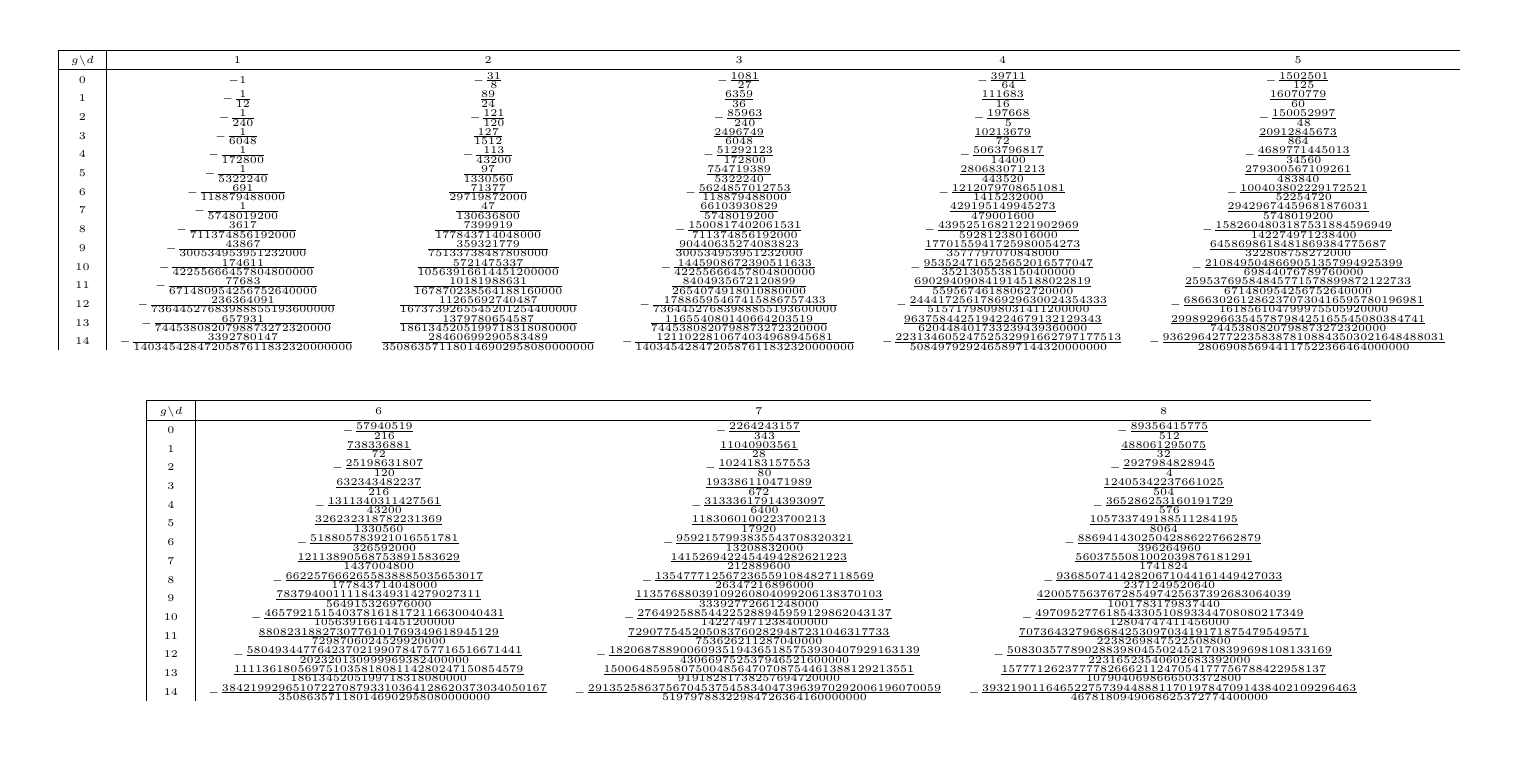}
\end{minipage}

%%%%%%%%%%%%%%%%%%%%%%%%%%%%%%%%%%%%%%%%%%%%%%%%%%%%%%%%%%%%%%%%%
\subsubsection*{$abc$-Coefficients ($p=3,4,5$)}
%%%%%%%%%%%%%%%%%%%%%%%%%%%%%%%%%%%%%%%%%%%%%%%%%%%%%%%%%%%%%%%%%

\begin{adjustwidth*}{-0cm}{-0.8cm}   
\begin{minipage}{\textwidth}
\hspace{-2cm}
\begin{tabular}{ccc}
\begin{tabular}{c}
\raisebox{0cm}{\includegraphics[]{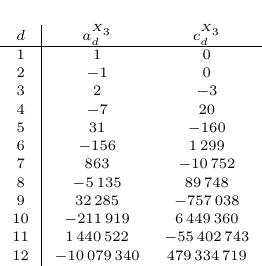}} \\
\raisebox{0cm}{\includegraphics[]{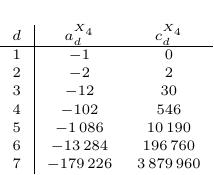}} \\   
\raisebox{12cm}{\includegraphics[]{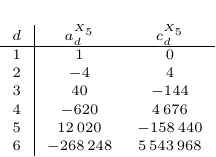}}
\end{tabular} 
&
\includegraphics[]{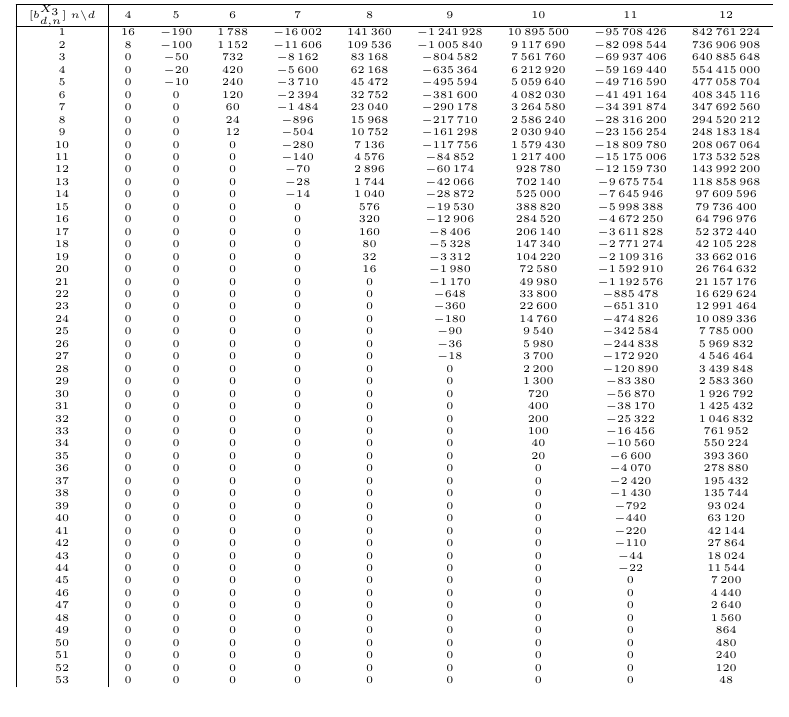}
\begin{tabular}{c}
\raisebox{0.2cm}{
\includegraphics[]{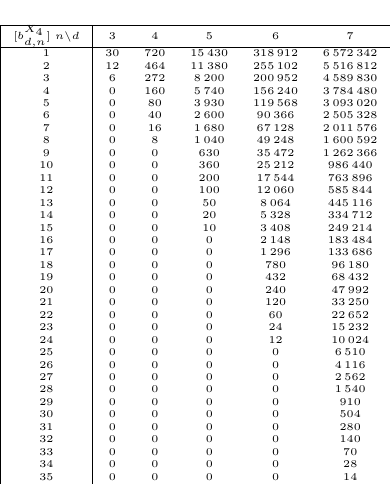}} \\
\raisebox{8cm}{\includegraphics[]{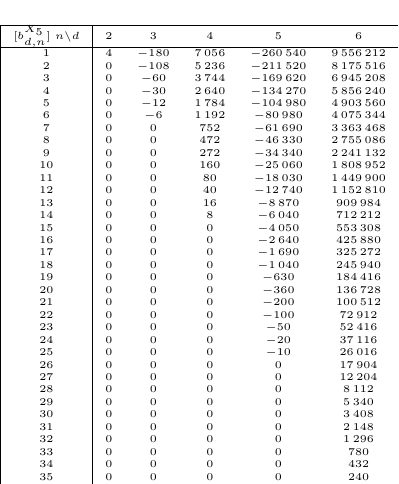}}
\end{tabular}
\end{tabular}
\end{minipage}
\end{adjustwidth*}

\end{landscape}

%%%%%%%%%%%%%%%%%%%%%%%%%%%%%%%%%%%%%%%%%%%%%%%%%%%%%%%%%%%%%%%%%
\subsubsection*{Large-Degree Expansion Coefficients $\widehat{c}_{j_0}^{(j)}$}
%%%%%%%%%%%%%%%%%%%%%%%%%%%%%%%%%%%%%%%%%%%%%%%%%%%%%%%%%%%%%%%%%

\begin{table}[ht]
\centering
\renewcommand{\arraystretch}{1.25}
\begin{tabular}{c|cccc}
\begin{picture}(20,20)(0,0)
\put(15,12){$j_0$}
\put(3.6,18){\line(1,-1){22.3}}
\put(5,0){$j$}
\end{picture}
& 1 & 2 & 3 & 4\\
\hline
1  & $\frac{\sqrt{2}}{3}(f-2)$ & & & \\
2  & $\frac{f^2 + 5f + 5}{18}$ & $(f-2)^2$ & & \\
3  & $-\frac{\sqrt{2} (f-2) \left(f^2+23 f-23\right)}{810}$ & $\frac{\sqrt{2} \left(f^2+5 f-5\right) (f-2)}{54}$  & $\frac{\sqrt{2} (f-2)^3}{81}$ & \\
4  & $\frac{-7 f^4+2 f^3-33 f^2+62 f-31}{1620}$ & $\frac{7 f^4-2 f^3+1113 f^2-2222 f+1111}{9720}$ & $\frac{(f-2)^2 \left(f^2+5 f-5\right)}{162}$ & $\frac{(f-2)^4}{486}$\\
\end{tabular}
%\caption{Coefficients $\widehat{c}_{j_0}^{(j)}$ of the large-degree expansion of GW invariants for the local curve.}
\label{tableXp_larged}
\end{table}

%%%%%%%%%%%%%%%%%%%%%%%%%%%%%%%%%%%%%%%%%%%%%%%%%%%%%%%%%%%%%%%%%
\subsubsection*{Free Energy Coefficients $a_{g,i}(p)$}
%%%%%%%%%%%%%%%%%%%%%%%%%%%%%%%%%%%%%%%%%%%%%%%%%%%%%%%%%%%%%%%%%

\begin{center}
\renewcommand{\arraystretch}{1.75}
\fontsize{13pt}{10pt}
\begin{tabular}{cc}
\begin{tabular}{c|c}
i & $a_{2,i}(p)$\\
\hline
1  & $-\frac{1}{240 f^5}$ \\
2  & $-\frac{-2 f^3+25 f^2+f+12}{2880 f^5}$ \\
3  & $\frac{-12 f^3+9 f^2-35 f+2}{2880 f^4}$ \\
4  & $-\frac{7 f+5}{2880 f^2}$ \\
5  & $\frac{f-1}{2880 f}$ 
\end{tabular} \,\,\,\,\,\,
&
\begin{tabular}{c|c}
i & $a_{3,i}(p)$\\
\hline
1  & $\frac{1}{6048 f^{10}}$ \\
2  & $\frac{3 f^4-70 f^3+497 f^2-630 f+280}{241920 f^{10}}$ \\
3  & $\frac{-137 f^5+1278 f^4-3045 f^3+3187 f^2-1883 f+360}{181440 f^{10}}$ \\
4  & $\frac{1741 f^6-8517 f^5+17136 f^4-21039 f^3+13013 f^2-4734 f+360}{362880 f^{10}}$ \\
5  & $-\frac{636 f^6-3031 f^5+7693 f^4-9638 f^3+7735 f^2-3031 f+636}{120960 f^9}$ \\ 
6  & $\frac{360 f^6-4734 f^5+13118 f^4-21039 f^3+17031 f^2-8517 f+1741}{362880 f^8}$ \\
7  & $\frac{360 f^5-1853 f^4+3187 f^3-3075 f^2+1278 f-137}{181440 f^7}$ \\
8  & $\frac{295 f^4-630 f^3+482 f^2-70 f+3}{241920 f^6}$ \\
9  & $\frac{f^2+12 f-1}{72576 f^3}$ \\
10  & $\frac{f^2-1}{725760 f^2}$
\end{tabular}
\end{tabular}
\end{center}
\noindent
If we define the following quantity (recall that $f \equiv \left( p-1 \right)^2$)
\be 
\bar{a}_{g,i} (f) = C_g\, f^{6(g-1)}\, a_{g,i}(f) + \binom{5(g-1)}{i} \left( f^i - f^{i+g-1} \right),
\ee
\noindent
we find that it has the following ``reflection'' property
\be 
\bar{a}_{g,i} (f) = f^{6(g-1)}\, \bar{a}_{g,5(g-1)-i} \left(\frac{1}{f}\right). \label{reflection}
\ee
\noindent
This would, in general, reduce the number of GW invariants needed to completely fix \eqref{FgBmodel}, from $5(g-1)$ down to $\lceil \frac{5(g-1)}{2} \rceil$. The available amount of data is unfortunately not enough to completely pinpoint a general expression for the constant $C_g$.

\begin{landscape}\fontsize{13pt}{10pt}
\renewcommand{\arraystretch}{1.75}

\begin{tabular}{c|c}
i & $a_{4,i}(p)$\\
\hline
1  & $-\frac{1}{172800 f^{15}}$ \\
2  & $\frac{2 f^5-75 f^4+994 f^3-5350 f^2+8461 f-2604}{14515200 f^{15}}$ \\
3  & $-\frac{2582 f^6-42087 f^5+243480 f^4-584534 f^3+616185 f^2-314250 f+45360}{43545600 f^{15}}$ \\
4  & $\frac{88290 f^7-910787 f^6+3434955 f^5-6337605 f^4+6666425 f^3-3968880 f^2+1197450 f-98280}{43545600 f^{15}}$ \\
5  & $-\frac{1403015 f^8-10868010 f^7+34735692 f^6-63129674 f^5+70900605 f^4-49330860 f^3+20133240 f^2-3982608 f+181440}{87091200 f^{15}}$ \\
6  & $\frac{3509560 f^9-25035695 f^8+83179010 f^7-163178700 f^6+202937816 f^5-163942655 f^4+83098100 f^3-24287100 f^2+3040776 f-60480}{87091200 f^{15}}$ \\
7  & $\frac{-1527084 f^9+13390590 f^8-52041408 f^7+115753075 f^6-164355872 f^5+152803338 f^4-92842411 f^3+34605080 f^2-6972948 f+437688}{43545600 f^{14}}$ \\
8  & $\frac{218844 f^9-3486474 f^8+17302540 f^7-46417988 f^6+76401669 f^5-82177936 f^4+57873320 f^3-26020704 f^2+6695295 f-763542}{21772800 f^{13}}$ \\
9  & $\frac{-60480 f^9+3040776 f^8-24287100 f^7+83108110 f^6-163942655 f^5+202937816 f^4-163188710 f^3+83179010 f^2-25035695 f+3509560}{87091200 f^{12}}$ \\
10  & $\frac{-181440 f^{15}+3982608 f^{14}-20127234 f^{13}+49330860 f^{12}-70900605 f^{11}+63123668 f^{10}-34735692 f^9+10868010 f^8-1403015 f^7}{87091200 f^{18}}$ \\
11  & $-\frac{98280 f^{15}-1198815 f^{14}+3968880 f^{13}-6666425 f^{12}+6338970 f^{11}-3434955 f^{10}+910787 f^9-88290 f^8}{43545600 f^{18}}$ \\
12  & $-\frac{44905 f^{15}-314250 f^{14}+616185 f^{13}-584079 f^{12}+243480 f^{11}-42087 f^{10}+2582 f^9}{43545600 f^{18}}$ \\
13  & $\frac{35 f^{16}-2604 f^{15}+8461 f^{14}-5385 f^{13}+994 f^{12}-75 f^{11}+2 f^{10}}{14515200 f^{18}}$ \\
14 & $\frac{5 f^{17}-84 f^{15}-5 f^{14}}{14515200 f^{18}}$ \\
15  & $\frac{f^{18}-f^{15}}{43545600 f^{18}}$
\end{tabular} 

%%%%%%%%%%%%%%%%%%%%%%%%%%%%%%%%%%%%%%%%%%%%%%%%%%%%%%%%%%%%%%%%%
\subsection{Hurwitz Theory} \label{sec:appendix_Hur_d}
%%%%%%%%%%%%%%%%%%%%%%%%%%%%%%%%%%%%%%%%%%%%%%%%%%%%%%%%%%%%%%%%%

%%%%%%%%%%%%%%%%%%%%%%%%%%%%%%%%%%%%%%%%%%%%%%%%%%%%%%%%%%%%%%%%%
\subsubsection*{GW Invariants}
%%%%%%%%%%%%%%%%%%%%%%%%%%%%%%%%%%%%%%%%%%%%%%%%%%%%%%%%%%%%%%%%%

\hspace{-2cm}
\begin{minipage}{\textwidth}
\centering
\includegraphics[]{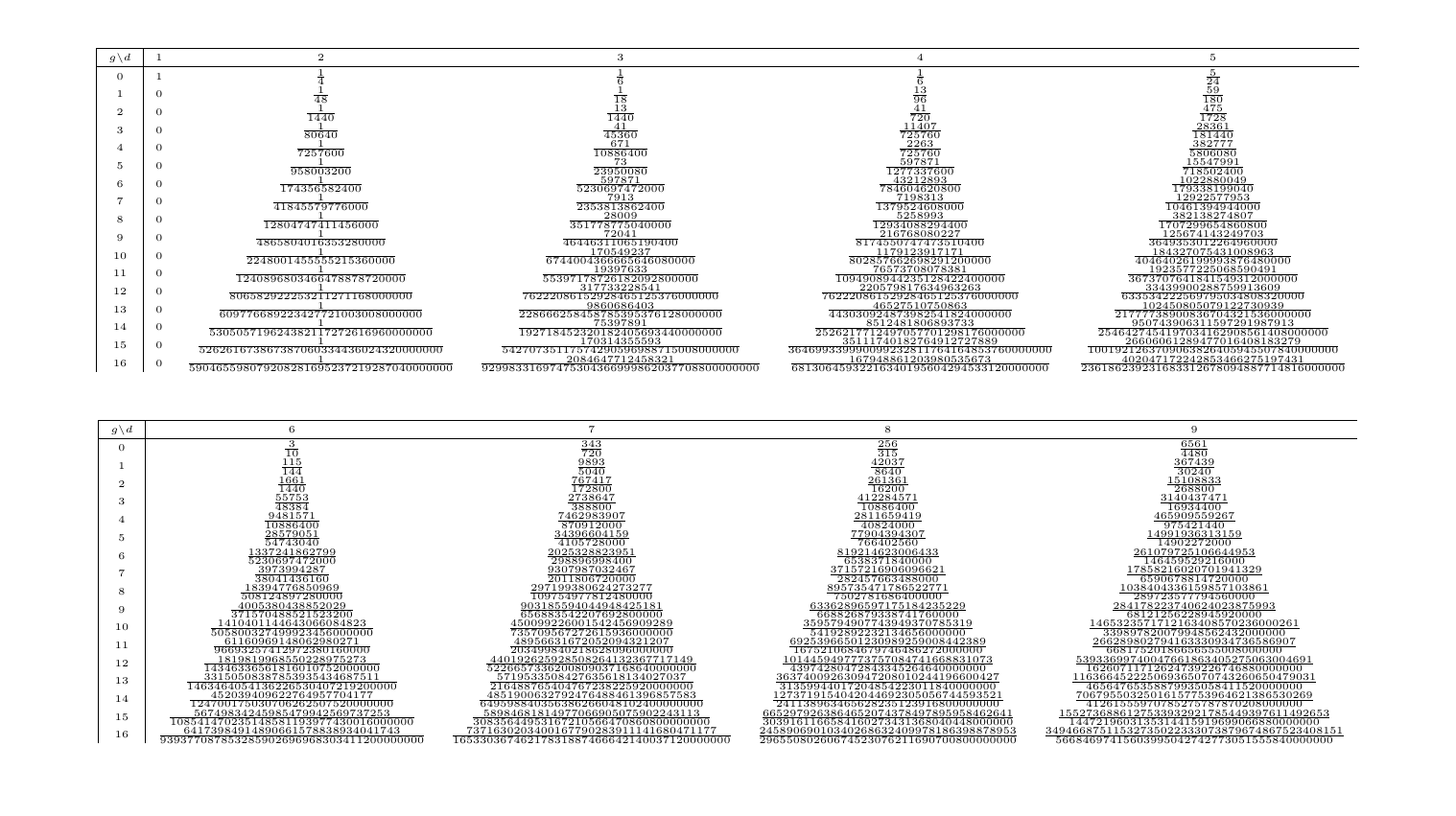}
\end{minipage}

%%%%%%%%%%%%%%%%%%%%%%%%%%%%%%%%%%%%%%%%%%%%%%%%%%%%%%%%%%%%%%%%%
\subsubsection*{Large-Degree Expansion Coefficients $\widehat{c}_{j_0}^{\tH,(j)}$}
%%%%%%%%%%%%%%%%%%%%%%%%%%%%%%%%%%%%%%%%%%%%%%%%%%%%%%%%%%%%%%%%%

\begin{table}[ht]
\centering
\renewcommand{\arraystretch}{1.25}
\begin{tabular}{c|ccccccc}
\begin{picture}(20,20)(0,0)
\put(15,12){$j_0$}
\put(3.6,18){\line(1,-1){22.3}}
\put(5,0){$j$}
\end{picture}
& 1 & 2 & 3 & 4 & 5 & 6 & 7\\
\hline
1  & $\frac{\sqrt{2}}{3}$ & & & & & & \\
2  & $\frac{1}{18}$ & $\frac{1}{9}$ & & & & & \\
3  & $-\frac{1}{405 \sqrt{2}}$ & $\frac{1}{27 \sqrt{2}}$ & $\frac{\sqrt{2}}{81}$ & & & & \\
4  & $-\frac{7}{1620}$ & $-\frac{7}{1620}$ & $\frac{1}{162}$ & $\frac{1}{486}$ & & & \\
5  & $-\frac{5}{2268 \sqrt{2}}$ & $-\frac{11}{3645 \sqrt{2}}$ & $\frac{11}{14580 \sqrt{2}}$ & $\frac{1}{729 \sqrt{2}}$ & $\frac{1}{3645 \sqrt{2}}$ & & \\
6  & $-\frac{88}{382725}$ & $-\frac{8941}{9185400}$ & $-\frac{29}{58320}$ & $\frac{37}{262440}$ & $\frac{1}{8748}$ & $\frac{1}{65610}$ & \\
7  & $\frac{101}{1020600\sqrt{2}}$ & $-\frac{487}{1837080\sqrt{2}}$ & $-\frac{11237}{27556200\sqrt{2}}$ & $-\frac{1}{9720\sqrt{2}}$ & $\frac{13}{393660\sqrt{2}}$ & $\frac{1}{65610\sqrt{2}}$ & $\frac{1}{688905\sqrt{2}}$\\
\end{tabular}
%\caption{Coefficients $\widehat{c}_{j_0}^{\tH,(j)}$ of the large-degree expansion of GW invariants in Hurwitz theory.}
\label{tableHur_larged}
\end{table}

%%%%%%%%%%%%%%%%%%%%%%%%%%%%%%%%%%%%%%%%%%%%%%%%%%%%%%%%%%%%%%%%%
\subsection{Quintic}\label{sec:appendix:quintic}
%%%%%%%%%%%%%%%%%%%%%%%%%%%%%%%%%%%%%%%%%%%%%%%%%%%%%%%%%%%%%%%%%

%%%%%%%%%%%%%%%%%%%%%%%%%%%%%%%%%%%%%%%%%%%%%%%%%%%%%%%%%%%%%%%%%
\subsubsection*{GW Invariants}
%%%%%%%%%%%%%%%%%%%%%%%%%%%%%%%%%%%%%%%%%%%%%%%%%%%%%%%%%%%%%%%%%

\hspace{-2cm}
    \begin{minipage}{\textwidth}
       \centering
      \includegraphics[]{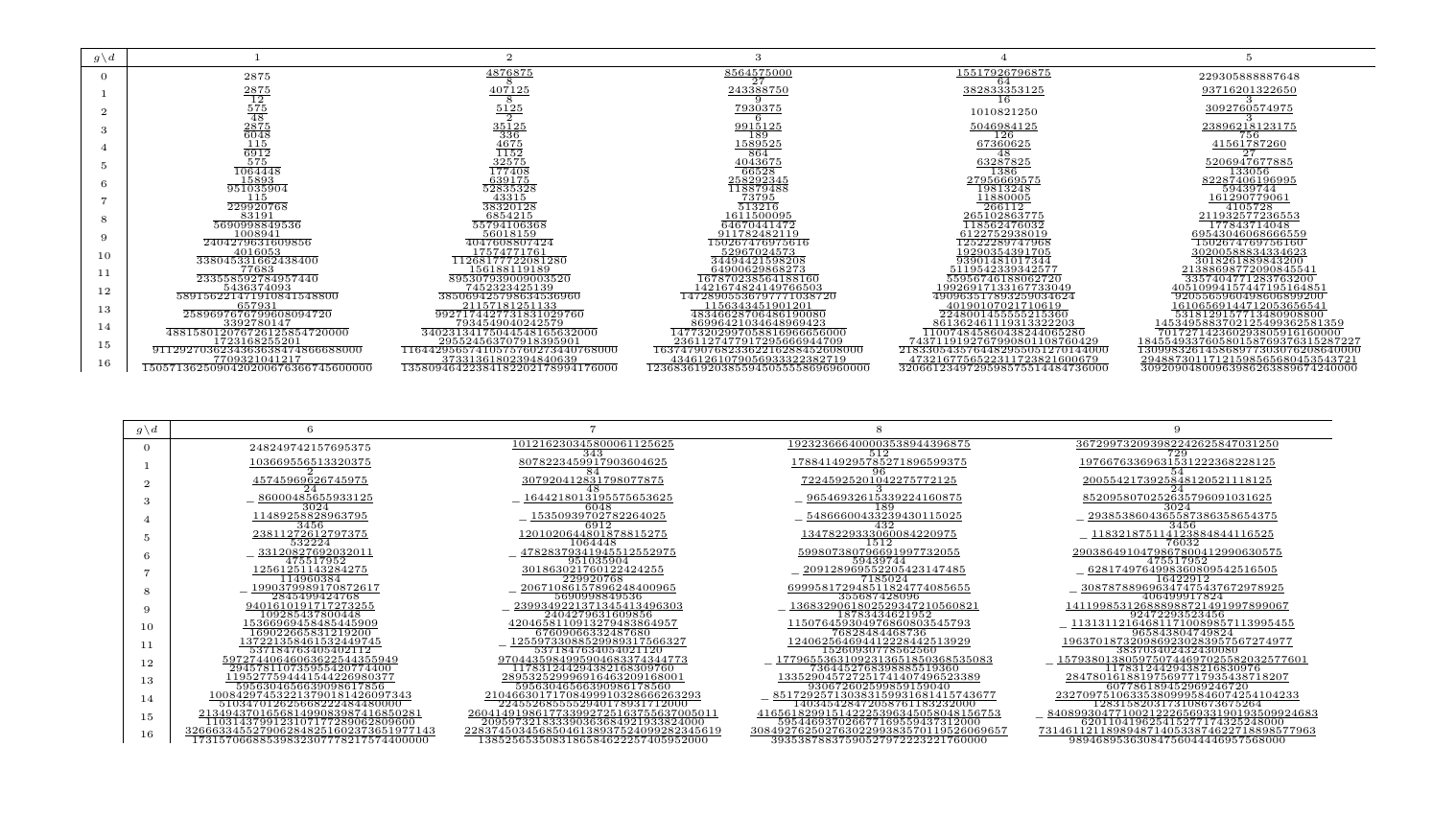}
    \end{minipage}

%%%%%%%%%%%%%%%%%%%%%%%%%%%%%%%%%%%%%%%%%%%%%%%%%%%%%%%%%%%%%%%%%
\subsubsection*{$abc$-Coefficients}
%%%%%%%%%%%%%%%%%%%%%%%%%%%%%%%%%%%%%%%%%%%%%%%%%%%%%%%%%%%%%%%%%

    \begin{minipage}{\textwidth}
    \hspace{-2cm}
      \begin{center}
      \raisebox{2cm}{\includegraphics[]{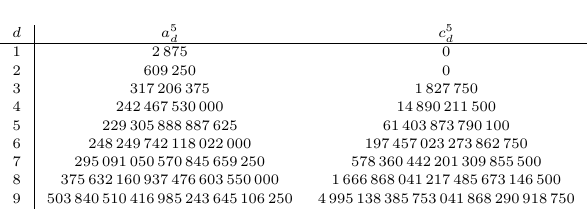}} \quad \quad
      \\
      \includegraphics[]{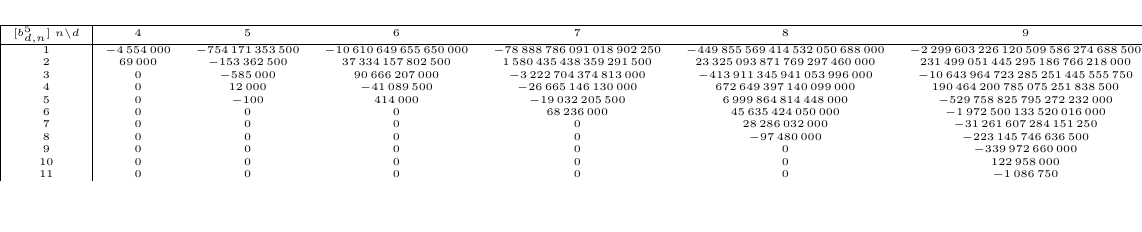}
      \end{center}
    \end{minipage}
    
\end{landscape}

%%%%%%%%%%%%%%%%%%%%%%%%%%%%%%%%%%%%%%%%%%%%%%%%%%%%%%%%%%%%%%%%%
%%%%%%%%%%%%%%%%%%%%%%%%%%%%%%%%%%%%%%%%%%%%%%%%%%%%%%%%%%%%%%%%%
\newpage
%%%%%%%%%%%%%%%%%%%%%%%%%%%%%%%%%%%%%%%%%%%%%%%%%%%%%%%%%%%%%%%%%
%%%%%%%%%%%%%%%%%%%%%%%%%%%%%%%%%%%%%%%%%%%%%%%%%%%%%%%%%%%%%%%%%

\bibliographystyle{plain}
%\bibliography{papers}

\end{document}